\tikzstyle arrowstyle=[scale=1]
\tikzstyle directed=[postaction={decorate,decoration={markings,mark=at position .65 with {\arrow[arrowstyle]{stealth}}}}]
\tikzstyle reverse directed=[postaction={decorate,decoration={markings,mark=at position .65 with {\arrowreversed[arrowstyle]{stealth};}}}]
\newtheorem{theorem}{Theorem} 
\newtheorem{remark}{Remark}
\newtheorem{lemma}{Lemma}
\newtheorem{assumption}{Assumption}
\newcommand{\Rmnum}[1]{\expandafter\@slowromancap\romannumeral #1@}
\newcommand{\R}{\mathbb{R}}
\newcommand{\C}{\mathbb{C}}
\newcommand{\bA}{\boldsymbol{A}} 
\newcommand{\bB}{\boldsymbol{B}} 
\newcommand{\bH}{\boldsymbol{H}}
\newcommand{\bV}{\boldsymbol{V}}  
\newcommand{\bE}{\boldsymbol{E}}  
\newcommand{\bn}{\boldsymbol{n}}
\newcommand{\bt}{\boldsymbol{t}}  
\newcommand{\cT}{\mathcal{T}}
\newcommand{\cE}{\mathcal{E}}  
\begin{document}

\title{An efficient iterative method for dynamical Ginzburg-Landau equations}

\author{Qingguo Hong}
\address{Department of Mathematics, Pennsylvania State University, University Park, PA, 16802, USA. huq11@psu.edu} 
\author{Limin Ma}
\address{Department of Applied Mathematics, The Hong Kong Polytechnic University, Hung Hom, Hong Kong. maliminpku@gmail.com}

\author{Jinchao Xu }
\address{Department of Mathematics, Pennsylvania State University, University Park, PA, 16802, USA. xu@math.psu.edu} 

\thanks{The authors gratefully acknowledge the support by the Computational Materials Sciences Program funded by the U.S. Department of Energy, Office of Science, Basic Energy Sciences, under Award No. DE-SC0020145.}

\maketitle

\begin{abstract}
In this paper, we propose a new finite element approach to simulate the time dependent  Ginzburg-Landau equations under the temporal gauge, and design an efficient preconditioner for the Newton iteration of the resulting discrete system. The new approach solves the magnetic  potential in $H({\rm curl})$ space by the lowest order of the  second kind ${\rm N\acute{e}d\acute{e}lec}$ element. 
This approach offers a simple way to deal with the boundary condition, and leads to a stable and reliable performance when dealing the superconductor with reentrant corners. 
The comparison in numerical simulations verifies the efficiency of the proposed preconditioner, which can significantly speed up the simulation in large scale computations. 

\vskip 15pt
\noindent{\bf Keywords. }{ Gizburg-Landau, ${\rm N\acute{e}d\acute{e}lec}$ element, Preconditioner, Superconductivity} 

 \vskip 15pt
\end{abstract}

\section{Introduction}
\label{intro}  

The Ginzburg-Landu theory of superconductivity \cite{ginzburg1950theory} describes the transient behavior and vortex motions of superconductors in an external magnetic field. The time dependent Ginzburg-Landau (TDGL) equations  are widely used in the simulations, where the nondimensionalization form is
\begin{equation}\label{model0}
\left\{
\begin{aligned} 
\left(\partial_{t}+i\kappa  \phi\right) \psi
+ \left(\frac{i}{\kappa} \nabla + \boldsymbol{A}\right)^{2} \psi
+ (|\psi|^{2} -1) \psi &= 0& \quad \mbox{in }\ \Omega\times (0,T],\\
\sigma\left(\nabla \phi + \partial_{t} \bA\right)
+ \nabla \times(\nabla \times \boldsymbol{A})
+ Re\left[  \psi^*({i\over \kappa}\nabla + \bA)\psi  \right]
&=\nabla\times \bH& \quad \mbox{in }\ \Omega\times (0,T],
\end{aligned}
\right.
\end{equation}
with the boundary and initial conditions
\begin{equation}\label{model0bc}
\left\{
\begin{aligned} 
(\nabla \times \boldsymbol{A})\times \boldsymbol{n}&=\boldsymbol{H}\times \boldsymbol{n},
&\quad (\frac{i}{\kappa} \nabla + \boldsymbol{A}) \psi\cdot \boldsymbol{n} &=0&\text { on } \partial \Omega,
\\
\psi(x,0) &=\psi_0(x),&\quad \bA(x,0)&=\bA_0(x)&\mbox{ on } \Omega.
\end{aligned}
\right.
\end{equation}
Here $\Omega$ is a bounded domain in $\mathbb{R}^d$ $(d=2, 3)$, the order parameter $\psi$ is a complex scalar function which describes the macroscopic state of the superconductor, $\phi$ is a real scalar-valued electric potential, $\bA$ is a real vector-valued magnetic potential and the real vector-valued function $\bH$ is the external magnetic field. Variables of physical interest in this model are the superconducting density $|\psi|^2$, the magnetic induction field $\bB=\nabla\times \bA$ and the electric field $\bE=\partial_t \bA+\nabla\phi$. The total current $\boldsymbol{J}=\nabla\times \bB$, and the supercurrent
\begin{equation}\label{jsdef}
\boldsymbol{j}_s=\frac{1}{2 i \kappa}(\psi^* \nabla \psi-\psi \nabla \psi^*)-|\psi|^{2} \boldsymbol{A} = -Re\left[  \psi^*({i\over \kappa}\nabla + \bA)\psi  \right].
\end{equation}
In the nondimensionalization form \eqref{model0}, the magnitude of the order parameter $|\psi|$ is between $0$ and $1$, where $|\psi|=0$ corresponds to the normal state, $|\psi|=1$ corresponds to the superconducting state, and $0<|\psi|<1$ corresponds to some intermediate state.

The solution of the nondimensionalization model \eqref{model0} is not unique. Given any solution $(\psi, \bA, \phi)$, a gauge transformation
\begin{equation}
G_\chi (\psi, \bA, \phi)=(\psi e^{i\kappa \chi}, \bA + \nabla \chi, \phi - \partial_t \chi)
\end{equation}
gives a class of equivalent solutions, in the sense that the physical variables are invariant under gauge transformation, say superconducting density $|\psi|$, magnetic induction $\bB$ and electric field~$\bE$. 
Mathematically speaking, the solutions of \eqref{model0} under different gauges are theoretically equivalent. But numerical schemes under different gauges are computationally different. The dependence of the system on the electric potential is eliminated via a gauge transformation.  There are several widely used gauges, including the Lorentz gauge $\phi=-\nabla\cdot A$ and the temporal gauge $\phi=0$ which is considered in this paper. The equations for $\psi$ and $\bA$ are uniformly parabolic under the Lorentz gauge, some analysis was presented in \cite{chen1993non,li2017global} requiring some strong regularity of the solution and the smoothness of the domain. Many numerical methods were produced and studied in literature, see \cite{chen1997mixed,gao2014optimal,du2005numerical} and the reference therein. Some mixed element methods were proposed for the Lorentz gauge to get rid of the spurious vortex pattern by conventional methods, see \cite{gao2015efficient,gao2016new,gao2017efficient,li2017mathematical,li2020hodge,chen1997mixed}.

The TDGL equations under the temporal gauge gain more interest in the physical and engineering community \cite{alstrom2011magnetic,gropp1996numerical,mu1998alternating,richardson2004numerical,vodolazov2003vortex,winiecki2002fast,gao2019stabilized}. The nondimensionalization system under the temporal gauge solves 
\begin{equation}\label{model0temporal}
\left\{
\begin{aligned} 
\partial_{t}\psi
+ \left(\frac{i}{\kappa} \nabla + \boldsymbol{A}\right)^{2} \psi
+ (|\psi|^{2} -1) \psi &= 0& \quad \mbox{in }\ \Omega\times (0,T],
\\
\sigma \partial_{t} \bA
+ \nabla \times(\nabla \times \boldsymbol{A})
+ Re\left[  \psi^*({i\over \kappa}\nabla + \bA)\psi  \right]
&=\nabla\times \bH& \quad \mbox{in }\ \Omega\times (0,T],
\end{aligned}
\right.
\end{equation}
with the boundary and initial conditions \eqref{model0bc}. 
The system under temporal gauge looks simpler than that under Lorentz gauge, but the equation involving the magnetic potential $\boldsymbol{A}$ is no longer coercive in $H^1(\Omega)$, which in turn leads to some difficulties in designing numerically convergent schemes for the TDGL equations.  The regularity of the solutions  of the Ginzburg-Landau equations under temporal gauge was analyzed in \cite{du1992analysis,yang1989existence} on smooth domain. 
Some finite element schemes and mixed element schemes of this problem in $H^1(\Omega)$ with an additional boundary condition $\bA\cdot \bn|_{\partial \Omega}=0$ were proposed and analyzed in \cite{gao2016new,chen2001adaptive,mu1997linearized,yang2014convergence,mu1998alternating,du1994finite,wu2018analysis} and the references therein.
In a domain with reentrant corners, well-posedness of the TDGL equations and convergence of the numerical solutions are still open. 

In this paper, we propose a new nonlinear approach to solve the TDGL equations in $H({\rm curl},\Omega, \R^d)\times H^1(\Omega,\C)$ and also an efficient preconditioner for the Newton iteration solving the nonlinear system. The conventional finite element scheme with discrete approximation $\bA_h\in H^1(\Omega, \R^d)$ may lead to unstable or spurious numerical phenomenon when the regularity of solution is low, and the construction of the discrete space is not easy to implement due to the additional boundary condition. The proposed approach is more stable in this case as showed in numerical tests, and the boundary condition will not be an issue. 
The proposed scheme is a nonlinear system, which couples  two variables. The nonlinearity offers the advantage to analyze the energy decaying property of the numerical solution. The Newton method is applied to solve the nonlinear system and a preconditioner is proposed for the linearized system, where the efficiency of this preconditioner is verified by numerical tests. This efficient preconditioner plays an  important role in speeding up the simulation and makes the computational cost of this nonlinear system comparable to that of a linear system.

%

The remaining paper is organized as follows. Later in this section, some notations are introduced. Section \ref{sec:TDGL} proposes a new approach to solve the TDGL equations under the temporal gauge. Section~\ref{sec:Newton} proposes an efficient preconditioner for the Newton iteration of the nonlinear discrete system. Section \ref{sec:numerical} presents an artificial problem with exact solution to test the accuracy of the numerical scheme and some numerical examples of vortex simulations on different domains.

\section{A new approach for time dependent Ginzburg-Landau equation}\label{sec:TDGL}


Given a spatial finite element mesh $\cT_h$, let $P_r(K, \R^d)$ be the space of all polynomials of degree not greater than $r$ on any element $K$ of $\cT_h$.
Define the discrete space of the lowest order of the second kind ${\rm N\acute{e}d\acute{e}lec}$ element by 
\begin{equation}\label{spaceQ}
Q_h=\{\tilde \bA_h\in H({\rm curl}, \Omega, \R^d): \tilde \bA_h|_K \in P_1(K,\R^d), \ \int_e \tilde \bA_h\cdot \bt_e\,ds \ \mbox{ is continuous on any }\ e\in \cE_h\},
\end{equation}
and  the discrete space of the conforming linear element by $\bV_h$ 
\begin{equation}\label{spaceV}
\bV_h=\{\tilde \psi_h\in H^1(\Omega, \C): \tilde \psi_h|_K \in P_1(K,\C), \  \tilde \psi_h \ \mbox{ is continuous on any }\ e\in \cE_h\},
\end{equation}
where $\bt_e$ is the tangential direction of the edge $e$, 
$
H^1(\Omega,\mathbb{C})=\{u+iv: u, v\in H^1(\Omega,\mathbb{R})\}
$
is the Sobolev spaces for complex-valued functions, and 
$
H({\rm curl},\Omega, \mathbb{R}^d)=\{\tilde \bA: \tilde \bA\in L^2(\Omega,\R^d), \nabla\times \tilde \bA\in L^2(\Omega,\R^d)\}.
$
Denote the inner product in $L^2(G,\mathbb{C}^d)$ by 
$
(u, v)=\int_G u\cdot v^*\,dx, 
$
where $v^*$ is the conjugate of the complex function $v\in L^2(G,\mathbb{C}^d)$.


Consider the temporal gauged TDGL equations~\eqref{model0temporal} with boundary condition~\eqref{model0bc}  
\begin{equation}
\left\{
\begin{aligned} 
\partial_{t}\psi
+ \left(\frac{i}{\kappa} \nabla + \boldsymbol{A}\right)^{2} \psi
+ (|\psi|^{2} -1) \psi &= 0& \quad \mbox{in }\ \Omega\times (0,T],
\\
\sigma \partial_{t} \bA
+ \nabla \times(\nabla \times \boldsymbol{A})
+ Re\left[  \psi^*({i\over \kappa}\nabla + \bA)\psi  \right]
&=\nabla\times \bH& \quad \mbox{in }\ \Omega\times (0,T],
\\
(\nabla \times \boldsymbol{A})\times \boldsymbol{n}&=\boldsymbol{H}\times \boldsymbol{n} &\text { on } \partial \Omega\times (0,T],
\\
(\frac{i}{\kappa} \nabla + \boldsymbol{A}) \psi\cdot \boldsymbol{n} &=0&\text { on } \partial \Omega\times (0,T].
\end{aligned}
\right.
\end{equation}
Multiply the two equations in \eqref{model0temporal} by any $\tilde \bA\in H({\rm curl}, \Omega, \R^d)$ and $\tilde \psi\in H^1(\Omega, \C)$, respectively, and take the integration on the domain. With the Neumann boundary condition  \eqref{model0bc}, the weak formulation of  \eqref{model0temporal} solves $(\bA, \psi)\in H({\rm curl}, \Omega, \R^d)\times H^1(\Omega, \C)$ such that
\begin{equation}\label{weakGL}
\begin{aligned} 
(\partial_t\psi, \tilde \psi) + ({i\over \kappa}\nabla \psi + \boldsymbol{A}\psi, {i\over \kappa} \nabla \tilde \psi + \boldsymbol{A}\tilde\psi) 
+ ( (|\psi|^2-1)\psi,\tilde \psi)&=0,
\\
(\sigma\partial_t\bA, \tilde \bA) + (\nabla\times \boldsymbol{A}, \nabla \times \tilde{\boldsymbol{A}}) + (|\psi|^2\boldsymbol{A}, \tilde{\boldsymbol{A}}) 
+ ({i\over 2\kappa}(\psi^*\nabla \psi - \psi\nabla\psi^*), \tilde{\boldsymbol{A}})&=(\bH,  \nabla\times\tilde{\boldsymbol{A}}).
\end{aligned}
\end{equation} 
This implies that if the solution  $(\bA, \psi)$ of  the TDGL equations \eqref{model0temporal} belongs to the space $H({\rm curl}, \Omega, \R^d)\times H^1(\Omega, \C)$, the solution satisfies \eqref{weakGL} for any $(\tilde \bA, \tilde \psi)\in H({\rm curl}, \Omega, \R^d)\times H^1(\Omega, \C)$. 

The semi-discrete scheme for the TDGL equations \eqref{model0temporal} seeks
$(\bA_h,\psi_h)\in Q_h\times \bV_h$ such that 
\begin{equation} \label{discreteGL}
\left\{
\begin{aligned} 
(\partial_t\bA_h, \tilde \bA_h) + F_A(\bA_h, \psi_h; \tilde \bA_h, \tilde \psi_h) &= 0,
\\
(\sigma\partial_t\psi_h, \tilde \psi_h) + F_\psi(\bA_h, \psi_h;\tilde \bA_h, \tilde \psi_h)&=0,
\end{aligned}
\right.
\quad \forall (\tilde \bA_h,\tilde \psi_h)\in Q_h\times \bV_h,
\end{equation} 
where $Q_h\subset H({\rm curl}, \Omega, \R^d)$ is the real-valued space defined in \eqref{spaceQ}, $\bV_h\subset H^1(\Omega, \C)$ is the complex-valued space defined in \eqref{spaceV}, and
\begin{equation}\label{Fdef}
\begin{aligned}
F_A(\bA_h, \psi_h; \tilde \bA_h, \tilde \psi_h) &= (\nabla\times \boldsymbol{A}_h, \nabla \times \tilde{\boldsymbol{A}}_h) 
+ (|\psi_h|^2\boldsymbol{A}_h, \tilde{\boldsymbol{A}}_h) 
+ ({i\over 2\kappa}(\psi_h^*\nabla \psi_h - \psi_h\nabla\psi_h^*), \tilde{\boldsymbol{A}}_h)
- (\bH,  \nabla\times\tilde{\boldsymbol{A}}_h),
\\
F_\psi(\bA_h, \psi_h; \tilde \bA_h, \tilde \psi_h) &=  ({i\over \kappa}\nabla \psi_h + \bA_h\psi_h, {i\over \kappa} \nabla \tilde \psi_h + \bA_h\tilde\psi_h) 
+ ( (|\psi_h|^2-1)\psi_h,\tilde \psi_h).
\end{aligned}
\end{equation}

Note that the conventional finite element solves the magnetic potential $\bA_h$ of  \eqref{weakGL} in  a smaller space
$$
Q_h^o=\{\tilde \bA_h\in H^1(\Omega, \R^d): \tilde \bA_h|_K\in P_1(K,\R^d),\ \tilde \bA_h\cdot \bn =0\},
$$
which is a subspace of $H^1(\Omega,\mathbb{R}^d)$. The additional constraint $\tilde \bA_h\cdot \bn =0$ adds the difficulty in the construction of the discrete space $Q_h^o$. Many mixed formulations and some methods based on Hodge decomposition, which introduce some extra variables, are proposed in literature to avoid the difficulty.  Compared to the conventional finite element method, the semi-discrete scheme \eqref{discreteGL} seeks $\bA_h$ in a $H({\rm curl})$-conforming finite element space and the resulting formulation is easy to implement with no difficulty in the construction of the discrete space. On the other hand, the semi-discrete scheme \eqref{discreteGL} requires weaker regularity on the solution than the conventional method which seeks the solution in a smaller space 
$$
H^1_{\bn}(\Omega,\mathbb{R})=\{\bA\in H^1(\Omega,\mathbb{R}): \bA\cdot \bn|_{\partial \Omega}=0\}.
$$ 
If the solution of the TDGL equations \eqref{model0temporal} is smooth enough and in $H^1_{\bn}(\Omega, \R^d)$, then it also satisfies the weak formulation \eqref{weakGL}.
However, the superconductor can be not convex and the solution  not smooth enough. For this case, the solution of the scheme \eqref{discreteGL} can behave better than an approximate solution in a discrete subspace of $H^1_{\bn}(\Omega, \R^d)$.

%

Let 
$
0<t_0<t_1<\cdots<t_N=T
$
be a uniform partition of the time interval with step size $\triangle t=\frac{T}{N}$. By applying the backward Euler method for time discretization, we propose a new approach for the TDGL equations \eqref{model0temporal}, which seeks $(\bA^{n+1}_h,\psi^{n+1}_h)\in Q_h\times \bV_h$ such that 
\begin{equation} \label{Eulerh2}
\left\{
\begin{aligned} 
({\psi^{n+1}_h-\psi^n_h\over \triangle t}, \tilde \psi_h) 
+ F_\psi(\bA^{n+1}_h,\psi^{n+1}_h;\tilde \bA_h, \tilde \psi_h)&= 0,
\\
(\sigma { \bA^{n+1}_h-\bA^n_h\over \triangle t},{\tilde \bA}_h) 
+ F_A(\bA^{n+1}_h,\psi^{n+1}_h; \tilde \bA_h, \tilde \psi_h)&=0,
\end{aligned}
\right.
\quad \forall (\tilde \bA_h,\tilde \psi_h)\in Q_h\times \bV_h,
\end{equation} 
and $(\bA^{0}_h,\psi^{0}_h)$ are the projections of $\bA_0$ and $\psi_0$ into $Q_h$ and $\bV_h$, respectively, namely
\begin{equation}\label{initialpro}
\begin{aligned}
(\nabla\times \bA^{0}_h,\nabla\times \tilde \bA_h) + (\bA^{0}_h, \tilde\bA_h)
&=(\nabla\times \bA^{0},\nabla\times \tilde \bA_h) + (\bA^{0}, \tilde\bA_h), 
&\quad \forall \tilde \bA_h\in Q_h,
\\
(\nabla  \psi^{0}_h,\nabla\times \tilde \psi_h) + (\psi^{0}_h, \tilde\psi_h)
&=(\nabla\times \psi^{0},\nabla\times \tilde \psi_h) + (\psi^{0}, \tilde\psi_h), 
&\quad \forall \tilde \psi_h\in \bV_h.
\end{aligned}
\end{equation}

For any $\triangle t>0$ and $n\ge 0$, define
$$
\mathcal{J}_h^{n+1}(\bB_h, \xi_h) = \mathcal{G}(\bB_h, \xi_h) + \frac{1}{\triangle t}(\|\xi_h - \psi_h^n\|_0^2 + \sigma\|\bB_h - \bA_h^n\|_0^2),\quad \forall (\bB_h, \xi_h)\in Q_h\times \bV_h,
$$
where the Gibbs free energy $\mathcal{G}(\bB, \xi)$ is defined by 
\begin{equation}\label{energydef}
\mathcal{G}(\bB, \xi)=\left\| ({i\over \kappa}\nabla  + \boldsymbol{B})\psi\right\|^2_0 + \frac12 \||\xi|^2-1\|^2_0 + \|\nabla\times\bB - \bH\|_0^2.
\end{equation} 
The semi-discrete scheme \eqref{weakGL} is a gradient flow of this free energy functional $\mathcal{G}(\bA, \psi)$, that is 
\begin{equation} 
\begin{aligned} 
(\partial_t\psi, \tilde \psi) = -( \mathcal{G}_{\psi}(\bA, \psi),\tilde \psi)&,\quad
(\sigma \partial_t\bA,\tilde \bA)  = -( \mathcal{G}_{\bA}(\bA, \psi),\tilde \bA),
\end{aligned}
\end{equation} 
where $\mathcal{G}_{\psi}$ and $\mathcal{G}_{\bA}$ are the Frech$\rm \acute{e}$t derivatives of $\mathcal{G}(\bA, \psi)$.

Note that the functional $\mathcal{J}_h^{n+1}(\cdot,\cdot)$ is nonnegative and has at least one minimum. For any $n\ge 0$, let $(\bar\bA^{n+1}_h, \bar\psi^{n+1}_h)\in Q_h\times\bV_h$ be a minimum of $\mathcal{J}_h^{n+1}(\cdot,\cdot)$ satisfying
$$
\mathcal{J}_h^{n+1}(\bar\bA^{n+1}_h, \bar\psi^{n+1}_h)=\min_{(\bB_h,\xi_h)\in Q_h\times\bV_h } \mathcal{J}_h^{n+1}(\bB_h,\xi_h).
$$
Note that $(\bar\bA^{n+1}_h, \bar\psi^{n+1}_h)$ is a solution of \eqref{Eulerh2} and
\begin{equation}\label{modifyenergydecay}
\mathcal{J}_h^{n+1}(\bar\bA_h^{n+1}, \bar\psi_h^{n+1}) \le \mathcal{J}_h^{n+1}(\bA_h^n, \psi_h^n) = \mathcal{G}(\bA_h^n, \psi_h^n).
\end{equation} 
This indicates that there exists at least one solution of the nonlinear scheme \eqref{Eulerh2}.
A similar  analysis  to the one in \cite{du1994finite} shows that the functional $\mathcal{J}_h^{n+1}(\cdot,\cdot)$ is convex in the set 
$$
\mathcal{M}=\left\{(\bA_h,\psi)\in Q_h\times \bV_h: \|\psi\|_{0,4}\le C,\ \|\bA_h\|_0\le C,\ \|({i\over \kappa}\nabla  + \boldsymbol{A}_h)\psi_h\|_0\le C\right\}
$$
with any constant $C>0$ if $\Delta t$ and $\Delta t h^{-d/2}$ are sufficiently small.  This implies that the solution of the nonlinear system \eqref{Eulerh2} is unique if $\Delta t h^{-d/2}$ are sufficiently small.

The inequality \eqref{modifyenergydecay} directly leads to the following energy decay property of a solution of the the nonlinear scheme \eqref{Eulerh2}.
\begin{theorem}\label{th:decay}
For any $\triangle t>0$ and $n\ge 0$, there exists a solution $(\bA^{n+1}_h, \psi^{n+1}_h)$ of problem \eqref{Eulerh2} such that
\[
\mathcal{G}(\bA^{n+1}_h, \psi^{n+1}_h) + \frac{1}{\triangle t}\sum_{i=0}^{n}\left(\|\psi_h^{i+1} - \psi_h^i\|_0^2 + \|\bA_h^{i+1} - \bA_h^i\|_0^2\right)\le \mathcal{G}(\bA^{0}_h, \psi^{0}_h).
\]
Especially, if $\Delta t h^{-d/2}$ are sufficiently small, the solution of the nonlinear problem \eqref{Eulerh2} is unique and admits the energy decay property above .
\end{theorem}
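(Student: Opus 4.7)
The plan is to bootstrap the one-step bound \eqref{modifyenergydecay} into the full telescoping estimate, and then argue uniqueness via the convexity of $\mathcal{J}_h^{n+1}$. Since most of the work has already been done in establishing \eqref{modifyenergydecay}, the existence half reduces to a bookkeeping argument. I would fix $n\ge 0$ and take $(\bA_h^{n+1},\psi_h^{n+1})$ to be a minimizer $(\bar\bA_h^{n+1},\bar\psi_h^{n+1})$ of $\mathcal{J}_h^{n+1}$, which exists because the functional is nonnegative and coercive on $Q_h\times\bV_h$. Unwinding the definition of $\mathcal{J}_h^{n+1}$ and using \eqref{modifyenergydecay} then gives the one-step estimate
\[
\mathcal{G}(\bA_h^{n+1},\psi_h^{n+1}) + \frac{1}{\triangle t}\left(\|\psi_h^{n+1}-\psi_h^n\|_0^2 + \sigma\|\bA_h^{n+1}-\bA_h^n\|_0^2\right) \le \mathcal{G}(\bA_h^n,\psi_h^n).
\]

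Next, I would apply this one-step inequality inductively at each level $i=0,1,\dots,n$. Summing over $i$ and telescoping the consecutive values of $\mathcal{G}$ on the right-hand side collapses everything to $\mathcal{G}(\bA_h^0,\psi_h^0)$ and produces exactly the claimed energy decay identity. This establishes existence of a solution satisfying the stated bound.

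For the uniqueness assertion under the smallness assumption on $\triangle t\,h^{-d/2}$, I would invoke the convexity argument from \cite{du1994finite}, adapted to the present $H(\mathrm{curl})$-conforming setting. Any solution of \eqref{Eulerh2} is a critical point of $\mathcal{J}_h^{n+1}$, and \eqref{modifyenergydecay} combined with the energy bound just derived forces such a critical point into the set $\mathcal{M}$ on which $\mathcal{J}_h^{n+1}$ is (strictly) convex; a strictly convex functional has at most one critical point, so every solution of \eqref{Eulerh2} coincides with the minimizer $(\bar\bA_h^{n+1},\bar\psi_h^{n+1})$ and therefore inherits the energy decay bound.

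The main obstacle is the convexity verification on $\mathcal{M}$. Concretely, the Hessian of $\mathcal{J}_h^{n+1}$ contains indefinite cross terms coming from $(|\psi_h|^2\bA_h,\tilde\bA_h)$, from the gauge-covariant gradient $(\tfrac{i}{\kappa}\nabla\psi_h+\bA_h\psi_h)$, and from the nonconvex part of $\tfrac12\||\psi_h|^2-1\|_0^2$, and these must be absorbed into the quadratic core $\tfrac{1}{\triangle t}\|\cdot\|_0^2$ together with the curl and gauge-covariant gradient squared terms. The smallness of $\triangle t\, h^{-d/2}$ enters precisely here, through discrete inverse estimates of the form $\|\cdot\|_{L^\infty}\lesssim h^{-d/2}\|\cdot\|_0$ applied to the increments $\psi_h-\psi_h^n$ and $\bA_h-\bA_h^n$, allowing the nonconvex contributions on $\mathcal{M}$ to be controlled by the strictly convex quadratic terms.
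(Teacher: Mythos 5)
Your proposal follows essentially the same route as the paper: existence via choosing a minimizer of $\mathcal{J}_h^{n+1}$ at each time level, the one-step bound \eqref{modifyenergydecay}, and telescoping; uniqueness by citing the convexity of $\mathcal{J}_h^{n+1}$ on $\mathcal{M}$ \`a la \cite{du1994finite} under the smallness condition on $\triangle t\,h^{-d/2}$. Your added remarks on where the inverse estimate $\|\cdot\|_\infty\lesssim h^{-d/2}\|\cdot\|_0$ enters the convexity verification are consistent with (and slightly more explicit than) what the paper records.
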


If the time step $\Delta t h^{-d/2}$ is not small enough, 
 the nonlinear system \eqref{Eulerh2} may have multiple solutions. 
The following theorem proves that the discrete energy is bounded for any solution of \eqref{Eulerh2} under some suitable condition.

According to \cite{chen1993non},  $|\psi|\le 1$ holds for the solution $\psi$ of problem \eqref{model0temporal} at any time $t> 0$ if the initial condition $|\psi_0|\le 1$. Although it is not easy to prove that the numerical scheme preserves this property, numerical tests in Section~\ref{sec:numerical} show that the solution of the proposed scheme \eqref{Eulerh2} satisfies $|\psi_h^n|\le 1$ numerically. Thus we can reasonably assume that the discrete order parameter $\psi_h$ is bounded as shown below.

\begin{assumption}\label{ass:bound}
There exists a positive constant $C$ such that  for any $1\le i\le N$,
\begin{equation}
\|\psi_h^i\|_\infty\le C,
\end{equation}
where  $(\bA^{i}_h, \psi^{i}_h)$ be the solution of \eqref{Eulerh2}  with $\triangle t=\frac1N$.
\end{assumption}

\begin{theorem}\label{energy:estimate}
Let $(\bA^{n+1}_h, \psi^{n+1}_h)$ be the solution of \eqref{Eulerh2}  with $\triangle t=\frac1N$ with Assumption \ref{ass:bound} holds. If $\triangle t<\min\{\frac12, {\sigma \over 4\max_{1\le i\le n} \|\psi_h^i\|_\infty^2}\}$ holds,
there exists a positive constant $\tilde C$, which is independent of $N$, such that  
\[
\mathcal{G}(\bA^{n+1}_h, \psi^{n+1}_h) + \frac{1}{\triangle t}\sum_{i=0}^{n}\left(\|\psi_h^{i+1} - \psi_h^i\|_0^2 + \sigma\|\bA_h^{i+1} - \bA_h^i\|_0^2\right)\le \tilde C\mathcal{G}(\bA^{0}_h, \psi^{0}_h),\qquad \forall n\le N.
\]
\end{theorem}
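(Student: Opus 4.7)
The plan is to derive a discrete energy identity by testing the two equations of \eqref{Eulerh2} against the time increments $\psi^{n+1}_h-\psi^n_h$ (after taking the real part of the first equation) and $\bA^{n+1}_h-\bA^n_h$, adding them, and then controlling the residual coupling terms via Assumption~\ref{ass:bound} so that a discrete Gr\"onwall argument yields a bound on $\mathcal{G}(\bA^{n+1}_h,\psi^{n+1}_h)$ that is independent of $N$.

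Write $\Pi^k_h:=(i/\kappa)\nabla\psi^k_h+\bA^k_h\psi^k_h$ and $\delta f^{n+1}:=f^{n+1}_h-f^n_h$. The curl term of $F_A$ combined with $-(\bH,\nabla\times\cdot)$ produces $\tfrac12(\|\nabla\times\bA^{n+1}_h-\bH\|_0^2-\|\nabla\times\bA^n_h-\bH\|_0^2)+\tfrac12\|\nabla\times\delta\bA^{n+1}\|_0^2$ via the polarization identity $2\mathrm{Re}(a,a-b)=\|a\|_0^2-\|b\|_0^2+\|a-b\|_0^2$. For the potential $\tfrac12\||\psi|^2-1\|_0^2$, the pointwise identity $\mathrm{Re}(\psi^{n+1}_h\overline{\delta\psi^{n+1}})=\tfrac12(|\psi^{n+1}_h|^2-|\psi^n_h|^2)+\tfrac12|\delta\psi^{n+1}|^2$ combined with the scalar identity $2(a-1)(a-b)=(a-1)^2-(b-1)^2+(a-b)^2$ extracts $\tfrac14(\||\psi^{n+1}_h|^2-1\|_0^2-\||\psi^n_h|^2-1\|_0^2)$ plus a nonnegative remainder and a residual $\tfrac12\int(|\psi^{n+1}_h|^2-1)|\delta\psi^{n+1}|^2\,dx$. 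The gauge-covariant term $\|\Pi\|_0^2$ receives contributions from both equations that together sum to $\mathrm{Re}(\Pi^{n+1}_h,\Pi^{n+1}_h-\Pi^n_h)+\mathrm{Re}(\Pi^{n+1}_h,\delta\bA^{n+1}\delta\psi^{n+1})$, polarization handles the first and leaves behind a genuine coupling term. The outcome is the identity
\[
\tfrac{1}{\triangle t}\|\delta\psi^{n+1}\|_0^2 + \tfrac{\sigma}{\triangle t}\|\delta\bA^{n+1}\|_0^2 + \tfrac12\bigl(\mathcal{G}(\bA^{n+1}_h,\psi^{n+1}_h)-\mathcal{G}(\bA^n_h,\psi^n_h)\bigr) + R_n + \mathrm{Re}(\Pi^{n+1}_h,\delta\bA^{n+1}\delta\psi^{n+1}) + \tfrac12\int(|\psi^{n+1}_h|^2-1)|\delta\psi^{n+1}|^2\,dx = 0,
\]
where $R_n\ge 0$ collects $\tfrac12\|\Pi^{n+1}_h-\Pi^n_h\|_0^2$, $\tfrac12\|\nabla\times\delta\bA^{n+1}\|_0^2$, and $\tfrac14\||\psi^{n+1}_h|^2-|\psi^n_h|^2\|_0^2$.

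The main obstacle is the coupling $\mathrm{Re}(\Pi^{n+1}_h,\delta\bA^{n+1}\delta\psi^{n+1})$, for which Assumption~\ref{ass:bound} gives $\|\delta\bA^{n+1}\delta\psi^{n+1}\|_0\le 2C\|\delta\bA^{n+1}\|_0$. A direct Young's inequality of the form $\epsilon\|\Pi^{n+1}_h\|_0^2+C_\epsilon\|\delta\bA^{n+1}\|_0^2$ places a prefactor of $\mathcal{G}^{n+1}$ on the right and only produces the geometric bound $\mathcal{G}^{n+1}\le 2\mathcal{G}^n$, compounding to the non-uniform $2^N\mathcal{G}^0$. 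To circumvent this I would split $\Pi^{n+1}_h=\Pi^n_h+(\Pi^{n+1}_h-\Pi^n_h)$. The increment piece $\mathrm{Re}(\Pi^{n+1}_h-\Pi^n_h,\delta\bA^{n+1}\delta\psi^{n+1})$ is bounded by $\tfrac12\|\Pi^{n+1}_h-\Pi^n_h\|_0^2+2C^2\|\delta\bA^{n+1}\|_0^2$ and fully absorbed into $R_n$ and $\tfrac{\sigma}{\triangle t}\|\delta\bA^{n+1}\|_0^2$ under $\triangle t<\sigma/(4C^2)$. The other piece $\mathrm{Re}(\Pi^n_h,\delta\bA^{n+1}\delta\psi^{n+1})$ is bounded by $\tfrac{2C^2\triangle t}{\sigma}\|\Pi^n_h\|_0^2+\tfrac{\sigma}{2\triangle t}\|\delta\bA^{n+1}\|_0^2$; crucially the coefficient of $\|\Pi^n_h\|_0^2\le\mathcal{G}(\bA^n_h,\psi^n_h)$ carries a factor of $\triangle t$, which is exactly what makes discrete Gr\"onwall applicable. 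The remaining residual is majorized by $\tfrac{C^2+1}{2}\|\delta\psi^{n+1}\|_0^2$ and absorbed by $\tfrac{1}{\triangle t}\|\delta\psi^{n+1}\|_0^2$ under $\triangle t<1/2$.

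Collecting terms yields $\mathcal{G}(\bA^{n+1}_h,\psi^{n+1}_h)\le(1+C_3\triangle t)\mathcal{G}(\bA^n_h,\psi^n_h)$, and discrete Gr\"onwall together with $n\triangle t\le 1$ gives $\mathcal{G}(\bA^{n+1}_h,\psi^{n+1}_h)\le e^{C_3}\mathcal{G}(\bA^0_h,\psi^0_h)$, identifying $\tilde C=e^{C_3}$ as independent of $N$. The dissipation sum in the theorem follows by summing the same inequality from $i=0$ to $n$ and using this uniform bound to control the $\sum_i\triangle t\,\mathcal{G}(\bA^i_h,\psi^i_h)$ appearing on the right-hand side.
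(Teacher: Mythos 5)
Your proposal is correct and follows essentially the same route as the paper: both test the scheme at the time increments (the paper runs the computation in reverse, expanding $\mathcal{G}^{i+1}-\mathcal{G}^i$ via polarization identities and then recognizing the result as $2\mathrm{Re}F_\psi+2F_A$ at the increments), and both hinge on the same key device of putting an $O(\triangle t)$ coefficient in front of the $\|({i\over\kappa}\nabla+\bA_h^i)\psi_h^i\|_0^2\le\mathcal{G}(\bA_h^i,\psi_h^i)$ term in the Young estimate for the coupling $\mathrm{Re}(\Pi,\delta\bA\,\delta\psi)$, so that the recursion $\mathcal{G}^{i+1}\le(1+O(\triangle t))\mathcal{G}^i$ compounds to a bound independent of $N$ (the paper's choice $\epsilon=4C^2/(\sigma N-4C^2)$ is exactly your $C_3\triangle t$). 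The only cosmetic difference is your crude majorization of the residual by $\tfrac{C^2+1}{2}\|\delta\psi\|_0^2$, where the one-sided bound $|\psi|^2-1\ge-1$ (as the paper implicitly uses) is what makes the condition $\triangle t<\tfrac12$ suffice for arbitrary $C$.
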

\begin{proof}  
For any $a$, $b\in \mathbb{C}$, it holds that
\begin{equation}\label{basicineq}
|a|^2 - |b|^2=-|a-b|^2 + 2{\rm Re}(a,a-b).
\end{equation}
It follows that
\begin{equation}\label{abterms2}
\begin{aligned}
\left\| ({i\over \kappa}\nabla  + \boldsymbol{A}^{i+1}_h)\psi^{i+1}_h\right\|^2_0
-\left\| ({i\over \kappa}\nabla  + \boldsymbol{A}^{i+1}_h)\psi^{i}_h\right\|^2_0
=&2{\rm Re}\left(({i\over \kappa}\nabla  + \boldsymbol{A}^{i+1}_h)\psi^{i+1}_h,({i\over \kappa}\nabla  + \boldsymbol{A}^{i+1}_h)(\psi^{i+1}_h - \psi^{i}_h)\right)
\\
&-\left\| ({i\over \kappa}\nabla  + \boldsymbol{A}^{i+1}_h)(\psi^{i+1}_h - \psi^{i}_h)\right\|^2_0,
\end{aligned}
\end{equation}
\begin{equation}\label{abterms21} 
\left\| ({i\over \kappa}\nabla  + \boldsymbol{A}^{i+1}_h)\psi^{i}_h\right\|^2_0
- \left\| ({i\over \kappa}\nabla  + \boldsymbol{A}^{i}_h)\psi^{i}_h\right\|^2_0
=2{\rm Re}\left(({i\over \kappa}\nabla  + \boldsymbol{A}^{i+1}_h)\psi^{i}_h, (\boldsymbol{A}^{i+1}_h - \boldsymbol{A}^{i}_h) \psi^{i}_h\right)
-\left\| (\boldsymbol{A}^{i+1}_h - \boldsymbol{A}^{i}_h) \psi^{i}_h\right\|^2_0 .
\end{equation}
Note that the first term on the right-hand side of the above equation can be decomposed as 
\begin{equation}\label{abterms22}
\begin{aligned}
2Re(({i\over \kappa}\nabla  + \boldsymbol{A}^{i+1}_h)\psi^{i}_h, (\boldsymbol{A}^{i+1}_h - \boldsymbol{A}^{i}_h)\psi^{i}_h)
=&2Re\left(({i\over \kappa}\nabla  + \boldsymbol{A}^{i+1}_h)\psi^{i+1}_h, (\boldsymbol{A}^{i+1}_h - \boldsymbol{A}^{i}_h)\psi^{i+1}_h\right)
\\
&- 2Re\left(({i\over \kappa}\nabla  + \boldsymbol{A}^{i+1}_h)(\psi^{i+1}_h - \psi^{i}_h), (\boldsymbol{A}^{i+1}_h - \boldsymbol{A}^{i}_h)\psi^{i+1}_h\right) 
\\
&- 2Re\left(({i\over \kappa}\nabla  + \boldsymbol{A}^{i+1}_h)\psi^{i}_h, (\boldsymbol{A}^{i+1}_h - \boldsymbol{A}^{i}_h)(\psi^{i+1}_h - \psi^{i}_h)\right).
\end{aligned}
\end{equation}
An addition of \eqref{abterms2}, \eqref{abterms21} and \eqref{abterms22} yields
\begin{equation}\label{adelta20}
\begin{aligned}
\left\| ({i\over \kappa}\nabla  + \boldsymbol{A}^{i+1}_h)\psi^{i+1}_h\right\|^2_0
=&\left\| ({i\over \kappa}\nabla  + \boldsymbol{A}^{i}_h)\psi^{i}_h\right\|^2_0
-\|(\boldsymbol{A}^{i+1}_h - \boldsymbol{A}^{i}_h)\psi_h^i\|_0^2
-\left\| ({i\over \kappa}\nabla  + \boldsymbol{A}^{i+1}_h)(\psi^{i+1}_h - \psi^{i}_h)\right\|^2_0
\\
&+2{\rm Re}(({i\over \kappa}\nabla  + \boldsymbol{A}^{i+1}_h)\psi^{i+1}_h,({i\over \kappa}\nabla  + \boldsymbol{A}^{i+1}_h)(\psi^{i+1}_h - \psi^{i}_h))
\\ 
&+ 2{\rm Re}(({i\over \kappa}\nabla  + \boldsymbol{A}^{i+1}_h)\psi^{i+1}_h, (\boldsymbol{A}^{i+1}_h - \boldsymbol{A}^{i}_h)\psi^{i+1}_h)
\\
&- 2Re\left(({i\over \kappa}\nabla  + \boldsymbol{A}^{i+1}_h)(\psi^{i+1}_h - \psi^{i}_h), (\boldsymbol{A}^{i+1}_h - \boldsymbol{A}^{i}_h)\psi^{i+1}_h\right) 
\\
&- 2Re\left(({i\over \kappa}\nabla  + \boldsymbol{A}^{i+1}_h)\psi^{i}_h, (\boldsymbol{A}^{i+1}_h - \boldsymbol{A}^{i}_h)(\psi^{i+1}_h - \psi^{i}_h)\right).
\end{aligned}
\end{equation} 
By Young's inequality, we have
\begin{equation}\label{adelta201}
\begin{aligned}
\left|2Re\left(({i\over \kappa}\nabla  + \boldsymbol{A}^{i+1}_h)(\psi^{i+1}_h - \psi^{i}_h), (\boldsymbol{A}^{i+1}_h - \boldsymbol{A}^{i}_h)\psi^{i+1}_h\right)\right|
\le & \left\|({i\over \kappa}\nabla  + \boldsymbol{A}^{i+1}_h)(\psi^{i+1}_h - \psi^{i}_h)\right\|_0^2 + \|(\boldsymbol{A}^{i+1}_h - \boldsymbol{A}^{i}_h)\psi_h^{i+1}\|_0^2
\\
\left |2Re\left(({i\over \kappa}\nabla  + \boldsymbol{A}^{i+1}_h)\psi^{i}_h, (\boldsymbol{A}^{i+1}_h - \boldsymbol{A}^{i}_h)(\psi^{i+1}_h - \psi^{i}_h)\right)\right|
\le& \epsilon \left\| ({i\over \kappa}\nabla  + \boldsymbol{A}^{i}_h)\psi^{i}_h\right\|^2_0
+ {2\over \epsilon}\|(\boldsymbol{A}^{i+1}_h - \boldsymbol{A}^{i}_h)  \psi^{i+1}_h \|_0^2+{2\over \epsilon}\|(\boldsymbol{A}^{i+1}_h - \boldsymbol{A}^{i}_h)  \psi^{i}_h \|_0^2
\\
&+ \left|2Re\left((\bA_h^{i+1}-\bA_h^i)\psi_h^i, (\bA_h^{i+1}-\bA_h^i)(\psi_h^{i+1}-\psi_h^i)\right)\right|.
\end{aligned}
\end{equation} 
Let $C=\max_{1\le i\le n} \|\psi_h^i\|_\infty$. A substitution of \eqref{adelta201} into \eqref{adelta20}  yields
\begin{equation}\label{adelta2}
\begin{aligned}
\left\| ({i\over \kappa}\nabla  + \boldsymbol{A}^{i+1}_h)\psi^{i+1}_h\right\|^2_0
\le&(1+\epsilon) \left\| ({i\over \kappa}\nabla  + \boldsymbol{A}^{i}_h)\psi^{i}_h\right\|^2_0
+2{\rm Re}\left(({i\over \kappa}\nabla  + \boldsymbol{A}^{i+1}_h)\psi^{i+1}_h,({i\over \kappa}\nabla  + \boldsymbol{A}^{i+1}_h)(\psi^{i+1}_h - \psi^{i}_h)\right)
\\ 
&+ 2{\rm Re}\left(({i\over \kappa}\nabla  + \boldsymbol{A}^{i+1}_h)\psi^{i+1}_h, (\boldsymbol{A}^{i+1}_h - \boldsymbol{A}^{i}_h)\psi^{i+1}_h\right)
+ 4C^2(1+\frac{1}{\epsilon})\|(\boldsymbol{A}^{i+1}_h - \boldsymbol{A}^{i}_h) \|_0^2.
\end{aligned}
\end{equation} 
It follows \eqref{basicineq} that 
\begin{equation}\label{psidelta2}
\begin{aligned}
\left \||\psi^{i+1}_h|^2-1\right \|^2_0 - \left \||\psi^{i}_h|^2-1\right \|^2_0
= &4{\rm Re}((|\psi_h^{i+1}|^2-1)\psi_h^{i+1}, \psi_h^{i+1} - \psi_h^{i})
-\left\||\psi_h^{i+1}|^2 - |\psi_h^{i}|^2\right\|_0^2
\\
&-2\int_{\Omega} (|\psi_h^{i+1}|^2-1)|\psi_h^{i+1} - \psi_h^{i}|^2\,dx,
\end{aligned}
\end{equation}
\begin{equation}\label{curladelta2}
\begin{aligned}
\|\nabla\times \bA^{i+1}_h - \bH \|^2_0 - \|\nabla\times \bA^{i}_h - \bH \|^2_0
= &
-\|\nabla\times (\bA^{i+1}_h - \bA^{i}_h)\|^2_0
+ 2\left(\nabla\times \bA^{i+1}_h-\bH, \nabla\times (\bA^{i+1}_h - \bA^{i}_h)\right).
\end{aligned}
\end{equation}
A combination of \eqref{adelta2}, \eqref{psidelta2} and \eqref{curladelta2} gives
\begin{equation}\label{energyfinal1}
\begin{aligned}
\mathcal{G}(\bA^{i+1}_h, \psi^{i+1}_h)  - (1+\epsilon)\mathcal{G}(\bA^{i}_h, \psi^{i}_h)  
\le & 2{\rm Re}\left(({i\over \kappa}\nabla  + \boldsymbol{A}^{i+1}_h)\psi^{i+1}_h,({i\over \kappa}\nabla  + \boldsymbol{A}^{i+1}_h)(\psi^{i+1}_h - \psi^{i}_h)\right)
\\ 
&+ 2{\rm Re}\left(({i\over \kappa}\nabla  + \boldsymbol{A}^{i+1}_h)\psi^{i+1}_h, (\boldsymbol{A}^{i+1}_h - \boldsymbol{A}^{i}_h)\psi^{i+1}_h\right)
\\
&+2{\rm Re}\left((|\psi_h^{i+1}|^2-1)\psi_h^{i+1}, \psi_h^{i+1} - \psi_h^{i}\right)
+ 2\|\psi_h^{i+1} - \psi_h^{i} \|_0^2
\\
&+ 2(\nabla\times \bA^{i+1}_h-\bH, \nabla\times (\bA^{i+1}_h - \bA^{i}_h))
+ ({4C^2\over \epsilon} + 4C^2)\|\boldsymbol{A}^{i+1}_h - \boldsymbol{A}^{i}_h\|_0^2.
\end{aligned}
\end{equation}
By \eqref{jsdef} and \eqref{Fdef}, the right-hand side of the above inequality can be rewritten as 
\begin{equation}\label{energyfinal2}
\begin{aligned}
&2{\rm Re}F_\psi(\bA_h^{i+1}, \psi_h^{i+1};  \bA_h^{i+1}-\bA_h^{i}, \psi_h^{i+1}-\psi_h^{i})
+ 2F_A(\bA_h^{i+1}, \psi_h^{i+1}; \bA_h^{i+1}-\bA_h^{i}, \psi_h^{i+1}-\psi_h^{i})
\\
&+ 2\|\psi_h^{i+1} - \psi_h^{i} \|_0^2
+ ({4C^2\over \epsilon} + 4C^2)\|\boldsymbol{A}^{i+1}_h - \boldsymbol{A}^{i}_h\|_0^2.
\end{aligned}
\end{equation}
It follows \eqref{Eulerh2} that
\begin{equation}\label{energyfinal3}
\begin{aligned}
F_\psi(\bA_h^{i+1}, \psi_h^{i+1};  \bA_h^{i+1}-\bA_h^{i}, \psi_h^{i+1}-\psi_h^{i})
&=- { 1\over \triangle t}\|\psi^{i+1}_h-\psi^i_h\|_0^2,
\\
F_{A}(\bA_h^{i+1}, \psi_h^{i+1}; \bA_h^{i+1}-\bA_h^{i}, \psi_h^{i+1}-\psi_h^{i})
&=- { \sigma \over \triangle t}\|\bA^{i+1}_h-\bA^i_h\|_0^2.
\end{aligned}
\end{equation}
A combination of \eqref{energyfinal1}, \eqref{energyfinal2} and \eqref{energyfinal3} yields
\[
\begin{aligned}
&\mathcal{G}(\bA^{i+1}_h, \psi^{i+1}_h)  - (1+\epsilon)\mathcal{G}(\bA^{i}_h, \psi^{i}_h) + {\sigma\over \Delta t}\|\bA_h^{i+1}-\bA_h^{i}\|_0^2 + {1\over \triangle t}\|\psi^{i+1}_h-\psi^i_h\|_0^2
\\
\le & - ({ \sigma \over \triangle t} - {4C^2\over \epsilon} - 4C^2 )\|\bA^{i+1}_h-\bA^i_h\|_0^2
- ({1\over \triangle t} - 2)\|\psi^{i+1}_h-\psi^i_h\|_0^2.
\end{aligned}
\]
Choose $\epsilon= {4C^2\over \sigma N-4C^2}$. The right-hand side of the above inequality is negative. Thus, for any $0\le i\le N-1$,
\begin{equation}\label{induc:0}
\mathcal{G}(\bA^{i+1}_h, \psi^{i+1}_h) + {\sigma\over \Delta t}\|\bA_h^{i+1}-\bA_h^{i}\|_0^2 + {1\over \triangle t}\|\psi^{i+1}_h-\psi^i_h\|_0^2
\le  (1+\epsilon)\mathcal{G}(\bA^{i}_h, \psi^{i}_h).
\end{equation} 
Define 
\[
T_k^{n+1}=\mathcal{G}(\bA^{n+1}_h, \psi^{n+1}_h) + {1\over \Delta t}\sum_{j=k}^n( \sigma\|\bA_h^{j+1}-\bA_h^{j}\|_0^2 + \|\psi^{j+1}_h-\psi^j_h\|_0^2).
\]
Next we prove the following result by induction
\begin{equation}\label{induc:ass}
T_k^{n+1} \le (1+\epsilon)^{n+1-k}\mathcal{G}(\bA^{k}_h, \psi^{k}_h).
\end{equation} 
By \eqref{induc:0}, the inequality  \eqref{induc:ass} with $k=n$ holds. Note that $T_{k-1}^{n+1}=T_k^{n+1} + {\sigma\over \Delta t}\|\bA_h^{k}-\bA_h^{k-1}\|_0^2 + {1\over \triangle t}\|\psi^{k}_h-\psi^{k-1}_h\|_0^2$. It follows~\eqref{induc:0} that
\[
\begin{aligned}
T_{k-1}^{n+1}
\le & (1+\epsilon)^{n+1-k}(\mathcal{G}(\bA^{k}_h, \psi^{k}_h)  + {\sigma\over \Delta t}\|\bA_h^{k}-\bA_h^{k-1}\|_0^2 + {1\over \triangle t}\|\psi^{k}_h-\psi^{k-1}_h\|_0^2)
\le (1+\epsilon)^{n+1-(k-1)}\mathcal{G}(\bA^{k-1}_h, \psi^{k-1}_h),
\end{aligned}
\]
which completes the proof for \eqref{induc:ass}.
This implies that 
\[
\mathcal{G}(\bA^{n+1}_h, \psi^{n+1}_h) + {1\over \Delta t}\sum_{j=0}^n( \sigma\|\bA_h^{j+1}-\bA_h^{j}\|_0^2 + \|\psi^{j+1}_h-\psi^j_h\|_0^2)\le (1+\epsilon)^{N}\mathcal{G}(\bA^{0}_h, \psi^{0}_h)\le \tilde C\mathcal{G}(\bA^{0}_h, \psi^{0}_h),
\]
where $\tilde C$ is a constant independent of $N$.
\end{proof}
\begin{remark}
Numerically the constant $C$ in Assumption \ref{ass:bound} is not larger than 1, which indicates that the condition on the time step $\Delta t$ in Theorem \ref{energy:estimate} can easily be satisfied.
\end{remark}
\section{Newton method and preconditioner}\label{sec:Newton}
In this section, we briefly outline the Newton method for the nonlinear system \eqref{Eulerh2} and propose an efficient preconditioner for the linearized system to speed up the Newton iteration.

\subsection{A preconditioner for the linearized system}
The Newton method is employed to solve the nonlinear system~\eqref{Eulerh2} with the initial approximation $(\bA_h^{0}, \psi_{h}^{0})$ solved by \eqref{initialpro}. Given the approximation solution $(\bA_h^n, \psi_{h}^n)$ at time step $t_n$,  denote the initial guess of the  Newton iteration at current time step by $(\bA_h^{n+1,0}, \psi_{h}^{n+1,0})=(\bA_h^n, \psi_{h}^n)$. For any given $(\bA_h^{n+1,k}, \psi_{h}^{n+1,k})$,  the residual 
$f^{n+1,k}(\tilde \bA_h, \tilde \psi_h)=\sum_{j=1}^2f^{n+1,k}_j(\tilde \bA_h, \tilde \psi_h)$ with 
 \begin{align} 
f^{n+1,k}_1(\tilde \bA_h, \tilde \psi_h)&= -({\psi^{n+1,k}_h-\psi^n_h\over \triangle t}, \tilde \psi_h) 
- F_\psi(\bA^{n+1,k}_h,\psi^{n+1.k}_h;\tilde \bA_h, \tilde \psi_h),
\\
f^{n+1,k}_2(\tilde \bA_h, \tilde \psi_h)&= -(\sigma { \bA^{n+1,k}_h-\bA^n_h\over \triangle t},{\tilde \bA}_h) 
- F_A(\bA^{n+1,k}_h,\psi^{n+1,k}_h; \tilde \bA_h, \tilde \psi_h).
\end{align}
Then Newton method generates a sequence of feasible approximate solution $(\bA_h^{n+1,k+1}, \psi_{h}^{n+1,k+1})$ such that
\begin{equation}\label{newton2vform}
\bA_h^{n+1,k+1}=\bA_h^{n+1,k} + \bB_h^{n+1,k+1},
\quad 
\psi_{h}^{n+1,k+1}= \psi_{h}^{n+1,k} + \xi_h^{n+1,k+1},
\end{equation}
where $(\bB_h^{n+1,k+1}, \xi_{h}^{n+1,k+1})$ is the solution to the linearized system
\begin{equation}\label{linear2v}
a^{n+1,k+1}(\bB_h^{n+1,k+1}, \xi_{h}^{n+1,k+1}; \tilde \bA_h, \tilde \psi_h)=f^{n+1,k}(\tilde \bA_h, \tilde \psi_h),
\end{equation}
with the bilinear form 
$$
a^{n+1,k+1}(\bB_h, \xi_h; \tilde \bA_h, \tilde \psi_h)=\sum_{j=1}^4a_j^{n+1,k+1}(\bB_h, \xi_h; \tilde \bA_h, \tilde \psi_h)
$$
where $\psi_h^{n+1,k,*}$ is the conjugate of $\psi_h^{n+1,k}$ and
\begin{equation}\label{adef}
\begin{aligned}
a_1^{n+1,k+1}(\bB_h, \xi_h; \tilde \bA, \tilde \psi_h)
=&
\frac{1}{\triangle t}  (\xi_h, \tilde \psi_h)
+ ((2|\psi_h^{n+1,k}|^2-1)\xi_h, \tilde \psi_h)
+((\psi_h^{n+1,k})^2\xi_h^*,\tilde \psi_h) 
\\
&+ (\frac{i}{\kappa}\nabla \xi_h+\bA_h^{n+1,k}\xi_h, \frac{i}{\kappa}\nabla\tilde \psi_h+\bA_h^{n+1,k}\tilde \psi_h), 
\\
a_2^{n+1,k+1}(\bB_h, \xi_h; \tilde \bA, \tilde \psi_h)
=&
(\psi_h^{n+1,k}\bB_h, \frac{i}{\kappa}\nabla\tilde \psi_h+\bA_h^{n+1,k}\tilde \psi_h)
+ \left((\frac{i}{\kappa}\nabla \psi_h^{n+1,k}+\bA_h^{n+1,k}\psi_h^{n+1,k})\cdot \bB_h, \tilde \psi_h\right),
\\
a_3^{n+1,k+1}(\bB_h, \xi_h; \tilde \bA, \tilde \psi_h)
=&  \hbox{Re}\left( \frac{i}{\kappa}\nabla\xi_h+\bA_h^{n+1,k}\xi_h, \psi_h^{n+1,k}\tilde\bA_h \right)+
\hbox{Re}\left(\xi_h, (\frac{i}{\kappa}\nabla \psi_h^{n+1,k}+\bA_h^{n+1,k}\psi_h^{n+1,k}) \cdot\tilde\bA_h\right),
\\
a_4^{n+1,k+1}(\bB_h, \xi_h; \tilde \bA, \tilde \psi_h)
=&
\frac{\sigma}{\triangle t} (\bB_h, \tilde \bA_h)+(\nabla \times \bB_h, \nabla \times \tilde\bA_h)
+ (|\psi_h^{n+1,k}|^2\bB_h, \tilde \bA_h).
\end{aligned}
\end{equation} 
The matrix form of the linear system \eqref{linear2v} can be written as 
\begin{equation}\label{amatrix}
A^{n+1,k+1}x=b,\quad \mbox{with}\quad
A^{n+1,k+1}=\begin{pmatrix}
A^{n+1,k+1}_{\xi_h,\tilde \psi_h}&A^{n+1,k+1}_{\xi_h,\tilde \bA_h}\\
A^{n+1,k+1}_{\bB_h,\tilde \psi_h}&A^{n+1,k+1}_{\bB_h,\tilde \bA_h}
\end{pmatrix},\ 
x=\begin{pmatrix}
x_{\xi_h}^{n+1,k+1}\\
x_{\bB_h}^{n+1,k+1}
\end{pmatrix},\
b=\begin{pmatrix}
b_{\tilde \psi_h}\\
b_{\tilde \bA_h}
\end{pmatrix},
\end{equation}
where $A^{n+1,k+1}_{\xi_h,\tilde \psi_h}$, $A^{n+1,k+1}_{\xi_h,\tilde \bA_h}$, $A^{n+1,k+1}_{\bB_h,\tilde \psi_h}$ and $A^{n+1,k+1}_{\bB_h,\tilde \bA_h}$ are  matrix forms of  $a_j^{n+1,k+1}$ in \eqref{adef} with $j=1$, $2$, $3$ and $4$, respectively, and $b$ is the vector form of $f^{n+1,k}$ on the right-hand side of \eqref{linear2v}.

The linear problem \eqref{linear2v} needs to be solved in each Newton iteration \eqref{newton2vform}, until a stopping criteria is satisfied. For large systems of linear problems, exact solver can be very inefficient, and an iterative method with efficient preconditioner can speed up the computation. To start with the design of the preconditioner for the linear system \eqref{linear2v}, we define an auxiliary bilinear form 
\begin{equation}\label{aP1}
\begin{split}
a_{\rm \tilde P}^{n+1,k+1}(\xi_h, \bB_h; \tilde \psi_h, \tilde \bA_h)
=&\frac{1}{\triangle t}(\xi_h, \tilde \psi_h)+  (|\psi_h^{n+1,k}|^2\xi_h, \tilde \psi_h) + (\frac{i}{\kappa}\nabla\xi_h+\bA_h^{n+1,k}\xi_h, \frac{i}{\kappa}\nabla\tilde \psi_h+\bA_h^{n+1,k}\tilde \psi_h)
\\
&+ \frac{\sigma}{\triangle t} (\bB_h, \tilde \bA_h)
+ (\nabla \times\bB_h, \nabla \times\tilde \bA_h)
+ (|\psi_h^{n+1,k}|^2\bB_h, \tilde \bA_h),
\end{split}
\end{equation}
which is derived from  the norms
\begin{equation}\label{norm1}
\begin{aligned}
\|\xi_h\|_h^2&=\frac{1}{\triangle t} \|\xi_h\|_0^2+ \|\frac{i}{\kappa}\nabla\xi_h+\bA_h^{n+1,k}\xi_h\|_0^2
+ (|\psi_h^{n+1,k}|^2\xi_h, \xi_h),
&\forall \xi_h\in\bV_h,
\\
\|\bB_h\|_h^2&=\frac{\sigma}{\triangle t} \|\bB_h\|_0^2
+ \|\nabla \times\bB_h\|_0^2
+ (|\psi_h^{n+1,k}|^2\bB_h, \bB_h),& \forall \bB_h\in Q_h. 
\end{aligned}
\end{equation} 
Note that the matrix form of this bilinear form is block diagonal and can be written as 
\begin{equation}\label{pdef1}
\tilde P^{n+1,k+1}=\begin{pmatrix}
A^{n+1,k+1}_{{\rm P}, \xi}&0\\
0&A^{n+1,k+1}_{{\rm P}, \bB}
\end{pmatrix}.
\end{equation}
The inverse of $\tilde P^{n+1,k+1}$ is much easier to compute than that of the matrix $A^{n+1,k+1}$.
Then, we apply the GMRES method with $\tilde P^{n+1,k+1}$ as the preconditioner to solve the linear problem \eqref{amatrix} on each Newton iteration.

\begin{algorithm}[H]
  \SetAlgoLined
  \KwData{Given the initial data $\psi_0$, $\bA_0$, boundary data $\bH$, time step $\Delta t={T\over N}$}
  \KwResult{Approximation $(\bA_h^N, \psi_h^N)$ at time $T=t_N$ }

  Initialization: $(\bA^{0}_h,\psi^{0}_h)$ are the projections of $\bA_0$ and $\psi_0$ into $Q_h$ and $\bV_h$ by solving \eqref{initialpro};
  
  \For{$n=1:N-1$}{
  $k=0$\;
  $(\bA^{n+1,0}_h,\psi^{n+1,0}_h)=(\bA^{n}_h,\psi^{n}_h)$\;
  $err$ is the residual of the nonlinear problem \eqref{Eulerh2}\;
  \While{$err>tol$}{
    solve the linear problem \eqref{amatrix} by GMRES method with preconditioner $\tilde P^{n+1,k+1}$ in \eqref{pdef1} and denote the solution by  $(\bB_h^{n+1,k+1}, \xi_{h}^{n+1,k+1})$\;
    let $\bA_h^{n+1,k+1}=\bA_h^{n+1,k} + \bB_h^{n+1,k+1}$, $\psi_{h}^{n+1,k+1}= \psi_{h}^{n+1,k} + \xi_h^{n+1,k+1}$\;
    compute error, say $err=\|\bB_h^{n+1,k+1}\| + \|\xi_{h}^{n+1,k+1}\|$ \;
    $k=k+1$\;
    }
    }
  \caption{Newton method with preconditioner}
\end{algorithm}

\begin{theorem}\label{th:infsup}
Assume that  $\triangle t\lesssim \frac{1}{1+h^{-1}\|\psi_h^{n+1,k}\|_{\infty}+\|\psi_h^{n+1,k}\|_{\infty}^2}$. There exist positive constants $C_b$ and   $\beta_0$ which are independent the mesh size $h$ such that 
\begin{equation}\label{bdd}
|a^{n+1,k+1}(\bB_h, \xi_h;  \tilde \bA_h, \tilde \psi_h)|\le C_b \left(\|\xi_h\|_h+\|\bB_h\|_h \right) \left(\|\tilde\psi_h\|_h+\|\tilde\bA_h\|_h\right),
\end{equation}
\begin{equation}\label{infsup1}
|a^{n+1,k+1}(\bB_h, \xi_h; \bB_h, \xi_h)|\ge \beta_0(\|\xi_h\|_h+\|\bB_h\|_h)^2.
\end{equation}
\end{theorem}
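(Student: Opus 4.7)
The plan is to prove both \eqref{bdd} and \eqref{infsup1} by decomposing $a^{n+1,k+1}=\sum_{j=1}^{4} a_j^{n+1,k+1}$ and exploiting the fact that the diagonal blocks $a_1$ and $a_4$ essentially realize the squared norms \eqref{norm1}, while the coupling blocks $a_2,a_3$ produce cross terms that must be dominated. I will abbreviate the gauge derivative by $\mathcal{D}_h:=\frac{i}{\kappa}\nabla+\bA_h^{n+1,k}$. The core identity for the boundedness estimate is $\|\psi_h^{n+1,k}\bB_h\|_0^2=(|\psi_h^{n+1,k}|^2\bB_h,\bB_h)\le \|\bB_h\|_h^2$, which together with Cauchy--Schwarz immediately bounds every term of $a_2,a_3$ that pairs $\psi_h^{n+1,k}\bB_h$ or $\psi_h^{n+1,k}\xi_h$ against a gauge derivative by a product of $\|\cdot\|_h$ norms. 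The remaining terms involve the fixed vector $\mathcal{D}_h\psi_h^{n+1,k}$ paired against $\bB_h\tilde\psi_h$ or $\xi_h\tilde\bA_h$; these I would place in $L^\infty\times L^2\times L^2$ and exploit the inverse estimate $\|\nabla\psi_h^{n+1,k}\|_\infty\lesssim h^{-1}\|\psi_h^{n+1,k}\|_\infty$ to obtain $\|\mathcal{D}_h\psi_h^{n+1,k}\|_\infty\lesssim (h^{-1}+\|\bA_h^{n+1,k}\|_\infty)\|\psi_h^{n+1,k}\|_\infty$. The residual $L^2$ factors each carry a $\sqrt{\Delta t}$ through $\|\bB_h\|_0\le \sqrt{\Delta t/\sigma}\,\|\bB_h\|_h$ and $\|\tilde\psi_h\|_0\le \sqrt{\Delta t}\,\|\tilde\psi_h\|_h$, so the full coefficient matches the hypothesis on $\Delta t$. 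The quartic perturbation in $a_1$ is treated by the same $L^\infty$ idea using $\|\psi_h^{n+1,k}\|_\infty^2$ and the $(|\psi_h^{n+1,k}|^2\xi_h,\xi_h)$ piece of $\|\xi_h\|_h^2$.

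For the coercivity \eqref{infsup1} I would test with $(\bB_h,\xi_h)$ itself. Then $a_4(\bB_h,\xi_h;\bB_h,\xi_h)=\|\bB_h\|_h^2$ exactly, while $a_1(\bB_h,\xi_h;\bB_h,\xi_h)$ contributes $\tfrac{1}{\Delta t}\|\xi_h\|_0^2+\|\mathcal{D}_h\xi_h\|_0^2$ together with the quartic integral $\int\bigl[(2|\psi|^2-1)|\xi_h|^2+{\rm Re}\,\psi^2(\xi_h^*)^2\bigr]\,dx$. Writing pointwise $\xi_h=|\xi_h|e^{i\theta}$ and $\psi_h^{n+1,k}=|\psi_h^{n+1,k}|e^{i\phi}$ shows ${\rm Re}\,\psi^2(\xi_h^*)^2=|\psi|^2|\xi_h|^2\cos 2(\phi-\theta)\ge -|\psi|^2|\xi_h|^2$, so the quartic integral is bounded below by $\int(|\psi|^2-1)|\xi_h|^2\,dx\ge -\|\xi_h\|_0^2$, and this negative piece is absorbed by $\tfrac{1}{\Delta t}\|\xi_h\|_0^2$ when $\Delta t$ is sufficiently small. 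Combining, $a_1+a_4$ yields a strict positive lower bound $\gtrsim \|\xi_h\|_h^2+\|\bB_h\|_h^2$. The cross contributions $a_2+a_3$ evaluated at $(\bB_h,\xi_h;\bB_h,\xi_h)$ are estimated by the bounds from the first paragraph together with Young's inequality, with parameters tuned so that each cross term carries a constant $\lesssim \Delta t\,(1+h^{-1}\|\psi_h^{n+1,k}\|_\infty+\|\psi_h^{n+1,k}\|_\infty^2)$ which the hypothesis makes arbitrarily small; these terms are then absorbed into the strict positive contribution from $a_1+a_4$.

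The main obstacle is the cross term $(\mathcal{D}_h\psi_h^{n+1,k}\cdot\bB_h,\xi_h)$ (and its analogue in $a_3$): because the coefficient $\mathcal{D}_h\psi_h^{n+1,k}$ carries a derivative of a discrete, non-smooth function, only an $L^\infty$ bound via the inverse estimate is available, and this is what produces the $h^{-1}$ factor precisely compensated by the time-step hypothesis $\Delta t\lesssim (1+h^{-1}\|\psi_h^{n+1,k}\|_\infty+\|\psi_h^{n+1,k}\|_\infty^2)^{-1}$. Once this single term is tamed, the remaining estimates are routine applications of Cauchy--Schwarz and Young's inequality, organized so that the norms \eqref{norm1} appear on the right-hand side.
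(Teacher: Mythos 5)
Your overall architecture (split $a^{n+1,k+1}=\sum_j a_j^{n+1,k+1}$, read the diagonal blocks as the norms \eqref{norm1}, absorb the coupling blocks by Young's inequality under the time-step restriction) matches the paper, and your treatment of the quartic term and of the terms pairing $\psi_h^{n+1,k}\bB_h$ or $\psi_h^{n+1,k}\xi_h$ against a gauge derivative is exactly what the paper does. The gap is in your handling of the remaining coupling term $\left((\frac{i}{\kappa}\nabla\psi_h^{n+1,k}+\bA_h^{n+1,k}\psi_h^{n+1,k})\cdot\bB_h,\tilde\psi_h\right)$ and its mirror in $a_3$. Bounding $\|\frac{i}{\kappa}\nabla\psi_h^{n+1,k}+\bA_h^{n+1,k}\psi_h^{n+1,k}\|_\infty\lesssim(h^{-1}+\|\bA_h^{n+1,k}\|_\infty)\|\psi_h^{n+1,k}\|_\infty$ and then paying two factors of $\sqrt{\triangle t}$ through $\|\bB_h\|_0$ and $\|\tilde\psi_h\|_0$ leaves you needing $\triangle t\,\|\bA_h^{n+1,k}\|_\infty\|\psi_h^{n+1,k}\|_\infty\lesssim 1$; the hypothesis of the theorem contains no control on $\|\bA_h^{n+1,k}\|_\infty$ at all. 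Contrast with Theorem \ref{th:modifiedP}, whose hypothesis deliberately adds $\|\bA_h^{n+1,k}\|^2_\infty$ precisely because the decoupled real/imaginary formulation cannot avoid it. Under the assumption as stated, your constants $C_b$ and $\beta_0$ are therefore not bounded.

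The paper's device for avoiding this is the elementwise integration by parts \eqref{integrationskill}: the derivative is moved off $\psi_h^{n+1,k}$ and recombined with the $\bA_h^{n+1,k}\psi_h^{n+1,k}\cdot\bB_h$ contribution, so that the term becomes $(\psi_h^{n+1,k},(\frac{i}{\kappa}\nabla+\bA_h^{n+1,k})\tilde\psi_h\cdot\bB_h)-\frac{i}{\kappa}(\nabla(\psi_h^{n+1,k}\tilde\psi_h^*),\bB_h)$. The first piece pairs $\psi_h^{n+1,k}\bB_h$ against the \emph{full} gauge derivative of the test function, and both factors are controlled by the $\|\cdot\|_h$ norms with no $L^\infty$ bound on $\bA_h^{n+1,k}$; the second piece is a product of discrete functions to which the inverse inequality \eqref{nablaest} applies, producing only the factor $h^{-1}\|\psi_h^{n+1,k}\|_\infty$ that the time-step hypothesis is designed to compensate. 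You correctly identified this coupling term as the main obstacle, but the direct $L^\infty$ route you chose for it does not close under the stated hypothesis; you need the integration-by-parts recombination (or an equivalent way of keeping $\frac{i}{\kappa}\nabla\tilde\psi_h+\bA_h^{n+1,k}\tilde\psi_h$ intact) in both the boundedness and the coercivity estimates.
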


\begin{proof}
By the Cauchy-Schwarz inequality, the boundedness of $a_1^{n+1,k+1}(\bB_h, \xi_h; \tilde \bA, \tilde \psi_h)$ and $a_4^{n+1,k+1}(\bB_h, \xi_h; \tilde \bA, \tilde \psi_h)$ in~\eqref{adef} is obvious. 
By applying the integration by parts,
\begin{equation}\label{integrationskill}
\begin{aligned}
&\left((\frac{i}{\kappa}\nabla \psi_h^{n+1,k}+\bA_h^{n+1,k}\psi_h^{n+1,k})\cdot \bB_h, \tilde \psi_h\right)\\
=& \left(\psi_h^{n+1,k}, (\frac{i}{\kappa}\nabla +\bA_h^{n+1,k})\cdot (\tilde \psi_h \bB_h)\right) 
-\frac{i}{\kappa}\sum_{K\in \mathcal{T}_h}\langle \psi_h^{n+1,k}n, \tilde \psi_h \bB_h \rangle_{\partial K}
\\
=&\left(\psi_h^{n+1,k}, (\frac{i}{\kappa}\nabla +\bA_h^{n+1,k})\tilde \psi_h\cdot  \bB_h \right) 
+ (\psi_h^{n+1,k}, \tilde \psi_h\frac{i}{\kappa}\nabla\cdot  \bB_h)
-\frac{i}{\kappa}\sum_{K\in \mathcal{T}_h}\langle \psi_h^{n+1,k}n, \tilde \psi_h\bB_h\rangle_{\partial K}
\\
=&\left(\psi_h^{n+1,k}, (\frac{i}{\kappa}\nabla +\bA_h^{n+1,k})\tilde \psi_h\cdot  \bB_h \right) 
- \frac{i}{\kappa}(\nabla(\psi_h^{n+1,k}\tilde \psi_h^*),  \bB_h).
\end{aligned}
\end{equation}  
By the inverse inequality,
\begin{equation}\label{nablaest}
|\frac{i}{\kappa}(\nabla(\psi_h^{n+1,k}\tilde \psi_h^*),  \bB_h)|\le C_0h^{-1}\|\psi_h^{n+1,k}\|_\infty \|\tilde \psi_h\|_0\|\bB_h\|_0,
\end{equation}
which leads to the boundedness of $a_2^{n+1,k+1}(\bB_h, \xi_h; \tilde \bA, \tilde \psi_h)$ and $a_3^{n+1,k+1}(\bB_h, \xi_h; \tilde \bA, \tilde \psi_h)$ in \eqref{adef} directly, and completes the proof for the boundedness in \eqref{bdd}.   
It follows \eqref{integrationskill} that
\begin{equation}\label{acoer}
\begin{aligned}
a^{n+1,k+1}(\bB_h, \xi_h; \bB_h, \xi_h)
=&(\frac{1}{\triangle t} -1)\|\xi_h\|_0^2
+\frac{\sigma}{\triangle t}  \|\bB_h\|_0^2
+ \|\nabla \times \bB_h\|_0^2
+ (|\psi_h^{n+1,k}|^2\bB_h, \bB_h)
+ \|\frac{i}{\kappa}\nabla \xi_h+\bA_h^{n+1,k}\xi_h\|_0^2
\\
&
+ (2|\psi_h^{n+1,k}|^2\xi_h, \xi_h) + \left ((\psi_h^{n+1,k})^2\xi_h^*, \xi_h\right) 
- \frac{i}{\kappa}(\nabla(\psi_h^{n+1,k}\xi_h^*),  \bB_h)
- \hbox{Re}\left(\frac{i}{\kappa}(\nabla(\psi_h^{n+1,k}\xi_h^*),  \bB_h)\right)
\\
&+ 2(\psi_h^{n+1,k}\bB_h, \frac{i}{\kappa}\nabla\xi_h+\bA_h^{n+1,k}\xi_h) 
+ 2\hbox{Re}\left(\psi_h^{n+1,k}\bB_h, \frac{i}{\kappa}\nabla\xi_h+\bA_h^{n+1,k}\xi_h\right).
\end{aligned}
\end{equation}
By the Young's inequality and \eqref{nablaest}, there exist the following estimates for the last five terms on the right-hand side of the above equation
\begin{equation}
\begin{aligned}
-|((\psi_h^{n+1,k})^2\xi_h^*,\xi_h)|
&\ge -(|\psi_h^{n+1,k}|^2\xi_h,\xi_h), 
\\
-|\frac{i}{\kappa}(\nabla(\psi_h^{n+1,k}\xi_h^*),  \bB_h)|
&\ge - \frac{\sigma}{4\Delta t}\|\bB_h\|_0^2 - \frac{C_0^2\Delta t h^{-2}\|\psi_h^{n+1,k}\|_\infty^2}{\sigma} \|\xi_h\|_0^2,
\\
-|(\psi_h^{n+1,k}\bB_h, \frac{i}{\kappa}\nabla\xi_h+\bA_h^{n+1,k}\xi_h)|
&\ge  
-2\|\psi_h^{n+1,k}\|_\infty^2\|\bB_h\|_0^2
- \frac{1}{8}\|\frac{i}{\kappa}\nabla\xi_h+\bA_h^{n+1,k}\xi_h\|_0^2.
\end{aligned}
\end{equation} 
A substitution of the above estimates into the equation \eqref{acoer} leads to 
\begin{equation}\label{eq:31}
\begin{aligned}
|a^{n+1,k+1}(\bB_h, \xi_h; \bB_h, \xi_h)|
\ge&(\frac{1}{\triangle t} -1- \frac{2C_0^2\Delta t h^{-2}\|\psi_h^{n+1,k}\|_\infty^2}{\sigma} )\|\xi_h\|_0^2
+ (\frac{\sigma}{2\triangle t} -8\|\psi_h^{n+1,k}\|_\infty^2) \|\bB_h\|_0^2
+ \|\nabla \times \bB_h\|_0^2
\\
&
+ (|\psi_h^{n+1,k}|^2\bB_h, \bB_h)
+ \frac12\|\frac{i}{\kappa}\nabla \xi_h+\bA_h^{n+1,k}\xi_h\|_0^2
+ (|\psi_h^{n+1,k}|^2\xi_h, \xi_h).
\end{aligned}
\end{equation}
Hence for $\triangle t\lesssim \frac{1}{1+h^{-1}\|\psi_h^{n+1,k}\|_{\infty}+\|\psi_h^{n+1,k}\|_{\infty}^2}$ leads to  the desired conclusion. 
\end{proof} 
\begin{remark}
By Assumption \ref{ass:bound}, $\|\psi_h^{n+1,k}\|_\infty$ is also bounded if the Newton iteration is convergent. 
Hence there exists an independent constant $c_0$ of $h$ such that any $\Delta t\le c_0h$  satisfies the assumption in Theorem \ref{th:infsup}. 
\end{remark}
A combination of Theorem \ref{th:infsup} implies the wellposed-ness of  the linearized system  \eqref{linear2v}. It is known that for a symmetric bilinear form, the wellposed-ness with respect to a norm implies the efficiency of the block preconditioner induced by this norm for the bilinear form in consideration \cite{mardal2011preconditioning,chen2020robust}. 
This motivates the design of the diagonal preconditioner induced by the norm in \eqref{norm1}. Although the bilinear form  $a^{n+1,k+1}(\bB_h^{n+1,k+1}, \xi_{h}^{n+1,k+1}; \tilde \bA_h, \tilde \psi_h)$ is not symmetric, this proposed preconditioner proves efficient numerically.

\subsection{A modified preconditioner for the linearized system}
Note that the preconditioner $\tilde P^{n+1,k+1}$ in \eqref{pdef1} depends on the solution at previous iteration step.  In this section, we propose a modified preconditioner which is independent of the discrete solutions and only needs to be assembled once.

We start with representing the nonlinear system \eqref{Eulerh2} in an equivalent formulation by decoupling complex variables in $\bV_h$ into two real variables in the linear finite space 
$$
V_h=\{\tilde \psi_h\in H^1(\Omega, \R): \tilde \psi_h|_K \in P_1(K,\R), \  \tilde \psi_h \ \mbox{ is continuous on any }\ e\in \cE_h\}.
$$ 
We add the subscript ``$r$" and ``$i$" to the notation of any complex function $  \psi_h\in \bV_h$ to represent its real part and imaginary part, respectively, and denote
$$
u_h^n=(\bA_h^n, \psi_{r,h}^n, \psi_{i,h}^n),\quad \mbox{with }\quad \psi_h^n=  \psi_{r,h}^n + i \psi_{i,h}^n,\qquad \forall 0\le n\le N.
$$  
Given the approximation solution $u_h^n=(\bA_h^n, \psi_{r,h}^n, \psi_{i,h}^n)$ at previous time step $t_n$, the nonlinear system \eqref{Eulerh2} seeks $u_h^{n+1}=(\bA_h^{n+1}, \psi_{r,h}^{n+1}, \psi_{i,h}^{n+1})\in Q_h\times V_h\times V_h$ such that for any $\tilde v_h=(\tilde \bA_h, \tilde \psi_{r,h}, \tilde \psi_{i,h})\in~Q_h\times V_h\times V_h$, 
\begin{equation}\label{newtoneq} 
G_r(u_h^{n+1}; \tilde \psi_{r,h}) + G_i(u_h^{n+1}; \tilde \psi_{i,h}) + G_A(u_h^{n+1}; \tilde \bA_{h})
=
{1\over \Delta t}\left(
(\psi_{r,h}^{n}, \tilde \psi_{r,h}) 
+ (\psi_{i,h}^{n}, \tilde \psi_{i,h}) 
+ \sigma(\bA^{n}_h, \tilde \bA_h) 
\right)
+ \langle \bH,  \boldsymbol{n}\times\tilde{\boldsymbol{A}_h}\rangle_{\partial \Omega},
\end{equation}
where  
\begin{equation} 
\begin{aligned}
G_r(u_h^{n+1}; \tilde \psi_{r,h})=&{1\over \Delta t}(\psi_{r,h}^{n+1}, \tilde \psi_{r,h}) 
+ {1\over \kappa^2}(\nabla \psi_{r,h}^{n+1},\nabla \tilde \psi_{r,h}) 
+ (\bA_h^{n+1} \psi_{r,h}^{n+1}, \bA_h^{n+1} \tilde \psi_{r,h}) 
+ ((|\psi_{r,h}^{n+1}|^2 + |\psi_{i,h}^{n+1}|^2-1)\psi_{r,h}^{n+1}, \tilde \psi_{r,h})  
\\
&-{1\over \kappa}(\nabla \psi_{i,h}^{n+1}, \bA_h^{n+1} \tilde \psi_{r,h}) 
+ {1\over \kappa}( \bA_h^{n+1}\psi_{i,h}^{n+1}, \nabla \tilde \psi_{r,h}),
\\
G_i(u_h^{n+1}; \tilde \psi_{i,h})=&{1\over \Delta t}(\psi_{i,h}^{n+1}, \tilde \psi_{i,h}) 
+ {1\over \kappa^2}(\nabla \psi_{i,h}^{n+1},\nabla \tilde \psi_{i,h})  
+ (\bA_h^{n+1}\psi_{i,h}^{n+1}, \bA_h^{n+1} \tilde \psi_{i,h})
+ ((|\psi_{r,h}^{n+1}|^2 + |\psi_{i,h}^{n+1}|^2-1)\psi_{i,h}^{n+1}, \tilde \psi_{i,h})  
\\
&- {1\over \kappa}(\bA_h^{n+1}\psi_{r,h}^{n+1}, \nabla \tilde \psi_{i,h}) 
+ {1\over \kappa}(\nabla \psi_{r,h}^{n+1}, \bA_h^{n+1} \tilde \psi_{i,h}),
\\
G_A(u_h^{n+1}; \tilde \bA_{h})=&{\sigma\over \Delta t}(\bA^{n+1}_h, \tilde \bA_h) 
+ (\nabla \times \bA_h^{n+1},\nabla \times \tilde \bA_h) 
+ ((|\psi_{r,h}^{n+1}|^2 + |\psi_{i,h}^{n+1}|^2) \bA_h^{n+1}, \tilde \bA_h) 
\\
&- {1\over \kappa}(\nabla \psi_{i,h}^{n+1}, \tilde \bA_h  \psi_{r,h}^{n+1}) 
+ {1\over \kappa}( \nabla \psi_{r,h}^{n+1}, \tilde \bA_h \psi_{i,h}^{n+1}).
\end{aligned} 
\end{equation}
To solve $u_h^{n+1}$ of \eqref{newtoneq}, the Newton method generates a sequence of feasible iterates $u_h^{n+1,k}$ in the form of
\begin{equation}
u_h^{n+1,k+1} = u_h^{n+1,k} + w_h^{n+1,k+1},
\end{equation}
where the initial guess $u_h^{n+1,0}=u_h^{n}$ and the increasment $w_h^{n+1,k+1}=(\bB_h^{n+1,k+1}, \xi_{r,h}^{n+1,k+1}, \xi_{i,h}^{n+1,k+1})$ is the solution to the linearized problem of the nonlinear system \eqref{newtoneq}.
A direct calculation derives the following linearized system of \eqref{newtoneq}
\begin{equation} \label{lineareqform}
a_L^{n+1,k+1}(\bB_h^{n+1,k+1}, \xi_{r,h}^{n+1,k+1}, \xi_{i,h}^{n+1,k+1}; \tilde \bA_h, \tilde \psi_{r,h}, \tilde \psi_{i,h})=f_L^{n+1,k}(\tilde \bA_h, \tilde \psi_{r,h}, \tilde \psi_{i,h}),
\end{equation}
where
$
f_L^{n+1,k}(\tilde \bA_h, \tilde \psi_{r,h}, \tilde \psi_{i,h}) ={\left(
(\psi_{r,h}^{n}, \tilde \psi_{r,h}) 
+(\psi_{i,h}^{n}, \tilde \psi_{i,h})
+ \sigma(\bA^{n}_h, \tilde \bA_h) 
\right)\over \Delta t}
- \left(
G_r(u_h^{n+1,k}; \tilde \psi_{r,h})
+ G_i(u_h^{n+1,k}; \tilde \psi_{i,h})
+ G_A(u_h^{n+1,k}; \tilde \bA_{h})
\right)
$
and
$
a_L^{n+1,k+1}(w_h; \tilde v_h)=\sum_{j=1}^3 a_{L,j}^{n+1,k+1}(w_h; \tilde v_h)
$
with 
\begin{equation} 
\begin{aligned}
a_{L,1}^{n+1,k+1}(w_h; \tilde v_h)
&={1\over \Delta t}(\xi_{r, h},\tilde \psi_{r,h}) 
+ \frac{1}{\kappa^2}(\nabla\xi_{r, h},\nabla \tilde \psi_{r,h}) 
+ \left((|\bA_h^{n+1,k}|^2 + 3|\psi_{r,h}^{n+1,k}|^2 + |\psi_{i,h}^{n+1,k}|^2 -1)\xi_{r, h}, \tilde \psi_{r,h}\right)
\\
& 
-\frac{1}{\kappa}(\bA_h^{n+1,k} \cdot\nabla\xi_{i, h}, \tilde \psi_{r,h})
+\frac{1}{\kappa}(\bA_h^{n+1,k} \xi_{i, h}, \nabla\tilde \psi_{r,h})
+ (2\psi_{r,h}^{n+1,k}\psi_{i,h}^{n+1,k} \xi_{i, h},\tilde \psi_{r,h})
\\
&+ \left((2\psi_{r,h}^{n+1,k}\bA_h^{n+1,k} -\frac{1}{\kappa}\nabla\psi_{i,h}^{n+1,k})\cdot \bB_h, \tilde \psi_{r,h}\right)
+\frac{1}{\kappa}(\psi_{i,h}^{n+1,k}\bB_h, \nabla\tilde \psi_{r,h})
\\
a_{L,2}^{n+1,k+1}(w_h; \tilde v_h)
&={1\over \Delta t}(\xi_{i, h},\tilde \psi_{i,h})
+\frac{1}{\kappa^2}(\nabla\xi_{i, h},\nabla \tilde \psi_{i,h}) 
+ \left((|\bA_h^{n+1,k}|^2 + |\psi_{r,h}^{n+1,k}|^2 + 3|\psi_{i,h}^{n+1,k}|^2 -1)\xi_{i, h}, \tilde \psi_{i,h}\right) 
\\
&
+\frac{1}{\kappa}(\bA_h^{n+1,k}\cdot \nabla\xi_{r, h},  \tilde \psi_{i,h})
-\frac{1}{\kappa}(\bA_h^{n+1,k} \xi_{r, h}, \nabla\tilde \psi_{i,h})
+ 2(\psi_{r,h}^{n+1,k}\psi_{i,h}^{n+1,k}\xi_{r, h} ,\tilde \psi_{i,h})
\\
&+ \left((2\psi_{i,h}^{n+1,k}\bA_h^{n+1,k} +\frac{1}{\kappa}\nabla\psi_{r,h}^{n+1,k})\cdot \bB_h, \tilde \psi_{i,h}\right) 
-\frac{1}{\kappa}(\psi_{r,h}^{n+1,k} \bB_h  , \nabla\tilde \psi_{i,h}),
\\
a_{L,3}^{n+1,k+1}(w_h; \tilde v_h)
&={\sigma\over \Delta t}(\bB_h, \tilde \bA_h)
+(\nabla\times \bB_h,\nabla\times \tilde \bA_h)
+ \left((|\psi_{r,h}^{n+1,k}|^2 + |\psi_{i,h}^{n+1,k}|^2) \bB_h, \tilde \bA_h\right)
\\
&
+ \left((2\psi_{r,h}^{n+1,k}\bA_h^{n+1,k} -\frac{1}{\kappa}\nabla\psi_{i,h}^{n+1,k})\xi_{r, h},\tilde \bA_h\right) 
+ \frac{1}{\kappa}( \psi_{i,h}^{n+1,k}\nabla\xi_{r, h},\tilde \bA_h) 
\\
&
+ \left((2\psi_{i,h}^{n+1,k}\bA_h^{n+1,k} + \frac{1}{\kappa}\nabla \psi_{r,h}^{n+1,k})\xi_{i, h},\tilde \bA_h\right)
-\frac{1}{\kappa}(\psi_{r,h}^{n+1,k}\nabla\xi_{i, h}, \tilde \bA_h) .
\end{aligned} 
\end{equation}
Define a bilinear form
\begin{equation}\label{aP2}
\begin{aligned}
a_{\rm P}(w_h; \tilde v_h)=&{1\over \Delta t}(\xi_{r,h},\tilde \psi_{r,h}) 
+ \frac{1}{\kappa^2}(\nabla\xi_{r,h},\nabla \tilde \psi_{r,h})
+{1\over \Delta t}(\xi_{i,h},\tilde \psi_{i,h})
+\frac{1}{\kappa^2}(\nabla\xi_{i,h},\nabla \tilde \psi_{i,h})
+{\sigma\over \Delta t}(\bB_h, \tilde \bA_h) 
+(\nabla\times \bB_h,\nabla\times \tilde \bA_h),
\end{aligned} 
\end{equation} 
which is derived from the norms of any $w_h=(\bB_h, \xi_{r,h}, \xi_{i,h})\in Q_h\times V_h\times V_h$ defined by
\begin{equation}\label{norm2}
\begin{aligned}
|\|w_h|\|^2=\frac{1}{\triangle t} \|\xi_{r,h}\|_0^2+ \frac{1}{\kappa^2}\|\nabla\xi_{r,h}\|_0^2 
+\frac{1}{\triangle t} \|\xi_{i,h}\|_0^2+ \frac{1}{\kappa^2}\|\nabla\xi_h\|_0^2 
+\frac{\sigma}{\triangle t} \|\bB_h\|_0^2
+ \|\nabla \times\bB_h\|_0^2.
\end{aligned}
\end{equation} 
Let $P$ be the matrix form of the bilinear form defined in \eqref{aP2}
\begin{equation}\label{pdef2}
P=\begin{pmatrix}
P_{\xi_r}&&\\
&P_{\xi_i}&\\
&&P_{\bB}
\end{pmatrix},
\end{equation} 
where $P_{\xi_r}$, $P_{\xi_i}$ and $P_{\bB}$ are the matrix form of the terms in \eqref{aP2} with respect to the variables $\xi_{r,h}$, $\xi_{i,h}$ and $\bB_h$, respectively. The Newton method with the modified preconditioner \eqref{pdef2} is presented in Algorithm \ref{alg:Newton2}. 

\begin{algorithm}[H]
  \SetAlgoLined
  \KwData{Given the initial data $\psi_0$, $\bA_0$, boundary data $\bH$, time step $\Delta t={T\over N}$}
  \KwResult{Approximation $(\bA_h^N, \psi_h^N)$ at time $T=t_N$ }

  Initialization: $(\bA^{0}_h,\psi^{0}_{r,h},\psi^{0}_{i,h})$ are the projections of $\bA_0$, $\mbox{Re}\psi_0$ and $\mbox{Im}\psi_0$ into $Q_h$, $V_h$ and $V_h$, respectively;
  
  \For{$n=1:N-1$}{
  $k=0$, $(\bA^{n+1,0}_h,\psi^{n+1,0}_{r,h},\psi^{n+1,0}_{i,h})=(\bA^{n}_h,\psi^{n}_{r,h}, \psi^{n}_{i,h})$\;
  $err$ is the residual of the nonlinear problem \eqref{newtoneq} \;
  \While{$err>tol$}{
    solve the linear problem \eqref{lineareqform} by GMRES method with preconditioner $  P$ in \eqref{pdef2} and denote the solution by  $w_h^{n+1,k+1}=(\bB_h^{n+1,k+1}, \xi_{r,h}^{n+1,k+1}, \xi_{i,h}^{n+1,k+1})$\;
    let $\bA_h^{n+1,k+1}=\bA_h^{n+1,k} + \bB_h^{n+1,k+1}$, $\psi_{r,h}^{n+1,k+1}= \psi_{r,h}^{n+1,k} + \xi_{r,h}^{n+1,k+1}$, $\psi_{i,h}^{n+1,k+1}= \psi_{i,h}^{n+1,k} + \xi_{i,h}^{n+1,k+1}$\;
    compute error, say $err=|\|w_h^{n+1,k+1}|\| $ \;
    $k=k+1$\;
    }
    }
    
    $\psi_h^N=\psi_{r,h}^N+i\psi_{i,h}^N$\;
  \caption{Newton method with preconditioner}
  \label{alg:Newton2}
\end{algorithm}

A similar result to those in  Theorem \ref{th:infsup} holds for the modified preconditioner as presented in the following theorem.  The boundedness and the coercivity result in Theorem \ref{th:modifiedP} motivates the design of the modified preconditioner derived from the norm in \eqref{norm2}. This proposed preconditioner is employed in this paper and proved to be numerically efficient  in Section \ref{sec:numerical} although the system is non-symmetric.

\begin{theorem}\label{th:modifiedP}
 There is a positive constant $C_L$, which is independent of the mesh size $h$, such that  for any $w_h=(\bB_h, \xi_{r,h}, \xi_{i,h})$ and $\tilde v_h=(\tilde \bA_h, \tilde \psi_{r,h}, \tilde \psi_{i,h})$,
 \begin{equation}\label{ap2}
 a_L^{n+1,k+1}(w_h; \tilde v_h)\le C_L a_{\rm P}(w_h; \tilde v_h).
 \end{equation}
 For $\triangle t\lesssim  \frac{1}{1+ h^{-1}\|\psi_h^{n+1,k}\|_{\infty}+\|\psi_h^{n+1,k}\|^2_\infty+\|\bA_h^{n+1,k}\|^2_\infty}$,
there exists a positive constant $\beta_L$ independent of the mesh size $h$ such that
 \begin{equation}\label{ap3}
 a_L^{n+1,k+1}(w_h; w_h)\ge \beta_L  a_{\rm P}(w_h; w_h).
 \end{equation}
 \end{theorem}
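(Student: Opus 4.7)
The plan is to mimic the argument given for Theorem \ref{th:infsup}, decomposing $a_L^{n+1,k+1}(w_h;\tilde v_h)$ into a ``diagonal'' part matching $a_{\rm P}$ and a collection of lower-order/coupling perturbations, and then showing that the perturbations are absorbed by $a_{\rm P}$ under the stated smallness condition on $\triangle t$. For boundedness \eqref{ap2}, I would first observe that the six genuine diagonal contributions in the $a_{L,j}^{n+1,k+1}$'s are exactly the terms of $a_{\rm P}$, so they satisfy the bound trivially. The zeroth-order terms $((|\bA_h^{n+1,k}|^2+c|\psi_{*,h}^{n+1,k}|^2-1)\xi_{*,h},\tilde\psi_{*,h})$ and $(|\psi_h^{n+1,k}|^2\bB_h,\tilde\bA_h)$ are then controlled by H\"older and Assumption \ref{ass:bound} against the $L^2$ mass contributions in $a_{\rm P}$. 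The genuinely delicate coupling terms are those containing $\nabla\psi_{r,h}^{n+1,k}$ or $\nabla\psi_{i,h}^{n+1,k}$ tested against $\bB_h$ or $\xi_{*,h}$. On these I would apply the integration-by-parts identity \eqref{integrationskill} exactly as in Theorem \ref{th:infsup} to shift the gradient onto the test function, and then use the inverse inequality in the spirit of \eqref{nablaest} to extract an $h^{-1}\|\psi_h^{n+1,k}\|_{\infty}$ factor, which is what produces the $h^{-1}\|\psi_h^{n+1,k}\|_{\infty}$ in the $\triangle t$-condition.

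For coercivity \eqref{ap3}, I would set $\tilde v_h=w_h$ and first exploit the obvious antisymmetric cancellations between $a_{L,1}$ and $a_{L,2}$. Concretely, the pair $-\tfrac{1}{\kappa}(\bA_h^{n+1,k}\cdot\nabla\xi_{i,h},\xi_{r,h})$ (from $a_{L,1}$) and $+\tfrac{1}{\kappa}(\bA_h^{n+1,k}\cdot\nabla\xi_{r,h},\xi_{i,h})$ (from $a_{L,2}$) combine after integration by parts into a term involving $\nabla\cdot\bA_h^{n+1,k}$ which the inverse inequality and $\|\bA_h^{n+1,k}\|_\infty$ control. In the same way, the $\bB_h$--$\xi_{*,h}$ cross couplings involving $\nabla\psi_{*,h}^{n+1,k}$ reduce, via \eqref{integrationskill}, to terms bounded by $h^{-1}\|\psi_h^{n+1,k}\|_\infty\|\bB_h\|_0\|\xi_{*,h}\|_0$. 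After these reductions, every remaining cross term is either of mass--mass type with constant $\|\psi_h^{n+1,k}\|_\infty^2$ or $\|\bA_h^{n+1,k}\|_\infty^2$, or of the inverse-inequality type with constant $h^{-1}\|\psi_h^{n+1,k}\|_\infty$. Young's inequality with a small parameter $\epsilon$ then absorbs all of them into a fraction of $\tfrac{1}{\triangle t}\|\xi_{*,h}\|_0^2$, $\tfrac{\sigma}{\triangle t}\|\bB_h\|_0^2$, $\tfrac{1}{\kappa^2}\|\nabla\xi_{*,h}\|_0^2$ and $\|\nabla\times\bB_h\|_0^2$, exactly as in the passage from \eqref{acoer} to \eqref{eq:31}. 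The smallness condition
$$
\triangle t\lesssim \frac{1}{1+h^{-1}\|\psi_h^{n+1,k}\|_{\infty}+\|\psi_h^{n+1,k}\|^2_\infty+\|\bA_h^{n+1,k}\|^2_\infty}
$$
arises precisely to make these absorptions consistent: the $1$ accounts for the $-\|\xi_{*,h}\|_0^2$ coming from the $-1$ in the cubic terms, the $h^{-1}\|\psi_h^{n+1,k}\|_\infty$ comes from the inverse-inequality estimates of the $\nabla\psi_{*,h}^{n+1,k}$ couplings, and the quadratic terms in $\|\psi_h^{n+1,k}\|_\infty$ and $\|\bA_h^{n+1,k}\|_\infty$ arise from the zeroth-order $L^\infty$-absorptions of $\bA_h^{n+1,k}\xi_{*,h}$ and $\psi_h^{n+1,k}\bB_h$ into the mass norms.

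The main obstacle is bookkeeping: the linearization contains roughly a dozen non-symmetric coupling terms across the three unknowns, and one has to make sure that each application of Young's inequality spends only a prescribed fraction of a diagonal term that is still available. In particular, the $\xi_{r,h}$--$\xi_{i,h}$ couplings, the $\bB_h$--$\xi_{r,h}$ couplings, and the $\bB_h$--$\xi_{i,h}$ couplings each consume part of the same $\tfrac{1}{\triangle t}\|\xi_{*,h}\|_0^2$ and $\tfrac{\sigma}{\triangle t}\|\bB_h\|_0^2$ budgets, so the $\epsilon$'s must be chosen to sum to something strictly less than one. Apart from this combinatorial care, no new analytical ingredient beyond the integration by parts \eqref{integrationskill}, the inverse inequality \eqref{nablaest}, Assumption \ref{ass:bound}, and Young's inequality is required.
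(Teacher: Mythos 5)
Your overall strategy for \eqref{ap3} is the same as the paper's: set $\tilde v_h=w_h$, keep the diagonal part $a_{\rm P}(w_h;w_h)$, and absorb every cross term by Young's inequality using $\|\psi_h^{n+1,k}\|_\infty$, $\|\bA_h^{n+1,k}\|_\infty$ and the inverse estimate $\|\nabla\psi_{*,h}^{n+1,k}\|_\infty\lesssim h^{-1}\|\psi_{*,h}^{n+1,k}\|_\infty$, which is exactly where each term of the stated $\triangle t$ condition comes from; the boundedness \eqref{ap2} is, as in the paper, dispatched by Cauchy--Schwarz with a constant depending on the previous iterate. Two differences are worth noting. First, the paper does not absorb everything by Young: it first discards three complete squares, e.g.\ $(|\bA_h^{n+1,k}|^2\xi_{r,h},\xi_{r,h})+(|\psi_{r,h}^{n+1,k}|^2\bB_h,\bB_h)+(2\psi_{r,h}^{n+1,k}\bA_h^{n+1,k}\cdot\bB_h,\xi_{r,h})=\|\xi_{r,h}\bA_h^{n+1,k}+\psi_{r,h}^{n+1,k}\bB_h\|_0^2\ge 0$ and $2\|\psi_{r,h}^{n+1,k}\xi_{r,h}+\psi_{i,h}^{n+1,k}\xi_{i,h}\|_0^2\ge 0$, which removes half of each $\psi\bA\cdot\bB\,\xi$ coupling and the $\psi_r\psi_i\xi_r\xi_i$ coupling for free; your all-Young bookkeeping still closes, since the resulting constants $\|\psi_h^{n+1,k}\|_\infty\|\bA_h^{n+1,k}\|_\infty\le \tfrac12(\|\psi_h^{n+1,k}\|_\infty^2+\|\bA_h^{n+1,k}\|_\infty^2)$ are already covered by the assumed smallness of $\triangle t$, so this is cosmetic. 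Second, your proposed mechanism for the $\xi_{r,h}$--$\xi_{i,h}$ couplings is incorrect as stated: the combination $-\tfrac1\kappa(\bA_h^{n+1,k}\cdot\nabla\xi_{i,h},\xi_{r,h})+\tfrac1\kappa(\bA_h^{n+1,k}\cdot\nabla\xi_{r,h},\xi_{i,h})$ is antisymmetric and is \emph{not} a total divergence (only the symmetric combination equals $\tfrac1\kappa\int\bA_h^{n+1,k}\cdot\nabla(\xi_{r,h}\xi_{i,h})$), and in any case $\bA_h^{n+1,k}\in Q_h\subset H({\rm curl},\Omega,\R^d)$ has no globally defined divergence, so integrating by parts against it would generate inter-element normal-jump terms. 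These four terms must instead be bounded directly, e.g.\ $\bigl|\tfrac1\kappa(\bA_h^{n+1,k}\cdot\nabla\xi_{i,h},\xi_{r,h})\bigr|\le \|\bA_h^{n+1,k}\|_\infty^2\|\xi_{r,h}\|_0^2+\tfrac{1}{4\kappa^2}\|\nabla\xi_{i,h}\|_0^2$, which is what the paper does and what your generic Young step already delivers; the misstep is therefore local and does not invalidate the proof.
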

\begin{proof} 
The proof for boundedness of the bilinear form $a_L^{n+1,k+1}(\cdot;\cdot)$ is a direct result of the Cauchy-Schwarz inequality and is omitted here.  The constant $C_L$ in \eqref{ap2} depends on the solution at the previous step of Newton iteration. 

Note that 
\begin{equation} 
\begin{aligned}
&a_{L}^{n+1,k+1}(w_h; w_h)\\
&={1\over \Delta t}(\xi_{r, h}, \xi_{r, h}) 
+ \frac{1}{\kappa^2}(\nabla\xi_{r, h},\nabla \xi_{r, h}) 
+ \left((|\bA_h^{n+1,k}|^2 + 3|\psi_{r,h}^{n+1,k}|^2 + |\psi_{i,h}^{n+1,k}|^2 -1)\xi_{r, h}, \xi_{r, h}\right)
-\frac{2}{\kappa}(\bA_h^{n+1,k} \cdot\nabla\xi_{i, h}, \xi_{r, h})
\\
& 
+\frac{2}{\kappa}(\bA_h^{n+1,k} \xi_{i, h}, \nabla\xi_{r, h})
+ (4\psi_{r,h}^{n+1,k}\psi_{i,h}^{n+1,k} \xi_{i, h},\xi_{r, h})
+ \left((4\psi_{r,h}^{n+1,k}\bA_h^{n+1,k} -\frac{2}{\kappa}\nabla\psi_{i,h}^{n+1,k})\cdot \bB_h, \xi_{r, h}\right)
+\frac{2}{\kappa}(\psi_{i,h}^{n+1,k}\bB_h, \nabla\xi_{r, h})
\\
& 
+{1\over \Delta t}(\xi_{i, h},\xi_{i, h})
+\frac{1}{\kappa^2}(\nabla\xi_{i, h},\nabla\xi_{i, h}) 
+ \left((|\bA_h^{n+1,k}|^2 + |\psi_{r,h}^{n+1,k}|^2 + 3|\psi_{i,h}^{n+1,k}|^2 -1)\xi_{i, h}, \xi_{i, h}\right) 
-\frac{2}{\kappa}(\psi_{r,h}^{n+1,k} \bB_h  , \nabla\xi_{i, h}),
\\
&+ \left((4\psi_{i,h}^{n+1,k}\bA_h^{n+1,k} +2\frac{1}{\kappa}\nabla\psi_{r,h}^{n+1,k})\cdot \bB_h, \xi_{i, h}\right) 
+{\sigma\over \Delta t}(\bB_h,  \bB_h)
+(\nabla\times \bB_h,\nabla\times \bB_h)
+ \left((|\psi_{r,h}^{n+1,k}|^2 + |\psi_{i,h}^{n+1,k}|^2) \bB_h, \bB_h\right).
\end{aligned} 
\end{equation}

By the Young's inequality, we have the following estimates 
\begin{equation}
\begin{aligned}
 (|\bA_h^{n+1,k}|^2 \xi_{r, h}, \xi_{r, h})+(|\psi_{r,h}^{n+1,k}|^2 \bB_h, \bB_h)+(2\psi_{r,h}^{n+1,k}\bA_h^{n+1,k}\cdot \bB_h, \xi_{r, h}) &\ge 0; \\
  (|\bA_h^{n+1,k}|^2 \xi_{i, h}, \xi_{i, h})+(|\psi_{i,h}^{n+1,k}|^2 \bB_h, \bB_h) + (2\psi_{i,h}^{n+1,k}\bA_h^{n+1,k}\cdot \bB_h, \xi_{i, h})&\ge 0; \\
  ( 2|\psi_{r,h}^{n+1,k}|^2 \xi_{r, h}, \xi_{r, h}) + ( 2|\psi_{i,h}^{n+1,k}|^2 \xi_{i, h}, \xi_{i, h}) + (4\psi_{r,h}^{n+1,k}\psi_{i,h}^{n+1,k} \xi_{i, h},\xi_{r, h})&\ge 0.
\end{aligned}
\end{equation}
Hence we have 
\begin{equation} \label{aL2}
\begin{aligned}
&a_{L}^{n+1,k+1}(w_h; w_h)\\
&\ge{1\over \Delta t}\|\xi_{r, h}\|_0^2
+ \frac{1}{\kappa^2}\|\nabla\xi_{r, h}\|^2_0
+ \left(( |\psi_{h}^{n+1,k}|^2 -1)\xi_{r, h}, \xi_{r, h}\right)
+{1\over \Delta t}\|\xi_{i, h}\|_0^2
+\frac{1}{\kappa^2}\|\nabla\xi_{i, h}\|_0^2
+ \left((|\psi_{h}^{n+1,k}|^2 -1)\xi_{i, h}, \xi_{i, h}\right) 
\\
& 
+{\sigma\over \Delta t}\|\bB_h\|_0^2
+\|\nabla\times \bB_h\|_0^2
-\frac{2}{\kappa}(\bA_h^{n+1,k} \cdot\nabla\xi_{i, h}, \xi_{r, h})
+\frac{2}{\kappa}(\bA_h^{n+1,k} \xi_{i, h}, \nabla\xi_{r, h})
+\frac{2}{\kappa}(\psi_{i,h}^{n+1,k}\bB_h, \nabla\xi_{r, h}) 
\\
& 
-\frac{2}{\kappa}(\psi_{r,h}^{n+1,k} \bB_h  , \nabla\xi_{i, h}),
+ \left((2\psi_{i,h}^{n+1,k}\bA_h^{n+1,k} +2\frac{1}{\kappa}\nabla\psi_{r,h}^{n+1,k})\cdot \bB_h, \xi_{i, h}\right) 
+ \left((2\psi_{r,h}^{n+1,k}\bA_h^{n+1,k} -\frac{2}{\kappa}\nabla\psi_{i,h}^{n+1,k})\cdot \bB_h, \xi_{r, h}\right).
\end{aligned} 
\end{equation}
By the Young's inequality, there hold the following estimates for the last six terms in \eqref{aL2}
\begin{equation}\label{aL3}
\begin{aligned}
-\frac{2}{\kappa}(\bA_h^{n+1,k} \cdot\nabla\xi_{i, h}, \xi_{r, h}) &\ge - 2\|\bA_h^{n+1,k}\|^2_\infty\|\xi_{r, h}\|_0^2-\frac{1}{4\kappa^2}\|\nabla\xi_{i, h}\|^2_0; \\
\frac{2}{\kappa}(\bA_h^{n+1,k} \xi_{i, h}, \nabla\xi_{r, h}) &\ge - 2\|\bA_h^{n+1,k}\|^2_\infty\|\xi_{i, h}\|_0^2-\frac{1}{4\kappa^2}\|\nabla\xi_{r, h}\|^2_0;\\
\frac{2}{\kappa}(\psi_{i,h}^{n+1,k}\bB_h, \nabla\xi_{r, h}) &\ge - 2\|\psi_{i,h}^{n+1,k}\|^2_\infty\|\bB_h\|_0^2-\frac{1}{4\kappa^2}\|\nabla\xi_{r, h}\|^2_0;\\
-\frac{2}{\kappa}(\psi_{r,h}^{n+1,k}\bB_h, \nabla\xi_{i, h}) &\ge - 2\|\psi_{r,h}^{n+1,k}\|^2_\infty\|\bB_h\|_0^2-\frac{1}{4\kappa^2}\|\nabla\xi_{i, h}\|^2_0;\\
 \left((2\psi_{i,h}^{n+1,k}\bA_h^{n+1,k} +2\frac{1}{\kappa}\nabla\psi_{r,h}^{n+1,k})\cdot \bB_h, \xi_{i, h} \right)&\ge 
 - \|\psi_{i,h}^{n+1,k}\bA_h^{n+1,k} +\frac{1}{\kappa}\nabla\psi_{r,h}^{n+1,k}\|_\infty (\|\bB_h\|_0^2+\|\xi_{i, h}\|^2_0) ;\\
  \left((2\psi_{r,h}^{n+1,k}\bA_h^{n+1,k} -2\frac{1}{\kappa}\nabla\psi_{i,h}^{n+1,k})\cdot \bB_h, \xi_{r, h}\right)&\ge - \|\psi_{r,h}^{n+1,k}\bA_h^{n+1,k} -\frac{1}{\kappa}\nabla\psi_{i,h}^{n+1,k}\|_\infty (\|\bB_h\|_0^2+\|\xi_{r, h}\|^2_0).\\
  \end{aligned}
\end{equation} 
A combination of \eqref{aL2} and  \eqref{aL3} yields 
\begin{equation} \label{aL4}
\begin{aligned}
&a_{L}^{n+1,k+1}(w_h; w_h)\\
&\ge\left({1\over \Delta t}-2\|\bA_h^{n+1,k}\|^2_\infty-\|\psi_{r,h}^{n+1,k}\bA_h^{n+1,k} -\frac{1}{\kappa}\nabla\psi_{i,h}^{n+1,k}\|_\infty \right)\|\xi_{r, h}\|_0^2
+ \frac{1}{2\kappa^2}\|\nabla\xi_{r, h}\|^2_0
+ \left(( |\psi_{h}^{n+1,k}|^2 -1)\xi_{r, h}, \xi_{r, h}\right)\\
&
+\left({1\over \Delta t} -2\|\bA_h^{n+1,k}\|^2_\infty - \|\psi_{i,h}^{n+1,k}\bA_h^{n+1,k} +\frac{1}{\kappa}\nabla\psi_{r,h}^{n+1,k}\|_\infty \right)\|\xi_{i, h}\|_0^2
+\frac{1}{2\kappa^2}\|\nabla\xi_{i, h}\|_0^2
+ \left((|\psi_{h}^{n+1,k}|^2 -1)\xi_{i, h}, \xi_{i, h}\right) +\|\nabla\times \bB_h\|_0^2
\\
& 
+\left({\sigma\over \Delta t}- 2\|\psi_{i,h}^{n+1,k}\|^2_\infty- 2\|\psi_{r,h}^{n+1,k}\|^2_\infty- \|\psi_{i,h}^{n+1,k}\bA_h^{n+1,k} +\frac{1}{\kappa}\nabla\psi_{r,h}^{n+1,k}\|_\infty -\|\psi_{r,h}^{n+1,k}\bA_h^{n+1,k} -\frac{1}{\kappa}\nabla\psi_{i,h}^{n+1,k}\|_\infty \right)\|\bB_h\|_0^2.
\end{aligned} 
\end{equation}
It follows from $\|\psi_{r,h}^{n+1,k}\bA_h^{n+1,k} -\frac{1}{\kappa}\nabla\psi_{i,h}^{n+1,k}\|_\infty\le \|\psi_{r,h}^{n+1,k}\|_\infty\|\bA_h^{n+1,k}\|_\infty + \frac{1}{\kappa}\|\nabla\psi_{i,h}^{n+1,k}\|_\infty$ and the inverse inequality that the coercivity \eqref{ap3} holds if
\begin{equation}
\triangle t
\lesssim \frac{1}{1+\|\frac{i}{\kappa}\nabla \psi_h^{n+1,k}+\bA_h^{n+1,k}\psi_h^{n+1,k}\|_{\infty}+\|\psi_h^{n+1,k}\|^2_\infty+\|\bA_h^{n+1,k}\|^2_\infty}
\lesssim \frac{1}{1+ h^{-1}\|\psi_h^{n+1,k}\|_{\infty}+\|\psi_h^{n+1,k}\|^2_\infty+\|\bA_h^{n+1,k}\|^2_\infty},
\end{equation}
which completes the proof. 
\end{proof}

Compared to the preconditioner $\tilde P^{n+1,k+1}$ in \eqref{pdef1}, the preconditioner $P$ stays the same for different time step and Newton iteration, so only needs to be assembled once. Although the preconditioner $\tilde P^{n+1,k+1}$ decouples the variable $\xi_h$ and $\bB_h$, the real part and the imaginary part of the complex variable $\xi_h$ is still coupled, while the preconditioner $P$ decouples all the three variables $\xi_{r,h}, \xi_{i,h}$ and $\bB_h$, which leads to an even smaller computational cost.

\section{Numerical Examples}\label{sec:numerical}

In this section, we present some numerical examples on the vortex motion simulations with different geometrics to show the efficiency and robustness of our new scheme and preconditioner under the temporal gauge. The modified preconditioner $P$ in \eqref{pdef2} is employed for all the simulations in this section.

\subsection{Example 1}
Consider the following artificial example on $\Omega=(0,1)^2$ with $\kappa=1$
\begin{equation}\label{artificial} 
\left\{
\begin{aligned} 
\partial_{t} \psi&= -\left(\frac{i}{\kappa} \nabla + \boldsymbol{A}\right)^{2} \psi+\psi-|\psi|^{2} \psi + g& \text { in } \Omega,
\\
\partial_{t} \boldsymbol{A}&=\frac{1}{2 i \kappa }(\psi^* \nabla \psi-\psi \nabla \psi^*)-|\psi|^{2} \boldsymbol{A}-\nabla \times \nabla \times \boldsymbol{A} + f& \text { in } \Omega,
\end{aligned} 
\right.
\end{equation} 
where the boundary conditions are
\begin{equation}\label{artificialbd}  
(\nabla \times \boldsymbol{A})\times \boldsymbol{n}=\boldsymbol{H}_{0}\times \boldsymbol{n},\quad (\frac{i}{\kappa} \nabla + \boldsymbol{A}) \psi\cdot \boldsymbol{n} =0,
\end{equation} 
and initial conditions are
\begin{equation}\label{artificialic}  
\psi(x,0) = \psi_0(x),\quad \boldsymbol{A}(x,0)=\boldsymbol{A}_0(x).
\end{equation} 
The functions $f$, $g$, $\psi_0$ and $\boldsymbol{A}_0$ are chosen corresponding to the exact solution
\[
\psi = e^{-t}(\cos(2\pi x) + i\cos (\pi y)),\quad \boldsymbol{A}=[e^{t-y}\sin (\pi x),\ e^{t-x}\sin(2\pi y)]^T
\]
with 
$
\boldsymbol{H}_{0}= - e^{t-x}\sin (2\pi y) + e^{t-y}\sin (\pi x).
$
We set the terminal time $T=1$ in this example. The initial mesh $\mathcal{T}_1$ consists of two right triangles, obtained by cutting the unit square with a north-east line. Each mesh $\mathcal{T}_i$ is refined into a half-sized mesh uniformly, to get a higher level mesh $\mathcal{T}_{i+1}$.

\begin{table}[htbp]
  \centering 
    \begin{tabular}{c|cc|cc|cc|cc}
\hline
    M      &   $\|\bA-\bA_h\|_{H(\rm curl)}$    & rate&  $\|\psi_r-\psi_{r,h}\|_1$ &rate   &  $\|\psi_i-\psi_{i,h}\|_1$&rate&$\||\psi_h|^2-|\psi|^2\|_0$&rate \\\hline  
2   & 1.38E+00 &      & 1.55E+00 &      & 8.33E-01 &      & 2.71E-01 &      \\\hline
4   & 8.48E-01 & 0.70 & 8.73E-01 & 0.83 & 3.70E-01 & 1.17 & 1.07E-01 & 1.34 \\\hline
8   & 4.39E-01 & 0.95 & 3.17E-01 & 1.46 & 1.32E-01 & 1.49 & 4.56E-02 & 1.23 \\\hline
16  & 2.25E-01 & 0.97 & 1.29E-01 & 1.30 & 6.04E-02 & 1.12 & 1.99E-02 & 1.20 \\\hline
32  & 1.14E-01 & 0.98 & 5.76E-02 & 1.16 & 3.03E-02 & 0.99 & 9.18E-03 & 1.11 \\\hline
64  & 5.72E-02 & 0.99 & 2.72E-02 & 1.08 & 1.53E-02 & 0.98 & 4.40E-03 & 1.06 \\\hline
128 & 2.87E-02 & 1.00 & 1.32E-02 & 1.04 & 7.72E-03 & 0.99 & 2.16E-03 & 1.03 \\\hline
256 & 1.44E-02 & 1.00 & 6.49E-03 & 1.02 & 3.88E-03 & 0.99 & 1.07E-03 & 1.02 \\ \hline
\end{tabular}
\caption{\footnotesize Convergence rate of the nonlinear formulation \eqref{Eulerh2} at $T=1$ with~$\triangle t=1/M$ for Example 1.}
\label{tab:artificial}
\end{table}

We solve the artificial problem \eqref{artificial}   on these uniform triangulations with time step $\triangle t=1/M$, and $M$ is the number of elements on unit length edge. Table~\ref{tab:artificial} lists the errors  at $T=1$. It shows that the convergence rate of $\|\bA-\bA_h\|_{H(curl)}$ and $\|\psi-\psi_h\|_1$ is 1.00. Table~\ref{tab:compare1} compares the average Newton iteration number $N_n$ per time step and the average iteration number for each Krylov iteration on meshes per Newton iteration. Here~$N_{p}$ represents the average Krylov iteration number when the preconditioner in \eqref{pdef2} is employed and $N_{np}$ is the average Krylov iteration number without any preconditioner. Note that the Newton iteration number in Table~\ref{tab:compare1} decreases along as the mesh size.  The reason is that when the mesh is refiner, the discrete solution at the previous time step turns to be a better approximation to the solution at the current time step, namely a better initial guess for the Newton iteration. Thus, only two Newton iteration steps are required for each time step.  

The Krylov iteration number when  no preconditioner is employed increases quickly when the mesh size decreases, which will leads to an unbearable computational cost.
The average Krylov iteration number with the prosed preconditioner in Table~\ref{tab:compare1} does not depend on the mesh size. This behavior indicates the uniform efficiency of the preconditioner and implies a remarkable improvement on the computation speed when it comes to large scale simulations. 

\begin{table}[htbp]
  \centering
    \begin{tabular}{l|ccccccc}
\hline
    M  & 2    & 4    &8    & 16    & 32     & 64    & 128  \\\hline
    $N_n$ &  5.50  & 4.25  & 2.88  & 2.00  & 2.00  & 2.00  & 2.00 \\\hline
    $N_p$ & 18.73 & 15.82 & 11.57 & 9.19  & 7.92  & 6.70  & 6.00 \\\hline
    $N_{np}$ & 88.55 & 254.12 & 207.78 & 309.22 & 431.19 &  -  &  - \\\hline
    \end{tabular}%
  \caption{\footnotesize Comparison of average iterations  with $\triangle t=1/M$ for Example 1.}
  \label{tab:compare1}%
\end{table}

\subsection{Example 2: Unit square superconductor}
We simulate the vortex dynamics \eqref{model0temporal} on a unit square domain $\Omega=(0,1)^2$ with $\kappa=10$ and initial conditions
\begin{equation} 
\psi(x,0) = 0.6+0.8i,\quad \boldsymbol{A}(x,0)=(0,0), \quad H=5.
\end{equation} 
This example was tested before in \cite{chen2001adaptive,yang2014linearized,li2015new,gao2017efficient,mu1997linearized,gao2014optimal}. We triangulate the domain into uniform right triangles with $M$ points on each side, and solve the equations  with the time step size $\triangle t=1/M$. 

\begin{table}[htbp]
  \centering
    \begin{tabular}{l|ccccccc}
\hline
    M  & 2    & 4    &8    & 16    & 32     & 64  \\\hline
    $N_n$ & 1.25  & 1.19  & 1.18  & 1.10  & 1.05  & 1.03 \\\hline
    $N_p$ & 3.62  & 4.84  & 4.78  & 4.11  & 3.03  & 2.41 \\\hline
    $N_{np}$ & 7.78  & 26.83  & 59.59 & 94.89  & 105.50  & 105.54 \\\hline 
    \end{tabular}%
  \caption{\footnotesize Comparison of average iterations  with $\triangle t=1/M$ for Example 2.}
  \label{tab:compare2}%
\end{table}%

Table~\ref{tab:compare2} records the average Newton iteration number $N_n$ per time step and the average Krylov iteration number $N_p$ and $N_{np}$ per Newton step in Example 2.   The comparison of the average iteration numbers in Table~\ref{tab:compare2} verifies the efficiency of the proposed preconditioner, which will significantly speed up large scale simulations.
Figure~\ref{fig:square} plots the value of $|\psi|^2$ and $\nabla\times \bA$ at different time levels on the mesh $M=16$, which is similar to those reported in \cite{li2015new,gao2017efficient}.
\begin{figure}[!ht]
\centering
\subfloat[$|\psi|^2$ at $T=2$, $6$, $10$, $15$ and $20$]{%
\centering
\includegraphics[width = 1.2in]{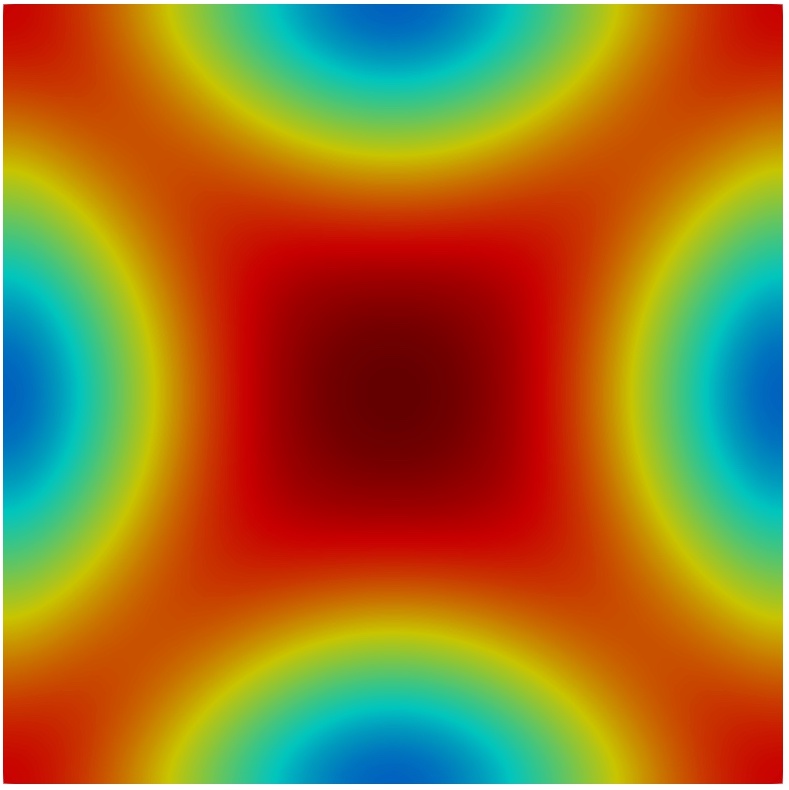}
\includegraphics[width = 1.2in]{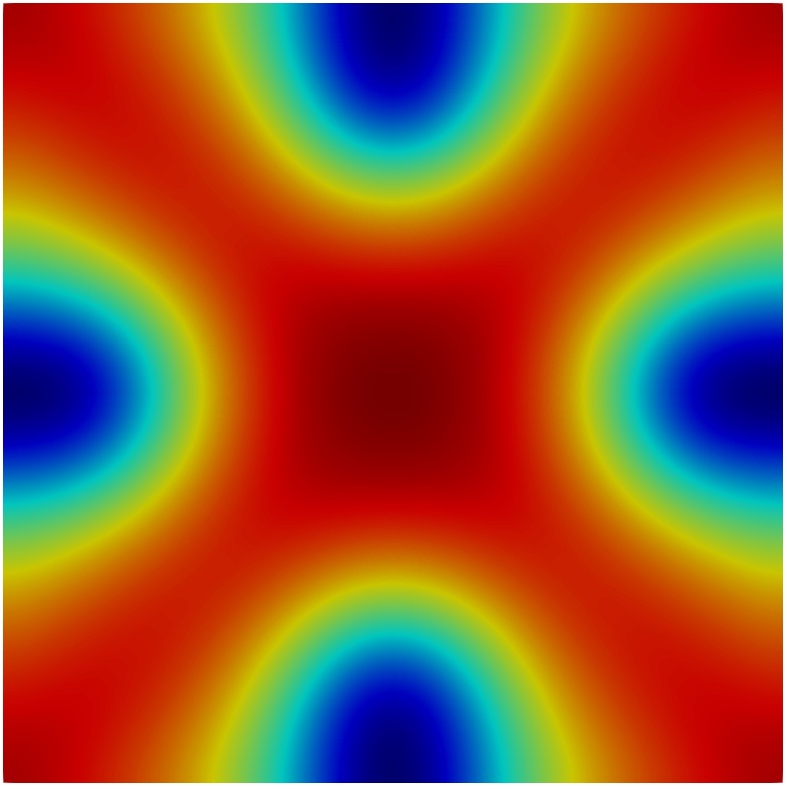}
\includegraphics[width = 1.2in]{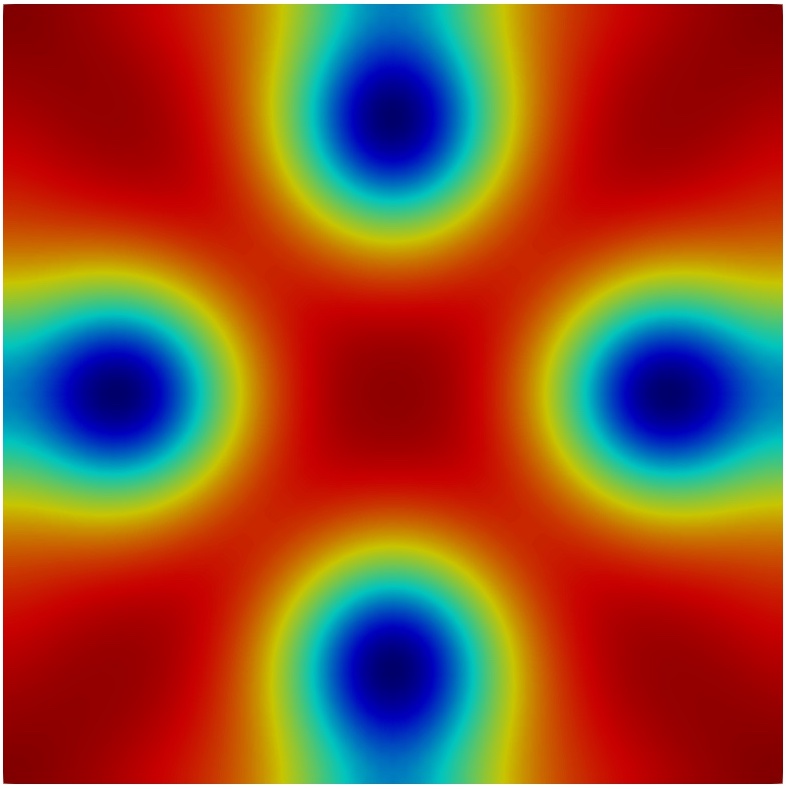}
\includegraphics[width = 1.2in]{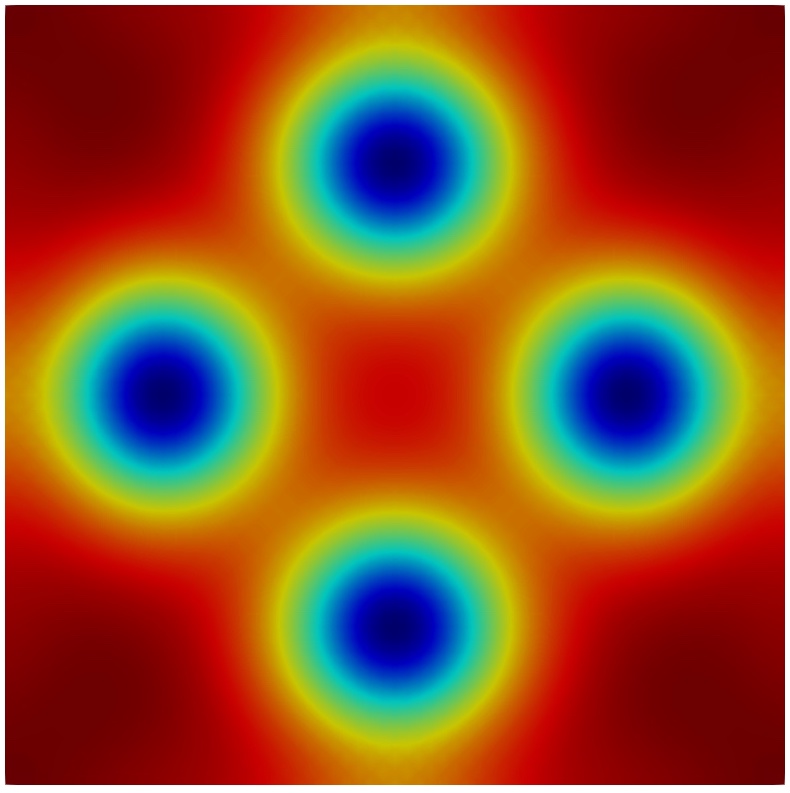}
\includegraphics[width = 1.2in]{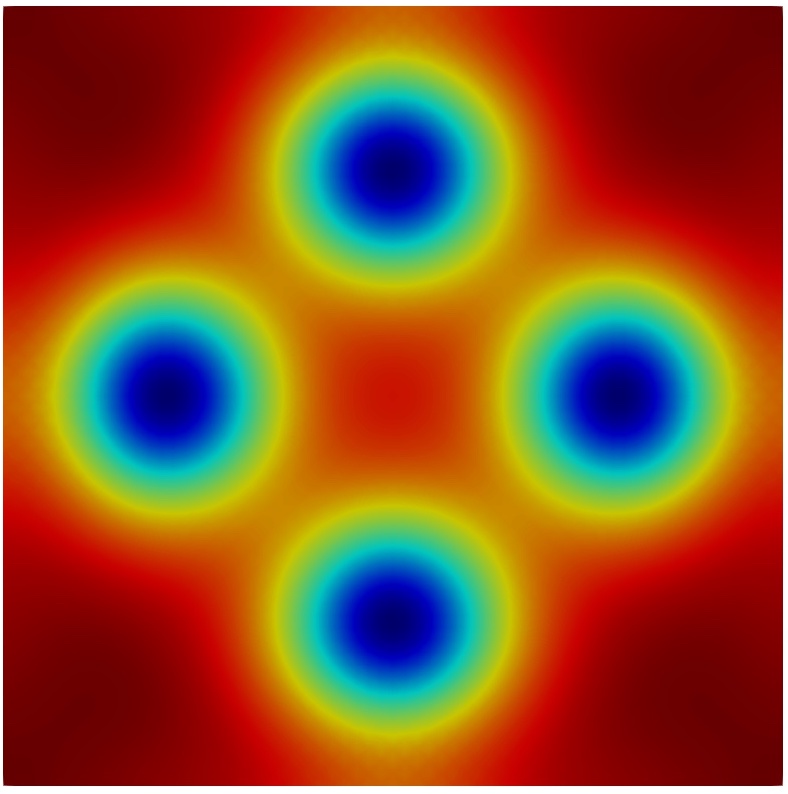}
}

\subfloat[$\nabla\times \bA$ at $T=2$, $6$, $10$, $15$ and $20$]{%
\centering
	\includegraphics[width = 1.2in]{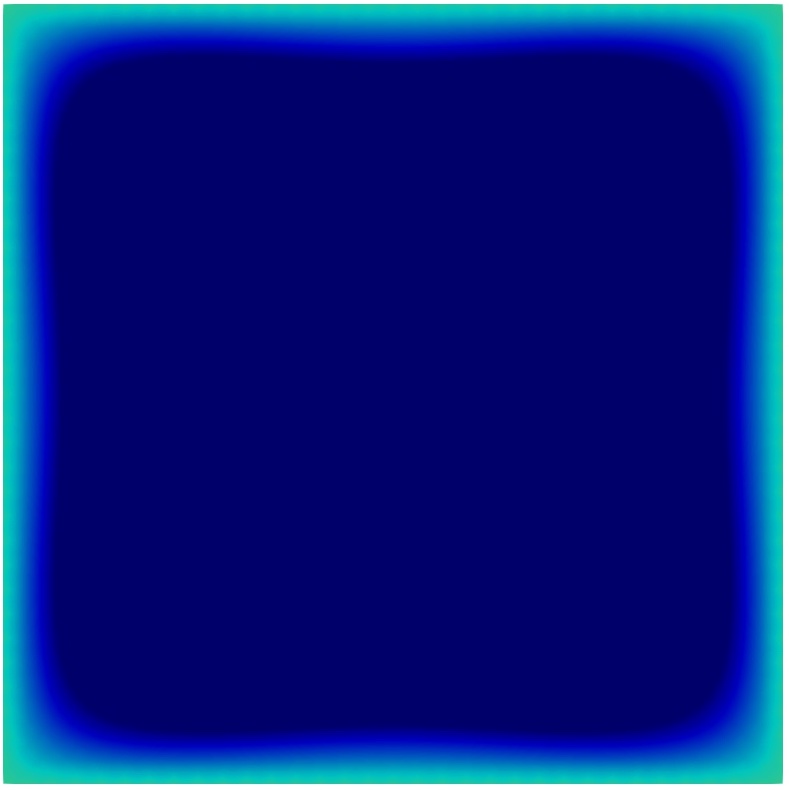}
	\includegraphics[width = 1.2in]{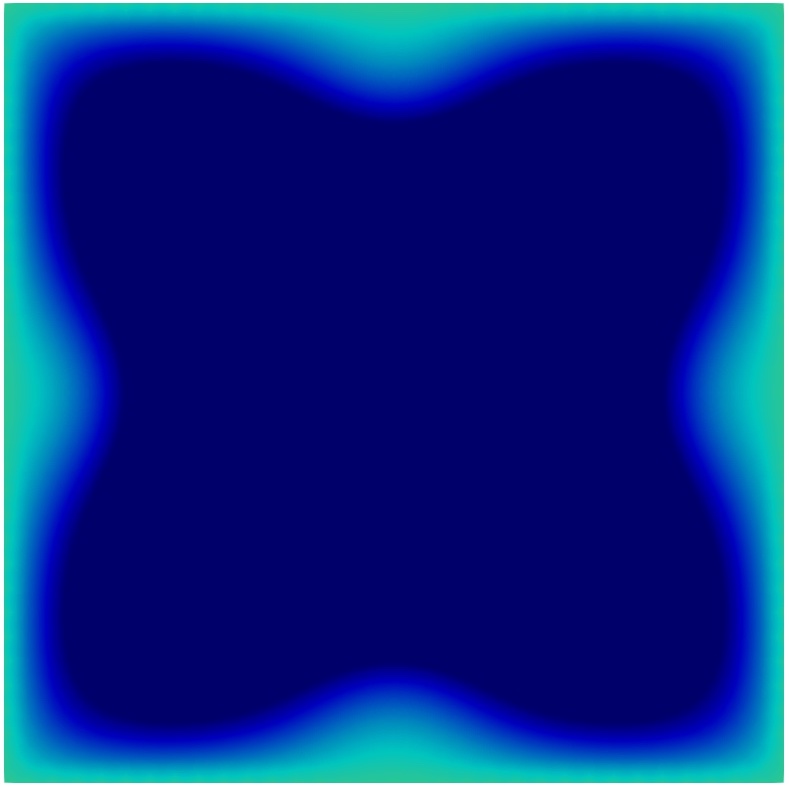}
	\includegraphics[width = 1.2in]{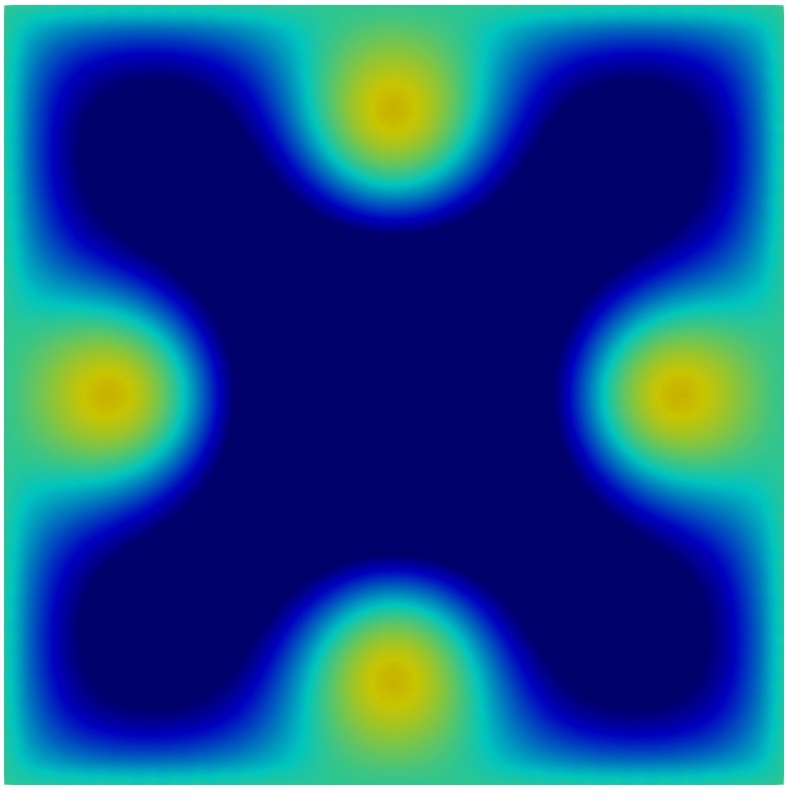}
	\includegraphics[width = 1.2in]{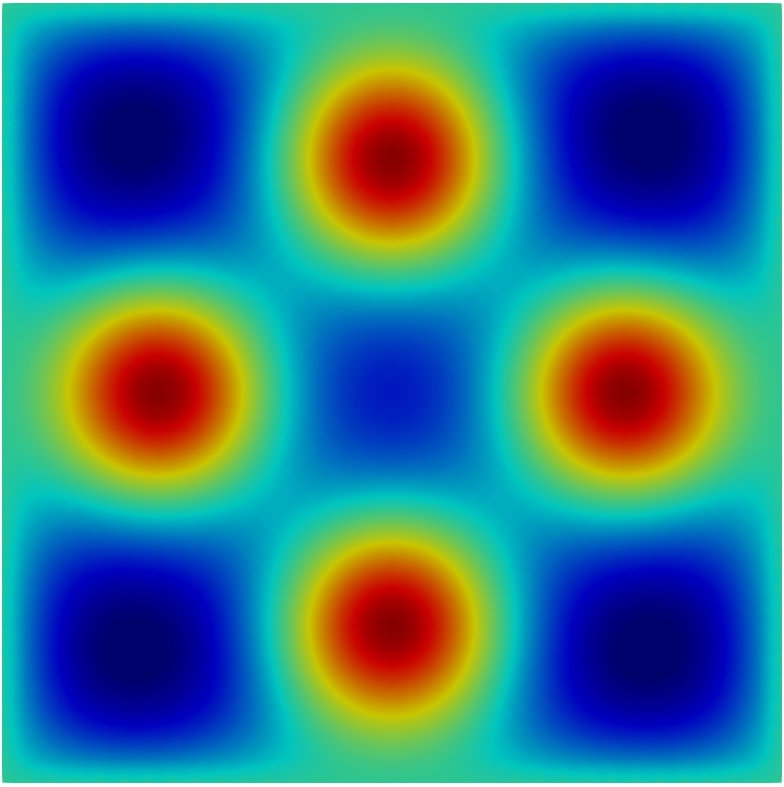}
	\includegraphics[width = 1.2in]{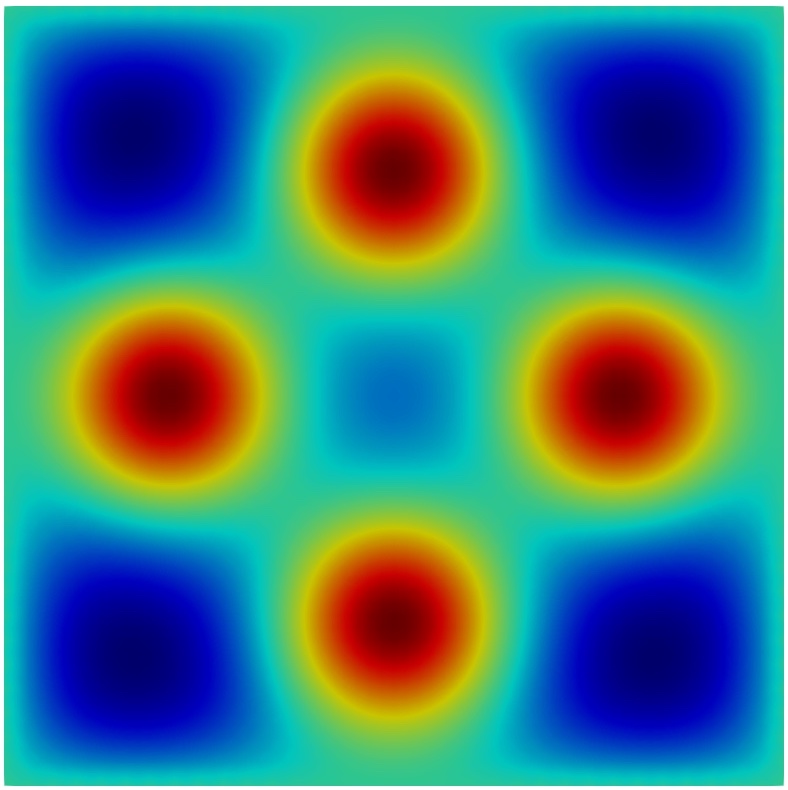}
}

\caption{\footnotesize $|\psi|^2$ and $\nabla\times \bA$ on the unit square domain with $M$=16.}
\label{fig:square}
\end{figure} 

\subsection{Example 3: L-shaped superconductor} 
We use the prosed formulation and preconditioner to simulate the vortex dynamics in an L-shaped superconductor $\Omega=(0,1)^2\backslash [0.5, 1]\times [0,0.5]$ with the Ginzburg-Landau parameter $\kappa=10$. The initial conditions and applied magnetic field are
\begin{equation} 
\psi(x,0) = 0.6+0.8i,\quad \boldsymbol{A}(x,0)=(0,0), \quad H=5.
\end{equation} 
This example was tested before by different methods, see \cite{gao2017efficient,li2015new} for reference. The L-shaped domain is triangulated quasi-uniformly with $M$ nodes per unit length on each side, where Figure \ref{fig:Lshape} plots the case with~$M=16$. 

\begin{figure}[!ht]
\centering
	\includegraphics[width = 1.5in]{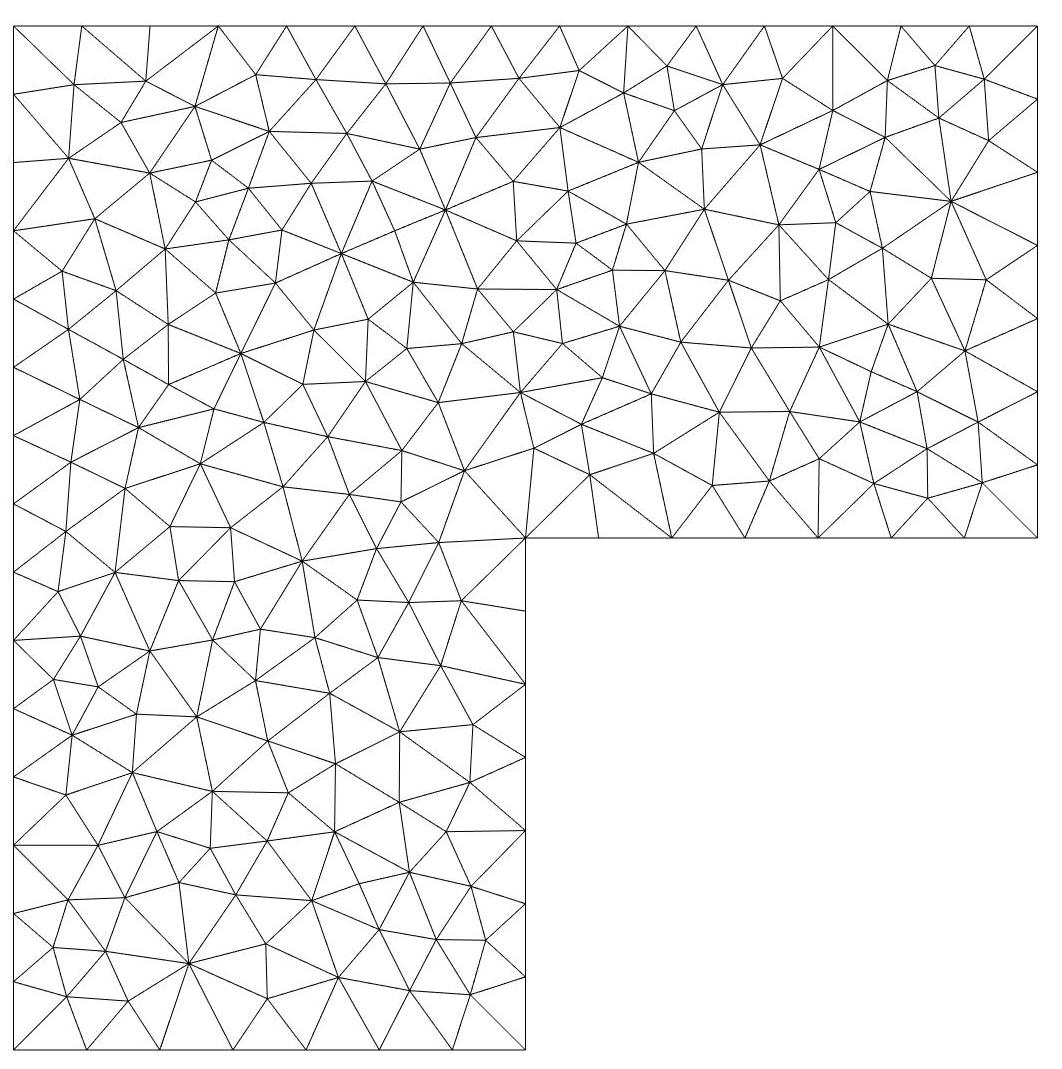}
	\caption{\footnotesize Quasi-uniform triangulation of the L-shape domain with $M$=16.}
	\label{fig:Lshape}
\end{figure}

Table \ref{tab:compare3} records the Newton iteration number per time step and the Krylov iteration number per Newton step,
which implies the uniform efficiency of the proposed preconditioner.

\begin{table}[htbp]
  \centering
    \begin{tabular}{l|cccccc}
\hline
    M  &  4    &8    & 16    & 32     & 64   \\\hline
    $N_n$ & 1.08  & 1.28  & 1.04  & 1.02  & 1.01 \\\hline
    $N_p$ & 3.91  & 5.11  & 3.55  & 2.70  & 1.98\\\hline
    $N_{np}$ & 16.31  & 64.28  & 83.14  & 98.58  & 131.47\\\hline 
    \end{tabular}%
  \caption{\footnotesize Comparison of average iterations  with $\triangle t=1/M$ for Example 3.}
  \label{tab:compare3}%
\end{table}%

\begin{figure}[!ht]
\centering
\subfloat[$|\psi|^2$  at $T=5$, $10$, $25$ and $40$  ]{%
\centering
\includegraphics[width = 1.2in]{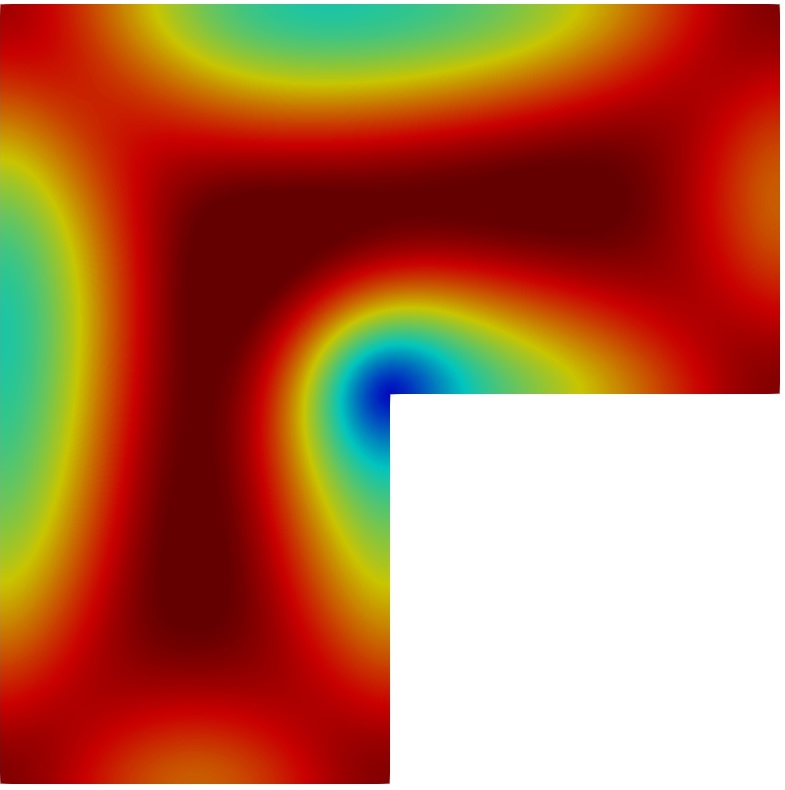}
\includegraphics[width = 1.2in]{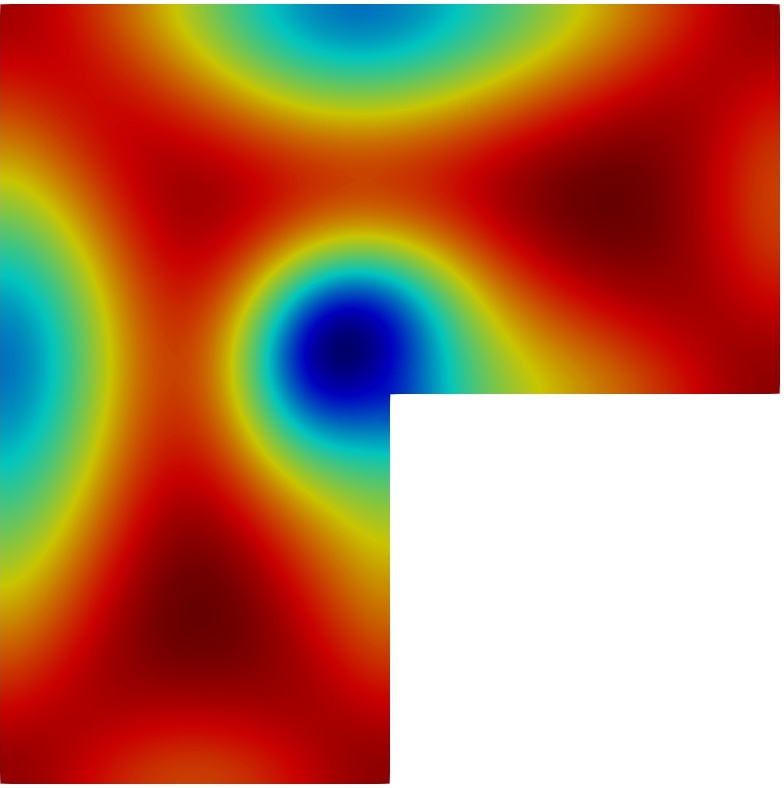}
\includegraphics[width = 1.2in]{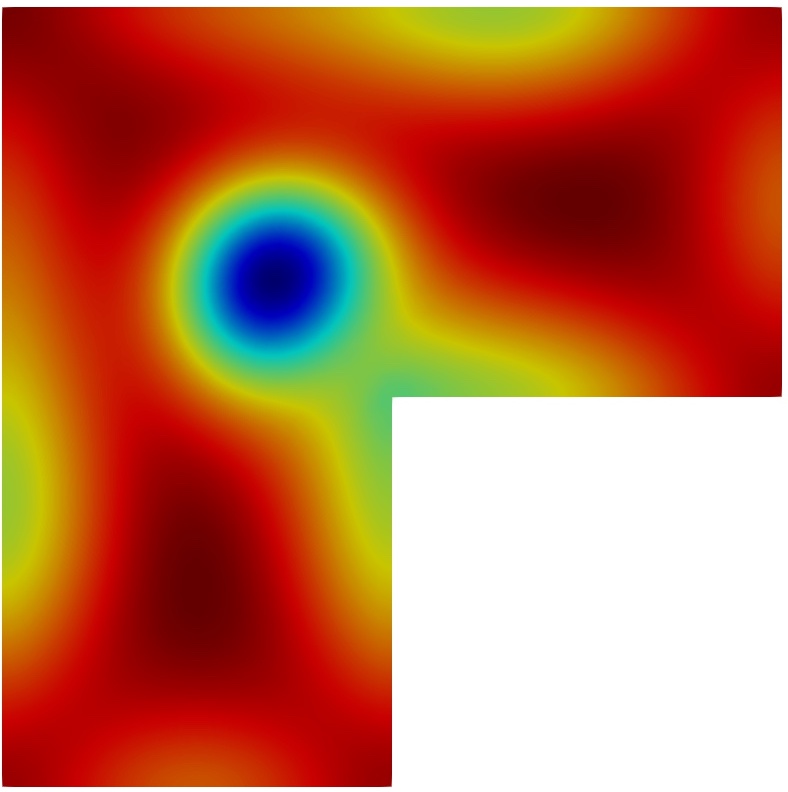}
\includegraphics[width = 1.2in]{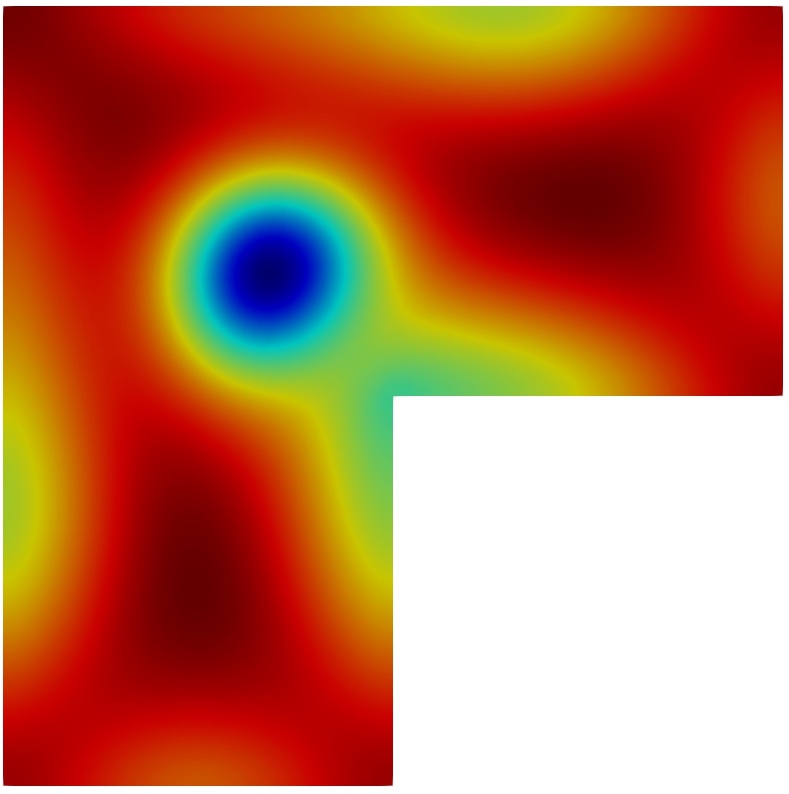} 
}

\subfloat[$\nabla\times \bA$  at $T=5$, $10$, $25$ and $40$  ]{%
\centering
	\includegraphics[width = 1.2in]{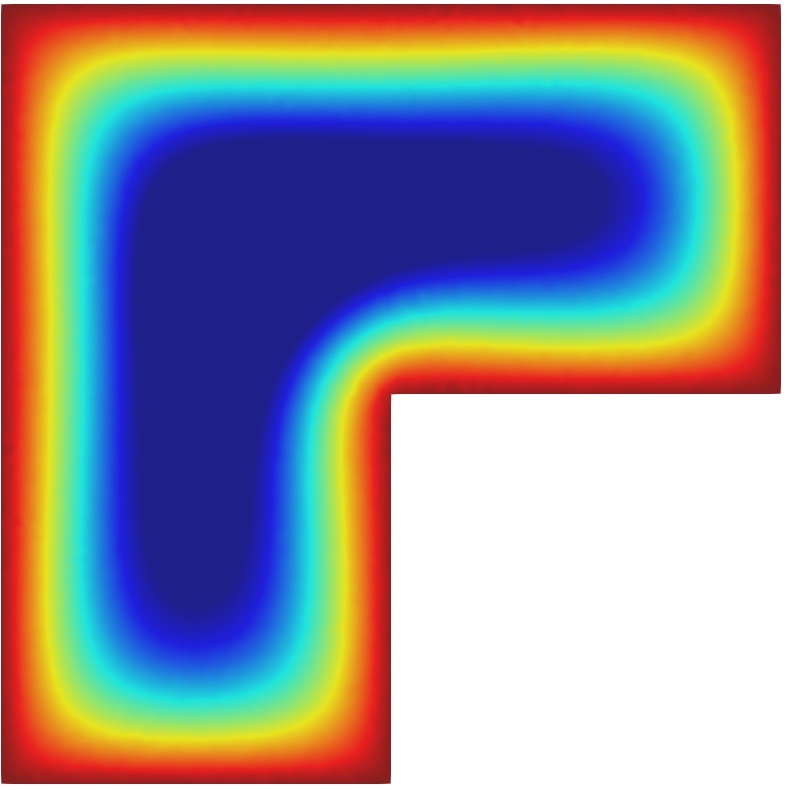}
	\includegraphics[width = 1.2in]{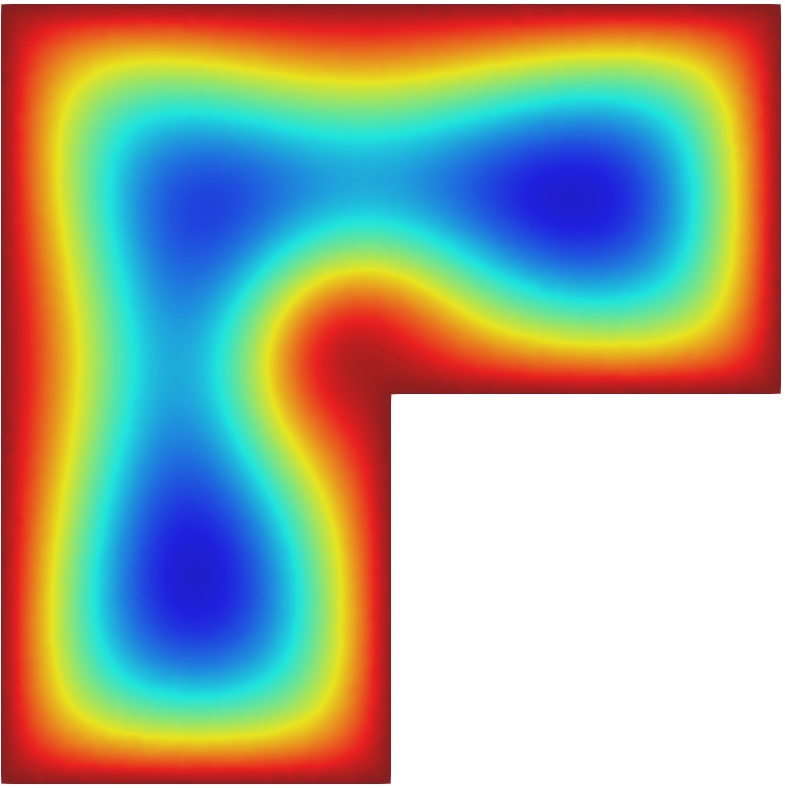}
	\includegraphics[width = 1.2in]{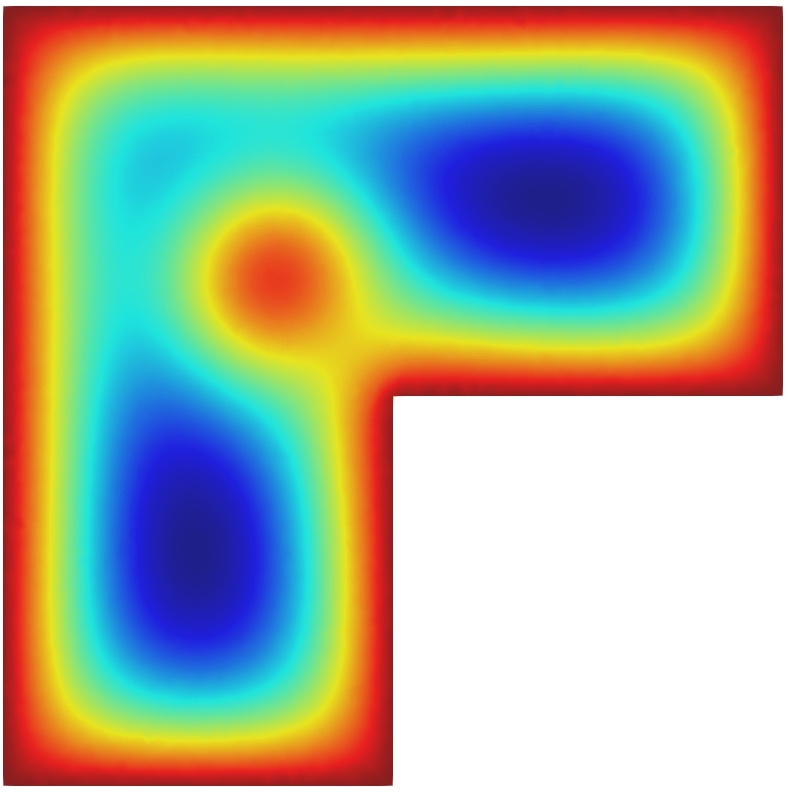}
	\includegraphics[width = 1.2in]{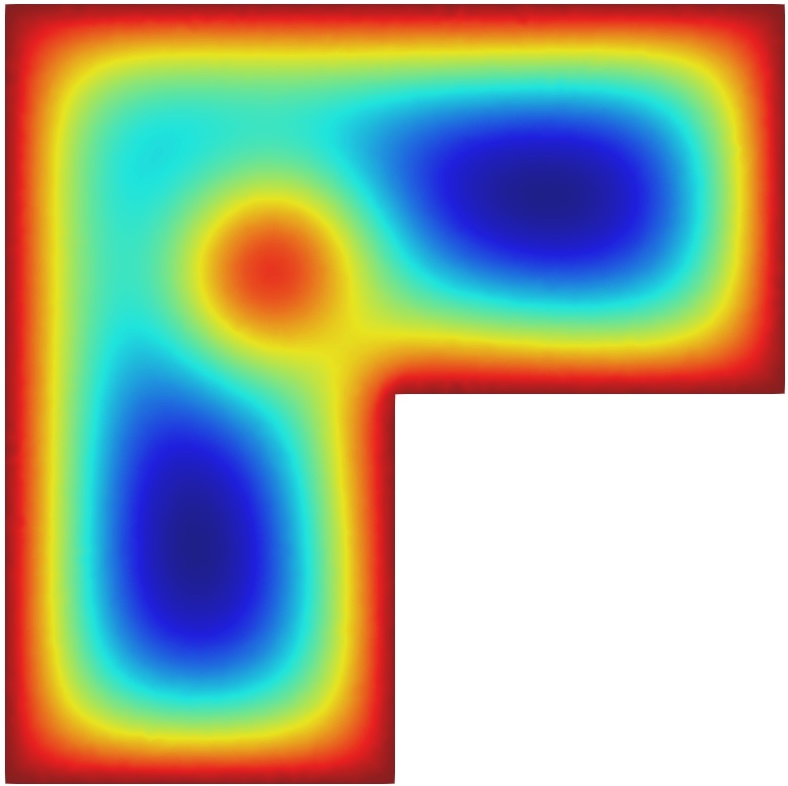} 
}

\caption{\footnotesize $|\psi|^2$ and $\nabla\times \bA$ on L-shaped domain with $M$=16.}
\label{fig:L}
\end{figure} 
Figure \ref{fig:L}  plots the value of $|\psi|^2$ at $T=5$, $20$ and $T=40$ by the new proposed method with $\triangle t=1/M$. As showed in Figure~\ref{fig:L}, one vortex enters the material from the re-entrant corner as the time increases, which is similar to those reported in \cite{gao2017efficient,li2015new}.  It was reported in \cite{gao2017efficient,li2015new} that  the conventional finite element method in $H^1(\Omega)$ for solving the Ginzburg-Landau equations under temporal gauge is unstable with respect to the mesh size. To be specific, this conventional method with $M=16$ and $32$ gives a nonphysical simulation when $T=40$, but the one with $M=64$ exhibits the correct phenomenon.

Figure~\ref{fig:L} shows that the numerical solution of the proposed approach on the mesh $M=16$, and the simulations on the meshes $M=32$ and $M=64$ are similar to those in Figure~\ref{fig:L}. This implies that the new approach is stable and correct. The reason why the proposed approach works while the conventional one does not is that the true solution $\bA$ of this problem is not $H^1$ any more. The conventional finite element solves $\bA$ in a finite dimensional $H^1$ space, thus only gives an approximation to a projection of $\bA$, not an approximation to $\bA$, and leads to the unstable behavior.

\subsection{Example 4} 
We present simulations of vortex dynamics of a type II superconductor in a square domain with four square holes. 
\begin{figure}[!ht]
\begin{center}
\begin{tikzpicture}[xscale=0.3,yscale=0.3]
\draw[-] (0,0) -- (10,0);
\draw[-] (0,0) -- (0,10);
\draw[-] (0,10) -- (10,10);
\draw[-] (10,0) -- (10,10);
\draw[-] (2,2) -- (3,2);
\draw[-] (2,2) -- (2,3);
\draw[-] (2,3) -- (3,3);
\draw[-] (3,2) -- (3,3);

\draw[-] (2,7) -- (3,7);
\draw[-] (2,7) -- (2,8);
\draw[-] (2,8) -- (3,8);
\draw[-] (3,7) -- (3,8);

\draw[-] (7,2) -- (8,2);
\draw[-] (7,2) -- (7,3);
\draw[-] (7,3) -- (8,3);
\draw[-] (8,2) -- (8,3);

\draw[-] (7,7) -- (8,7);
\draw[-] (7,7) -- (7,8);
\draw[-] (7,8) -- (8,8);
\draw[-] (8,7) -- (8,8);

\node[left] at (0,2) {2};
\node[left] at (0,3) {3};
\node[left] at (0,1) {1};
\node[left] at (0,4) {4}; 
\node[left] at (0,5) {5};
\node[left] at (0,6) {6};
\node[left] at (0,7) {7};
\node[left] at (0,8) {8}; 
\node[left] at (0,9) {9}; 
\node[left] at (0,10) {10}; 

\draw[fill] (1,0) circle [radius=0.05];
\draw[fill] (2,0) circle [radius=0.05];
\draw[fill] (3,0) circle [radius=0.05];
\draw[fill] (4,0) circle [radius=0.05];
\draw[fill] (5,0) circle [radius=0.05];
\draw[fill] (6,0) circle [radius=0.05];
\draw[fill] (7,0) circle [radius=0.05];
\draw[fill] (8,0) circle [radius=0.05];
\draw[fill] (9,0) circle [radius=0.05];
\draw[fill] (0,1) circle [radius=0.05];
\draw[fill] (0,2) circle [radius=0.05];
\draw[fill] (0,3) circle [radius=0.05];
\draw[fill] (0,4) circle [radius=0.05];
\draw[fill] (0,5) circle [radius=0.05];
\draw[fill] (0,6) circle [radius=0.05];
\draw[fill] (0,7) circle [radius=0.05];
\draw[fill] (0,8) circle [radius=0.05];
\draw[fill] (0,9) circle [radius=0.05];
\node[below] at (1,0) {1};
\node[below] at (2,0) {2};
\node[below] at (3,0) {3};
\node[below] at (4,0) {4};
\node[below] at (5,0) {5};
\node[below] at (6,0) {6}; 
\node[below] at (7,0) {7};
\node[below] at (8,0) {8};
\node[below] at (9,0) {9};
\node[below] at (10,0) {10}; 
\node[below,left] at (0,0) {0}; 
\end{tikzpicture}
\caption{A square with four holes for Example 4.}
\label{fig:hole}
\end{center}
\end{figure}
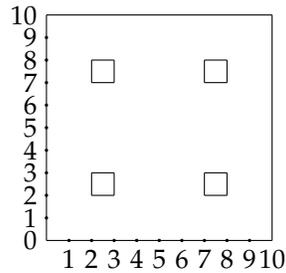

We set 
$$
\sigma=1,\quad  \kappa =4,\quad  \psi(x,0)=1.0,\quad \boldsymbol{A}(x,0)=(0,0),
$$
and test on three different external magnetic fields, namely $\bH=0.8$ and $1.1$.  The example was tested before in \cite{gao2016new,peng2014vortex}. 

\begin{figure}[!ht]
\centering
\subfloat[$|\psi|^2$  at $T=10$, $20$, $50$, $300$ and $500$ ]{%
\centering
\includegraphics[width = 1.2in]{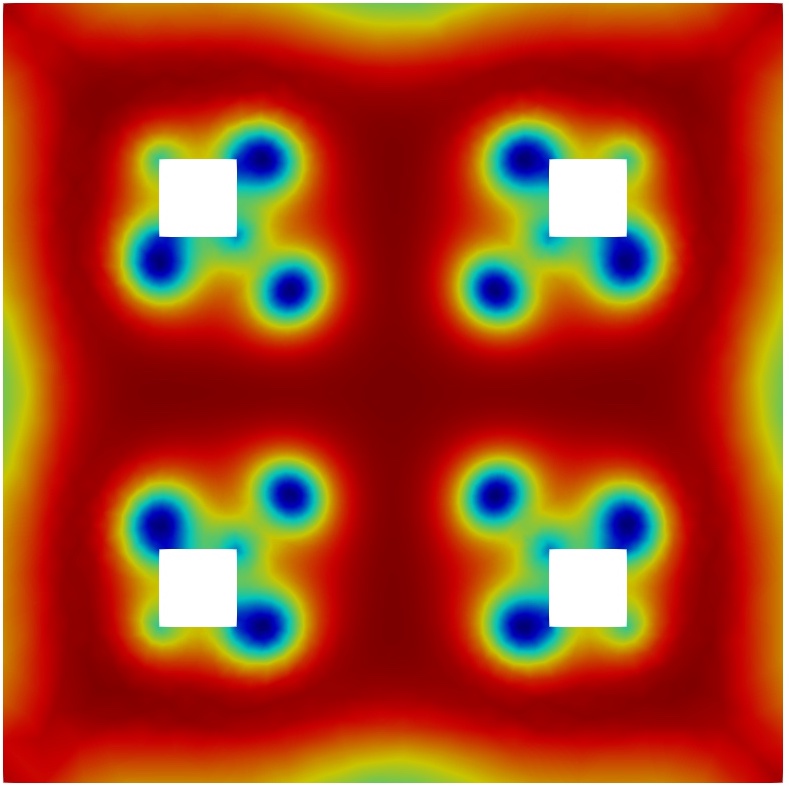}
\includegraphics[width = 1.2in]{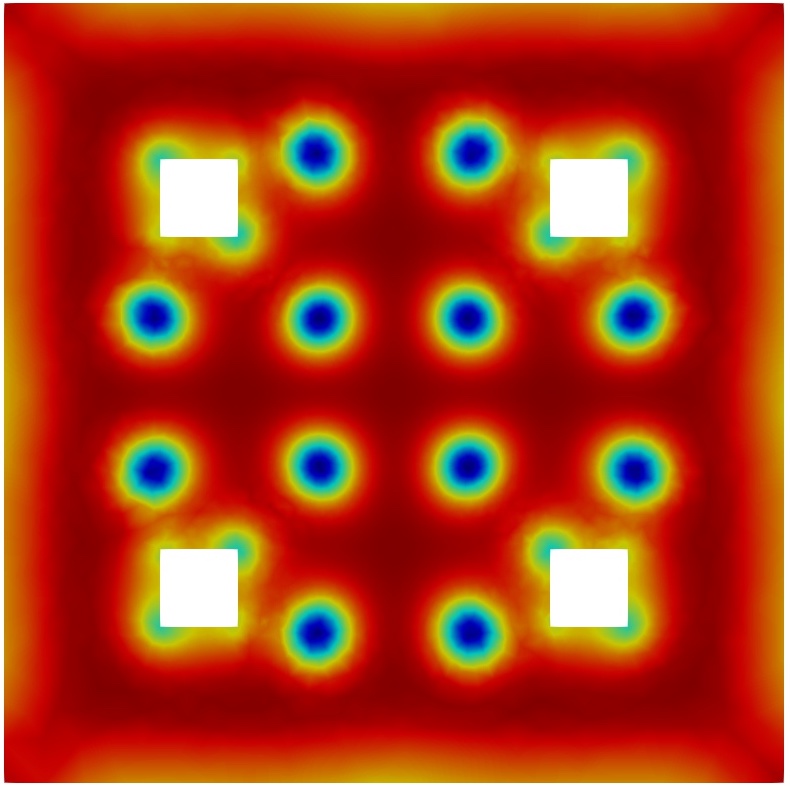}
\includegraphics[width = 1.2in]{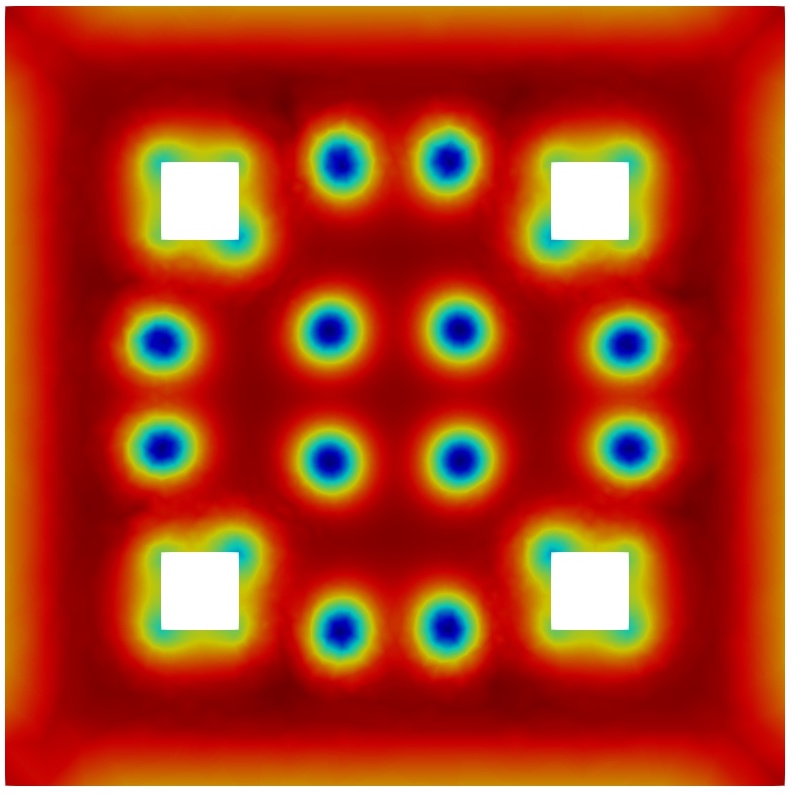}
\includegraphics[width = 1.2in]{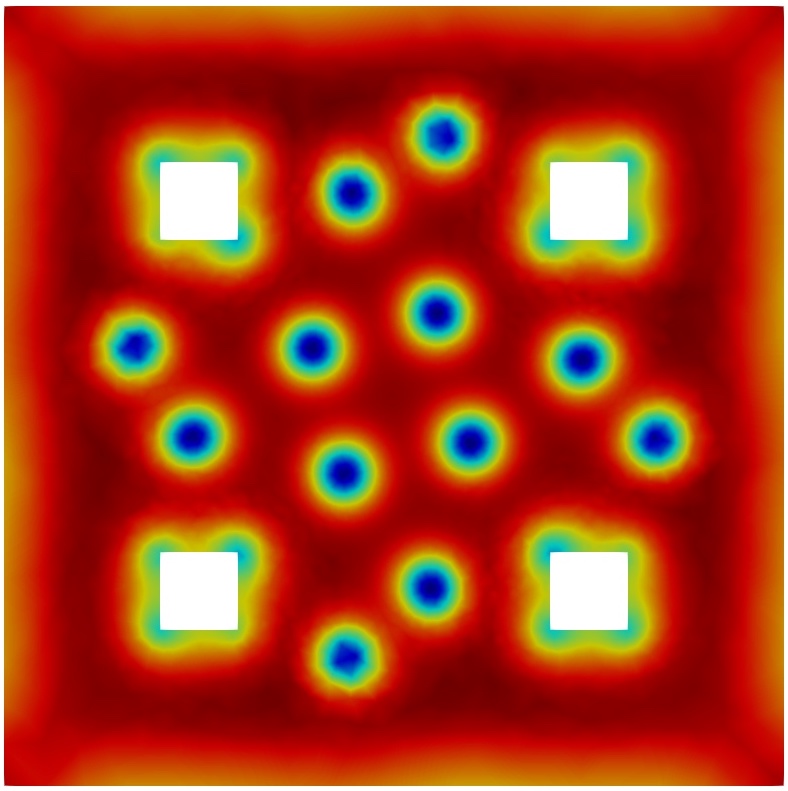} 
\includegraphics[width = 1.2in]{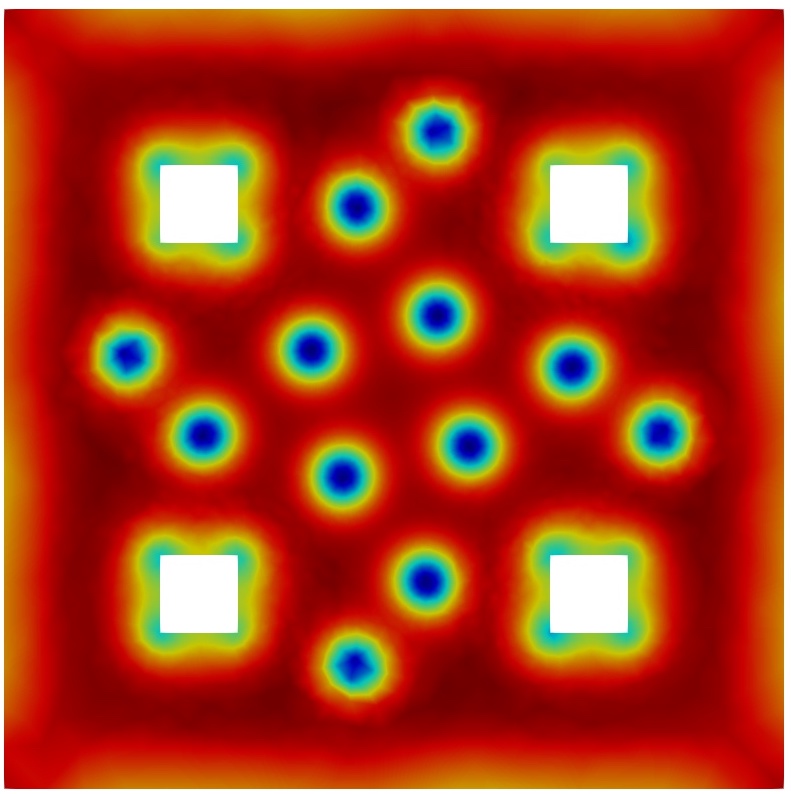} 
}

\subfloat[$\nabla\times \bA$  at $T=10$, $20$, $50$, $300$ and $500$  ]{%
\centering
	\includegraphics[width = 1.2in]{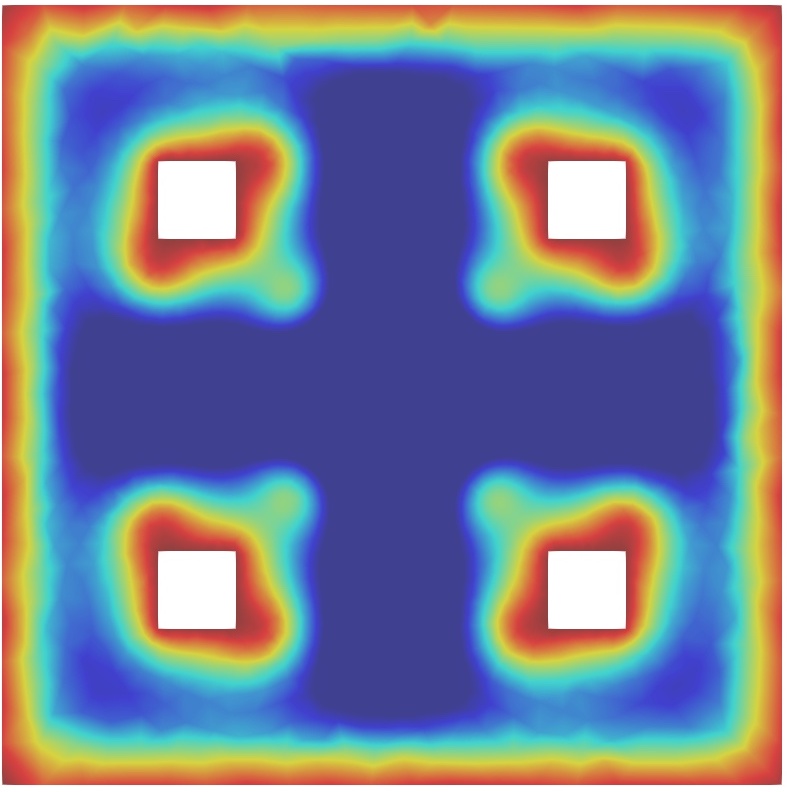}
	\includegraphics[width = 1.2in]{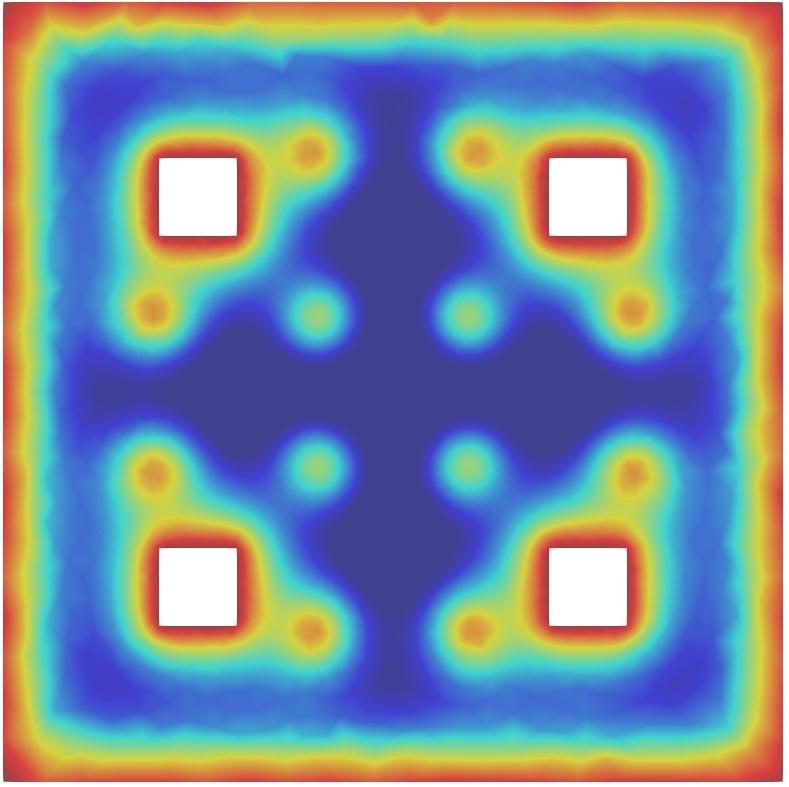}
	\includegraphics[width = 1.2in]{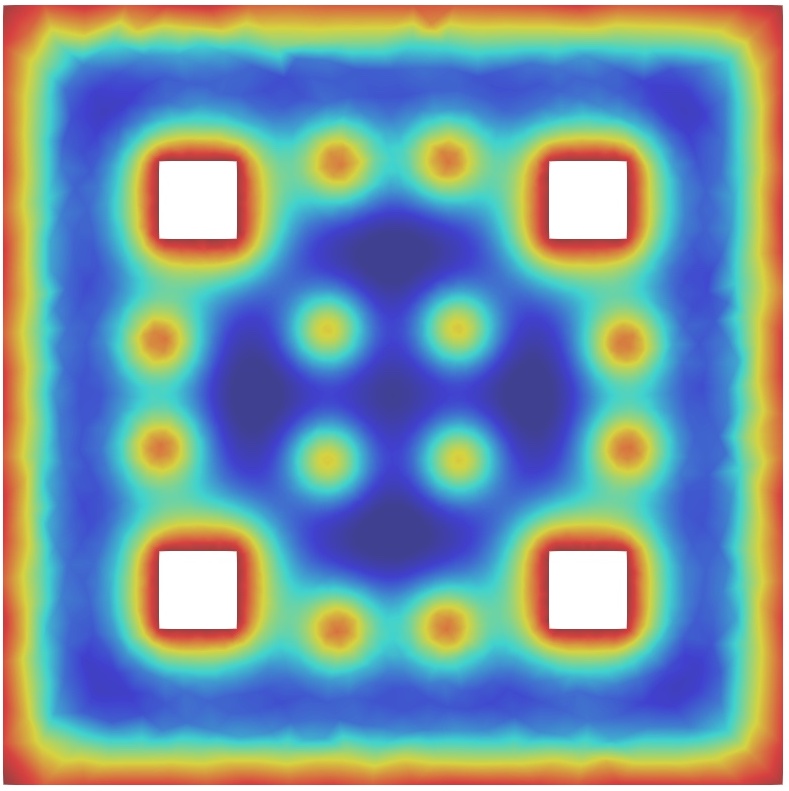}
	\includegraphics[width = 1.2in]{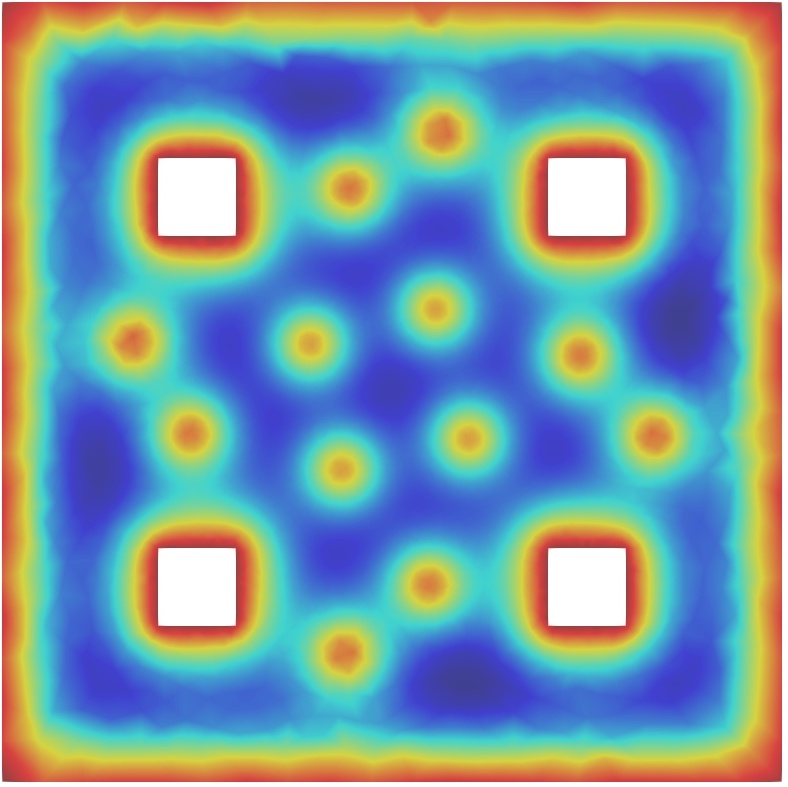} 
	\includegraphics[width = 1.2in]{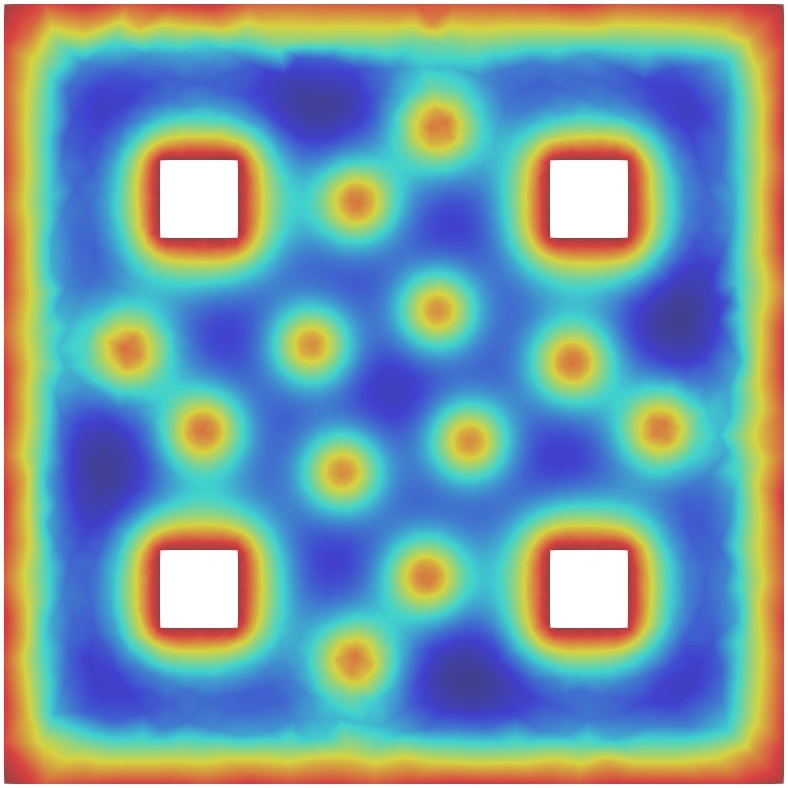} 
}

\caption{\footnotesize $|\psi|^2$ and $\nabla\times \bA$ on the squared domain with four holes for $\bH=0.8$.}
\label{fig:hole08}
\end{figure} 

The simulation for $\bH=0.8$ is conducted on a quasi-uniform mesh with 
8144 elements and the time step $\triangle t=0.02$. Figure~\ref{fig:hole08} plots the value of $|\psi|^2$ and $\nabla\times \bA$ at time $T=10$, $20$, $50$, $300$ and $500$, where the simulation until $T=2000$ shows that the vortex pattern stays unchanged after $T=500$. It shows that the vortices start to penetrate the material near the four square holes. 
Figure~\ref{fig:hole11} plots the simulation for $\bH=1.1$ on a quasi-uniform mesh with 305550 elements with $\triangle t=0.02$.
It clearly shows that more vortices are generated and earlier stationary state as the applied magnetic field $\bH$ increases.

\begin{figure}[!ht]
\centering
\subfloat[$|\psi|^2$  at $T=10$, $20$, $50$, $100$ and  $500$ ]{%
\centering
	\includegraphics[width = 1.2in]{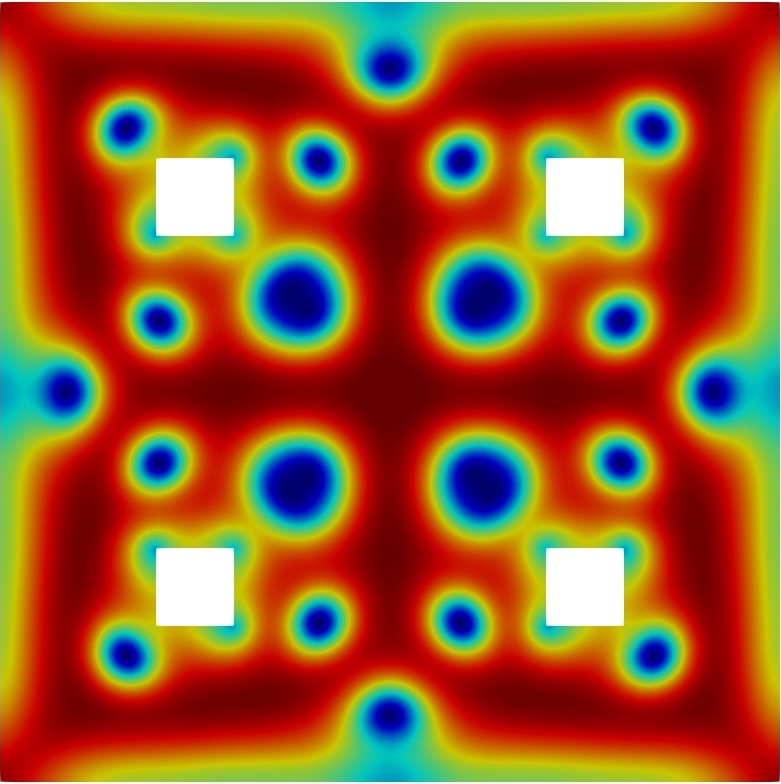}
	\includegraphics[width = 1.2in]{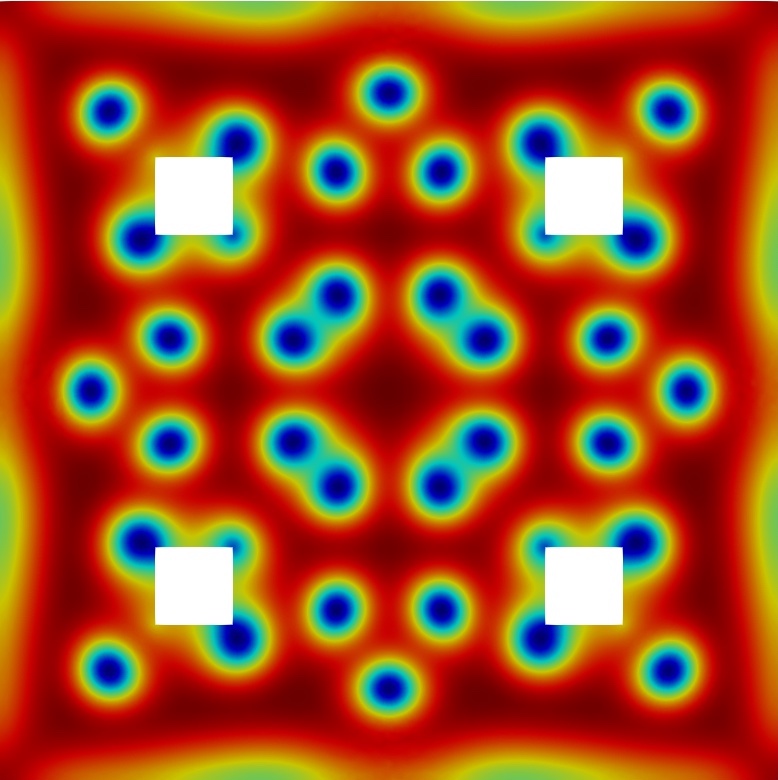}
	\includegraphics[width = 1.2in]{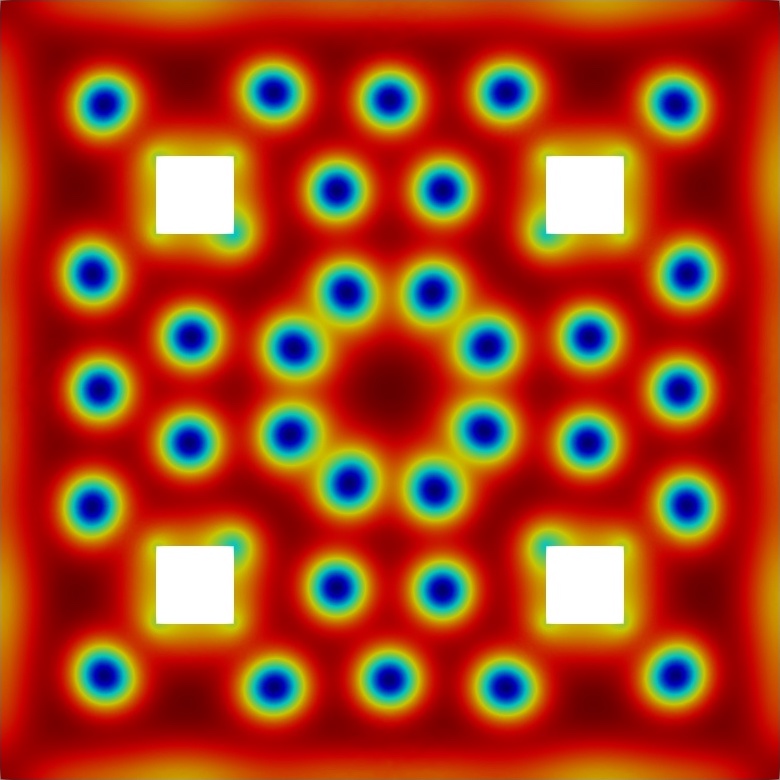}
	\includegraphics[width = 1.2in]{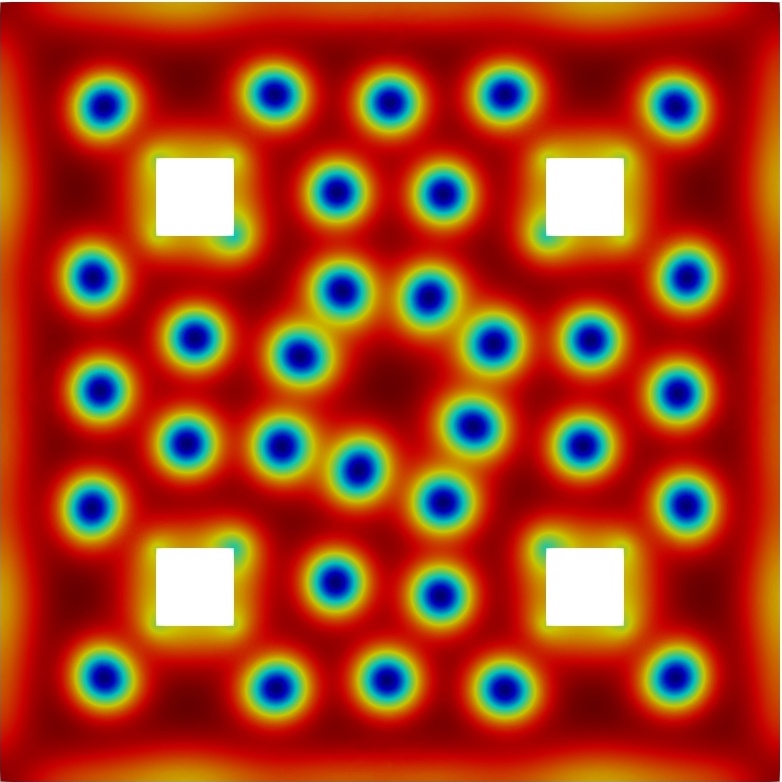} 
	\includegraphics[width = 1.2in]{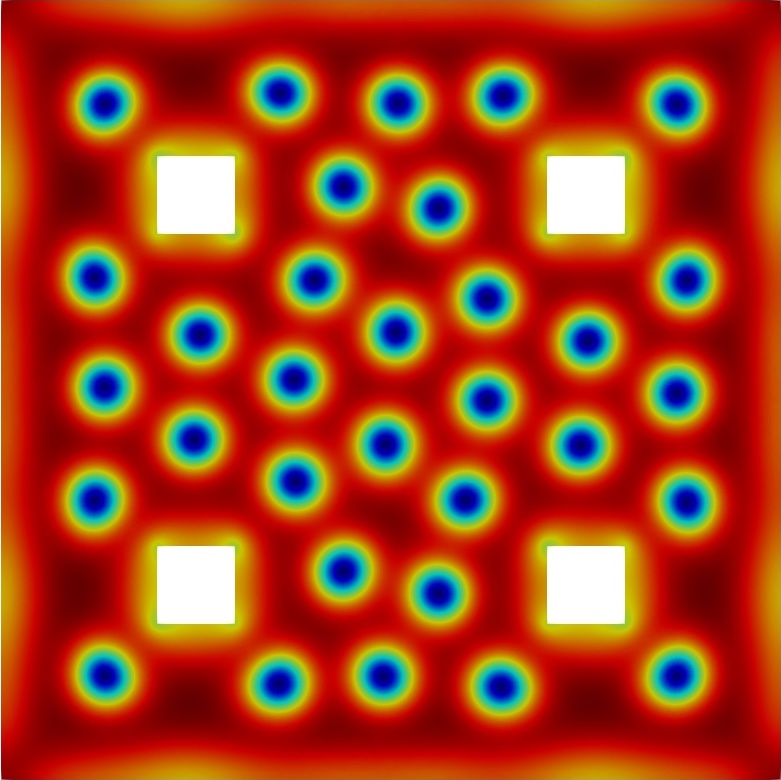} 
}

\subfloat[$\nabla\times \bA$  at $T=10$, $20$, $50$, $100$ and  $500$ ]{%
\centering
	\includegraphics[width = 1.2in]{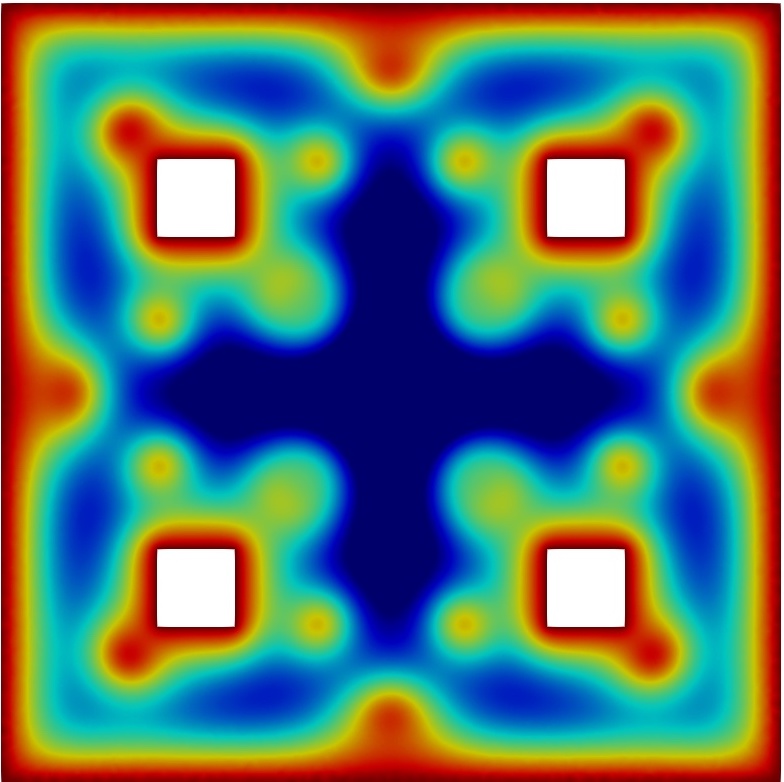}
	\includegraphics[width = 1.2in]{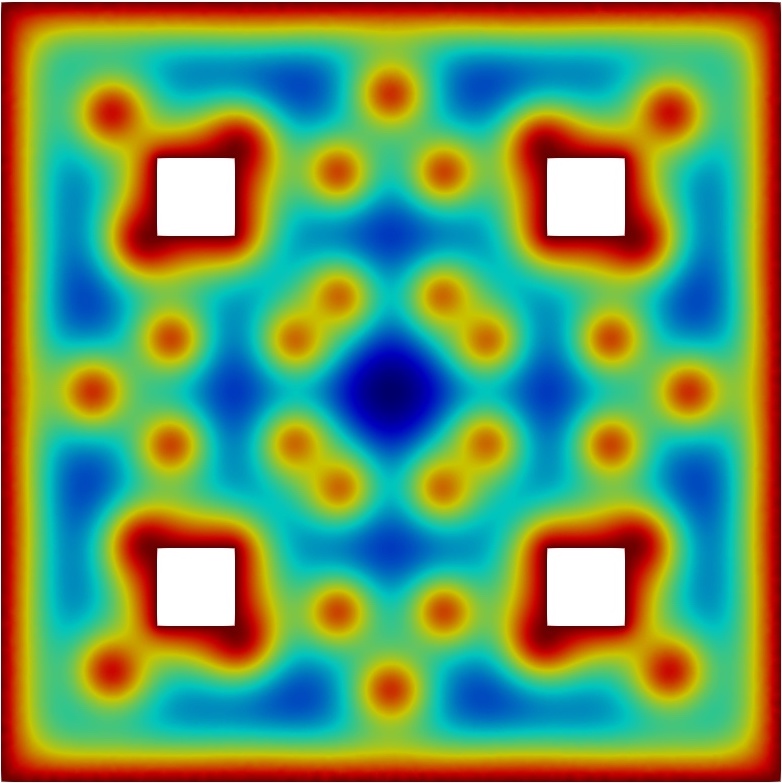}
	\includegraphics[width = 1.2in]{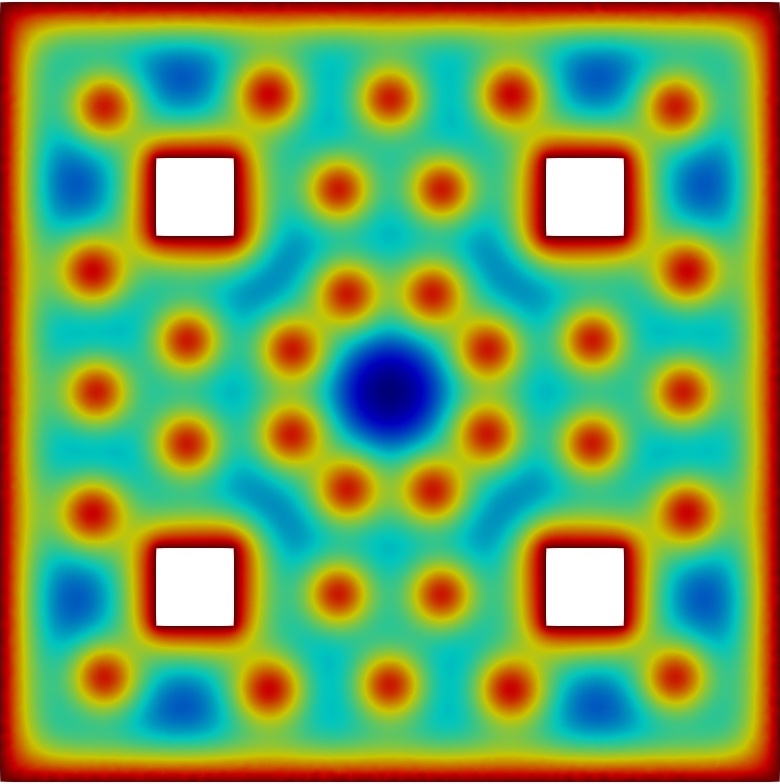}
	\includegraphics[width = 1.2in]{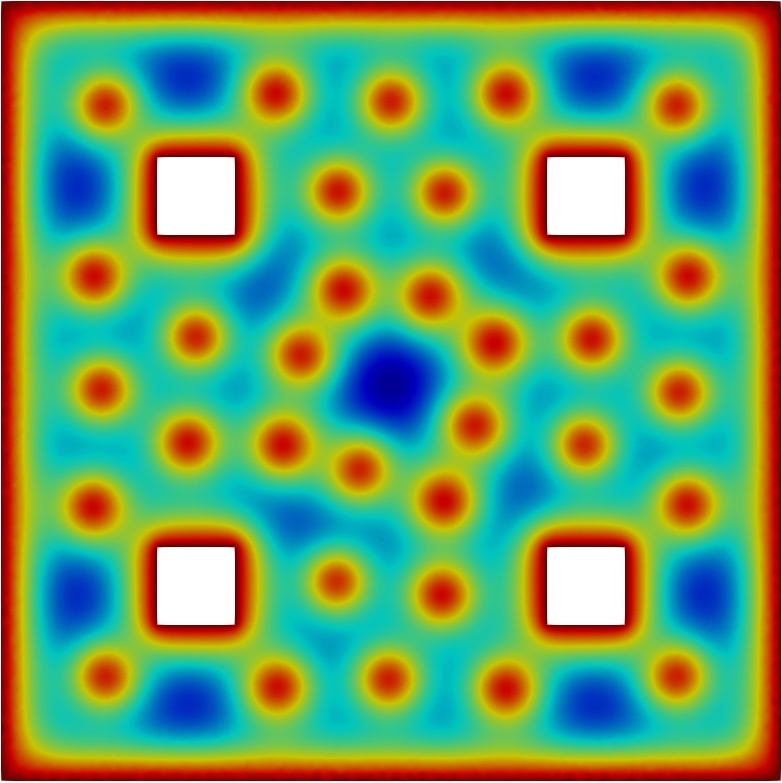} 
	\includegraphics[width = 1.2in]{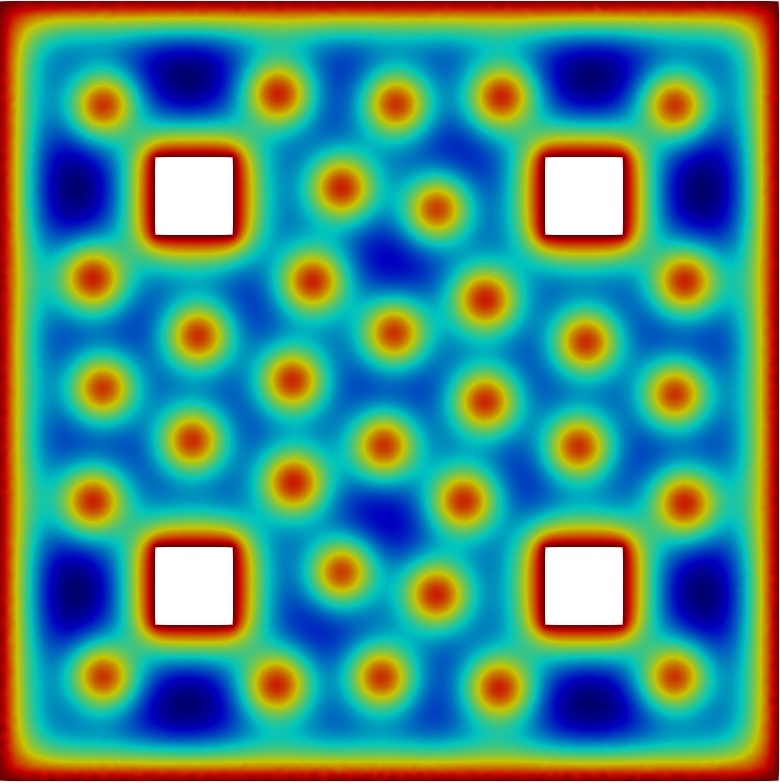} 
}

\caption{\footnotesize $|\psi|^2$ and $\nabla\times \bA$ on the squared domain with four holes for $\bH=1.1$.}
\label{fig:hole11}
\end{figure}

\section{Conclusions}
A new nonlinear finite element approach is proposed for solving the time dependent Ginzburg-Landau equations  under the temporal gauge with the original boundary condition. This numerical scheme solves the magnetic  potential by the lowest order of the  second kind ${\rm N\acute{e}d\acute{e}lec}$ element. This offers the advantage to deal with the original boundary condition of the physical problem directly, instead of requiring some additional boundary conditions to guarantee the wellposedness of the discrete system. The conventional finite element scheme solves the magnetic potential in a relatively smaller space with higher regularity. Compared to this conventional method, the proposed approach is more stable and reliable when dealing the superconductor with reentrant corners as showed in the numerical tests. The wellposedness and energy stable property of the nonlinear scheme is analyzed under some condition. 
The Newton method is applied to solve the proposed nonlinear system, and two efficient preconditioners are designed to speed up the simulations. The boundedness and the coercivity of the bilinear forms with respect to the proposed preconditioners are analyzed under some conditions. This motivates the design of the preconditioners. This efficient preconditioner plays an  important role in speeding up the simulation and makes the computational cost of this nonlinear system comparable to that of a linear system.
The comparison in numerical simulations verifies the efficiency of the proposed preconditioner.

\bibliographystyle{plain}
\bibliography{bibsuperConduct}
\end{document}